\theoremstyle{definition}
\newtheorem*{thm*}{Theorem}
\newtheorem{thm}{Theorem}[section]
\newtheorem{cor}[thm]{Corollary}
\newtheorem{lem}[thm]{Lemma}
\newtheorem{prop}[thm]{Proposition}
\newtheorem{rem}[thm]{Remark}
\numberwithin{equation}{section}
\def\refn#1.#2{\expandafter\def\csname#1\endcsname{[#2]}}
\def\refnr#1.{\csname#1\endcsname}
\begin{document}

\baselineskip  1.2pc

\title[The  $L^p$-$L^q$ problems  of Bergman-type operators ]
{The  $L^p$-$L^q$ problems  of Bergman-type operators }
\author{Lijia Ding }
\address{School of Mathematical Sciences,
Peking University, 
Beijing, 100086,
P. R. China}
\address{School of Mathematical Sciences,
Fudan University, Shanghai, 200433, P. R. China}
\email{ljding@pku.edu.cn}

\author{Kai Wang }
\address{ School of Mathematical Sciences,
Fudan University, Shanghai, 200433, P. R. China}
\email{kwang@fudan.edu.cn}
 \subjclass[2010]{ Primary  47G10;  Secondary 47A30; 47B05}
\keywords{Bergman projection; Embedding theorem; Compact operator; Hardy-Littlewood-Sobolev theorem; Norm estimate}
\thanks{ The first author was partially supported by Fudan University Exchange Program (2018017). The second author was  partially supported by NSFC (11722102), the Alexander von Humboldt Foundation (1151823), Shanghai Pujiang Program (16PJ1400600).
}
\begin{abstract} Let $\mathbb{B}^d$ be the unit ball on the complex space $\mathbb{C}^d$ with normalized Lebesgue measure $dv.$ For $\alpha\in\mathbb{R},$  denote $k_\alpha(z,w)=\frac{1}{(1-\langle z,w\rangle)^\alpha},$  the Bergman-type integral operator $K_\alpha$ on $L^1(\mathbb{B}^d,dv)$ is defined by 
$$ K_\alpha f(z)=\int _{\mathbb{B}^d}k_\alpha(z,w)f(w)dv(w).$$ It is an important class of operators in the holomorphic function space theory over the unit ball. We also consider the integral operator $K_\alpha^+$ on $L^1(\mathbb{B}^d,dv)$ which is given by $$ K_\alpha^+ f(z)=\int _{\mathbb{B}^d}\vert k_\alpha(z,w)\vert f(w)dv(w).$$ In this paper, we completely  characterize the $L^p$-$L^q$ boundedness of $K_\alpha,K_\alpha^+$ and $L^p$-$L^q$ compactness of $K_\alpha.$  The results of boundedness are in fact the Hardy-Littlewood-Sobolev theorem but also prove the conjecture of  \cite{CF}  in the case of bounded domain $\mathbb{B}^d.$ Meanwhile,  a trace formula and some sharp norm estimates of $K_\alpha,K_\alpha^+$ are given.

\end{abstract}
\maketitle

\section{Introduction}
Let $\mathbb{B}^d$ be the unit ball on the complex space $\mathbb{C}^d$ with the normalized Lebesgue measure $dv.$ For $\alpha\in\mathbb{R},$ denote $\alpha$-order Bergman-type kernel function  $k_\alpha(z,w)$ on $\mathbb{B}^d\times\mathbb{B}^d$ by $$k_\alpha(z,w)=\frac{1}{(1-\langle z,w\rangle)^\alpha}.$$ 
 Clearly the $(d+1)$-order Bergman-type kernel function $k_{d+1}(z,w)$ is the standard Bergman kernel  on $\mathbb{B}^d.$    Denote  Bergman-type integral operator $K_\alpha$ on $L^1(\mathbb{B}^d,dv)$ by 
$$ K_\alpha f(z)=\int _{\mathbb{B}^d}k_\alpha(z,w)f(w)dv(w).$$
Such operators $ K_\alpha $ play an important role in complex analysis of several variables and operator theory; in particular, when $\alpha=d+1,$ $K_{d+1}$ is the standard Bergman projection over the unit ball $\mathbb{B}^d.$
Indeed, for any $\alpha>0,$ if restrict $K_\alpha$ to the holomorphic function space $H(\mathbb{B}^d),$ then every $K_\alpha $ is a spacial form of fractional radial differential operator $R^{s,t},$ which is a kind of very useful operators in the Bergman space theory on the unit ball, see Lemma \ref{kr}; many key results on Bergman spaces can be deduced from the fractional radial differential operators, see example for \cite{ZZ,Zhu}. On the other hand, the operators  $K_\alpha$ play a significant role in the characterization of weighted Bloch spaces and Lipschitz spaces over the  unit ball $\mathbb{B}^d,$ see  \cite{ZZ,Zhu,Zhu1}.
We also consider the kernel integral operator $K_\alpha^+$ on $L^1(\mathbb{B}^d,dv),$  which is given by 
$$ K_\alpha^+ f(z)=\int _{\mathbb{B}^d}\frac{f(w)}{|1-\langle z,w \rangle|^\alpha}dv(w).$$
The operators $ K_\alpha^+$ can be regarded as Riesz potential  operators over the bounded domain $\mathbb{B}^d.$ Comparing to the classical Riesz potential  operators  over real Euclidian space $\mathbb{R}^d,$ whose  basic result
concerning mapping properties  is the Hardy-Littlewood-Sobolev theorem, see \cite{Li,NR,Pl,SW} and references therein. For convenience, we write $L^p(\mathbb{B}^d,dv)$ in the simple form $L^p(\mathbb{B}^d)$ or  $L^p$  for any $1\leq p\leq\infty$ without confusion arises. In the present paper,
we mainly concern the $L^p$-$L^q$ problem for $K_\alpha$ and $K_\alpha^+,$ namely we consider the boundedness and compactness of $K_\alpha$ and $K_\alpha^+,$ 
$$K_\alpha,K_\alpha^+:L^p\rightarrow L^q,$$ for $1\leq p,q\leq \infty.$ Indeed, the results of $L^p$-$L^q$ boundedness   are  the Hardy-Littlewood-Sobolev theorem with respect to  $K_\alpha^+$ over the unit ball $\mathbb{B}^d.$

 Actually, on  more general bounded domain $\Omega$ with the normalized Lebesgue measure $dv$ in $\mathbb{C}^d,$ the $L^p$-$L^q$ boundedness for  Bergman-type operators and in particular  $L^p$-$L^p$  boundedness for the   standard Bergman projection had  attracted much interest in the past decades; the target spaces are even  Bloch spaces, Lipschitz spaces and Sobolev spaces \cite{KV,PS}. As we all know, it is trivial that  the standard Bergman projection $P$ is bounded for any bounded domain when $p=q=2.$ However, the problem becomes very complicated for general $1\leq p,q\leq \infty.$ Nevertheless, the known results show that   depends strongly on the property of  the domain $\Omega.$  When $\Omega$ is a strongly pseudoconvex domain with  sufficiently smooth boundary, then the standard  Bergman projection $P: L^p(\Omega) \rightarrow L^p(\Omega)$ is bounded for any $1<p<\infty,$ the conclusion is also true for the Bergman-type integral operators with the order of the kernel function no more than $d+1;$ we refer the reader to  \cite{FR,MS,PS} along this line. Indeed, in the case of unit ball, the boundedness of more general Bergman type integral operators were considered in \cite{FR,Zha,ZZ,Zhu,Zhu1}. However, if  $\Omega$ is a  bounded symmetric domain of tube type  with rank $\geq 2$, the boundedness of the standard  Bergman projection $P: L^p(\Omega) \rightarrow L^p(\Omega)$ is conjectured by M. Stein that $P: L^p(\Omega) \rightarrow L^p(\Omega)$ is only bounded when $p$ belongs to a finite interval around $p = 2;$  we refer the reader to \cite{BB,BGN} and references therein. Although the $L^p$-$L^q$  boundedness for standard  Bergman projection $P$ over  tube type domains with rank $\geq 2$ has been considered a long time,  it is still an open problem.

Now return to our unit ball setting.  In \cite{FW}, X. Fang and Z. Wang established a relation between the boundedness of  standard  Bergman projection and Berezin transform on the weighted Bergman spaces over the unit disc  $\mathbb{D}=\mathbb{B}^1.$   The compactness of standard  Bergman projection $K_{2}:L^\infty(\mathbb{D}) \rightarrow L^q(\mathbb{D})$ for $1\leq q<\infty$ was observed by K. Zhu in Section 3.6 of \cite{Zhu1}. Recently,  X. Fang and  G. Cheng et al \cite{CF} completely solved the  $L^p$-$L^q$  boundedness problem of $K_\alpha$ over the unit disc $\mathbb{D};$  they also considered the   $L^p$-$L^q$  boundedness of Bergman-type operator over the upper half plane $\mathbb{U}=\{z\in \mathbb{C}: \text{Im} (z)>0\}.$ Not long afterward,  G. Cheng et al \cite{CH} solved  the  $L^p$-$L^q$  boundedness problem of $K_\alpha$ in the spacial case $\alpha=1$ over the unit ball $\mathbb{B}^d$ for general $d\geq1.$ The main difficulty in the case of high dimensional  ball $\mathbb{B}^d$  is how to determine  the critical exponents $d+1$ and $d+2,$ see the following theorems. In the present paper we completely describe the $L^p$-$L^q$ boundedness of $K_\alpha,K_\alpha^+$ but also the $L^p$-$L^q$ compactness of $K_\alpha$ over the unit ball $\mathbb{B}^d(d\geq1).$ The results of boundedness for $K_\alpha$ completely  prove the  conjecture of  \cite{CF} but also  extend some classical results \cite{DH,Mc,PS,Zha,Zhu,Zhu1} in the case unit ball, the results of boundedness for $K_\alpha^+$ are essentially the Hardy-Littlewood theorem as mentioned before; however the results of compactness are almost entirely new.  Firstly, it is trivial  that $ K_\alpha,K_\alpha^+:L^p\rightarrow L^q$ are compact  for any $1\leq p,q\leq \infty$  when $\alpha\leq0.$ Thus we  only  concern the case $\alpha>0.$ The following five theorems are our main results.

{\noindent\bf Theorem 1\label{thm1}.} If $ d+1<\alpha<d+2,$ then the following conditions are equivalent:
\begin{enumerate}
\item $K_\alpha:L^p\rightarrow L^q$ is bounded;
\item $K_{\alpha}^+:L^p\rightarrow L^q$ is bounded;
\item $K_\alpha:L^p\rightarrow L^q$ is compact;  
\item $p,q$ satisfy one of the following inequalities:\begin{enumerate} 
\item[(a)]  $\frac{1}{d+2-\alpha}<p<\infty, \frac{1}{q}>\frac{1}{p}+\alpha-(d+1);$
\item [(b)] $p=\infty, q<\frac{1}{\alpha-(d+1)}.$
\end{enumerate}
\end{enumerate}

As an consequence of Theorem 1, the following Hardy-Littlewood-Sobolev inequality (HLS) is established over the bounded domain $\mathbb{B}^d.$

{\noindent\bf HLS 1\label{HLS1}.}  For any $1<p,s<\infty, \frac{1}{s}+\frac{1}{p}+\alpha< d+2$ and $d+1<\alpha< d+2,$ then there exists  a constant $C$ which  depends only on $p,\alpha,d,s$ such that
\begin{equation}\label{hls}\left \vert\int_{\mathbb{B}^d}\int_{\mathbb{B}^d}\frac{f(w)g(z)}{|1-\langle z,w \rangle|^\alpha}dv(w)dv(z)\right\vert\leq C\Vert f\Vert_{L^p}\Vert g\Vert_{L^s},\end{equation}
for all $f\in L^p(\mathbb{B}^d),g\in L^s(\mathbb{B}^d).$

{\noindent\bf Theorem 2\label{th}.} If $ 0 <\alpha\leq d+1,$ then the following conditions are equivalent:
\begin{enumerate}
\item $K_\alpha:L^p\rightarrow L^q$ is bounded;  
\item $K_\alpha^+:L^p\rightarrow L^q$ is bounded;
\item $p,q$ satisfy one of the following inequalities:\begin{enumerate}
\item[(a)] $ p=1, q<\frac{d+1}{\alpha};$
\item[(b)] $1<p<\frac{d+1}{d+1-\alpha}, \frac{1}{q}\geq \frac{1}{p}+ \frac{\alpha}{d+1}-1;$ 
\item[(c)] $p=\frac{d+1}{d+1-\alpha}, q < \infty;$
\item[(d)] $\frac{d+1}{d+1-\alpha}<p\leq\infty.$
\end{enumerate}
\end{enumerate}

In particular, $K_\alpha,K_\alpha^+:L^p\rightarrow L^p$ are both bounded for any $1\leq p\leq \infty$ when $ 0 <\alpha< d+1,$ which is actually  a more precise conclusion than Lemma 5 of  \cite{PS} in the case of unit ball. Although $K_\alpha,K_\alpha^+:L^1\rightarrow L^\frac{d+1}{\alpha}$ are both unbounded under the condition of Theorem 2, it turns out  that $K_\alpha$ is weak type $(1,\frac{d+1}{\alpha}),$ i.e. $K_\alpha,K_\alpha^+: L^1\rightarrow L^{\frac{d+1}{\alpha},\infty}$ are both bounded over $\mathbb{B}^d,$ see the following Corollary \ref{wea}, which is a generalization of the result that the standard  Bergman projection is weak type (1,1) over some bounded domains \cite{DH,Mc}. More importantly, by Theorem 2, it implies the following the Hardy-Littlewood-Sobolev inequality over the unit ball $\mathbb{B}^d.$ 

{\noindent\bf HLS 2\label{HLS2}.}
For any $1<p,s<\infty, \frac{1}{s}+\frac{1}{p}+\frac{\alpha}{d+1}\leq2$ and $\alpha\leq d+1,$ then there exists  a constant $C$ that  depends only on $p,\alpha,d,s$ satisfying that (\ref{hls}) holds 
for all $f\in L^p(\mathbb{B}^d),g\in L^s(\mathbb{B}^d).$ 

Comparing HLS 1 and HLS 2 to the classical  Hardy-Littlewood-Sobolev inequality \cite{Li,NR,Pl,SW} over $\mathbb{R}^d,$ it is surprising  that HLS 1 is a new type of  Hardy-Littlewood-Sobolev inequality.

{\noindent\bf Theorem 3\label{th3}.} If $ 0 <\alpha\leq d+1,$ then the following conditions are equivalent:
\begin{enumerate}
\item $K_\alpha:L^p\rightarrow L^q$ is compact;  
\item $p,q$ satisfy one of the following inequalities:\begin{enumerate}
\item[(a)] $ p=1, q<\frac{d+1}{\alpha};$
\item[(b)] $1<p<\frac{d+1}{d+1-\alpha}, \frac{1}{q}> \frac{1}{p}+ \frac{\alpha}{d+1}-1;$ 
\item[(c)] $p=\frac{d+1}{d+1-\alpha}, q < \infty;$
\item[(d)] $\frac{d+1}{d+1-\alpha}<p\leq\infty.$
\end{enumerate}
\end{enumerate}

{\noindent\bf Theorem 4.} For  $\alpha\in\mathbb{R},$ then the following conditions are equivalent:
\begin{enumerate}
\item $\alpha< d+2;$
\item there exist $1\leq p,q\leq \infty $ such that $K_\alpha:L^p\rightarrow L^q$ is bounded;
\item there exist $1\leq p,q\leq \infty $ such that $K_\alpha^+:L^p\rightarrow L^q$ is bounded;
\item there exist $1\leq p,q\leq \infty $ such that $K_\alpha:L^p\rightarrow L^q$ is compact.
\end{enumerate}
 
{\noindent\bf Theorem 5\label{thm4}.} If $\alpha<\frac{d+2}{2},$ then the following holds.
\begin{enumerate}
\item  $K_\alpha,K_\alpha^+:L^2\rightarrow L^2$ are Hilbert-Schmidt.
\item Moreover, if $d=1$ and $0<\alpha<\frac{3}{2},$ then we have the trace formula, $$Tr(K_\alpha^*K_\alpha)=\Vert K_{2\alpha}^+\Vert_{L^\infty\rightarrow L^1}=\frac{1}{(\alpha-1)^2}\left (\frac{\Gamma(3-2\alpha)}{\Gamma^2(2-\alpha)} -1\right ).$$ where $\Gamma$  is the usual Gamma function.  When $\alpha=1,$ the  quantity on the right side   should be interpreted as $\frac{\pi^2}{6}.$
\end{enumerate}

The above theorems show that $K_\alpha:L^p\rightarrow L^q$ is bounded if and only if $K_\alpha^+:L^p\rightarrow L^q$ is bounded. From Theorem 1, it is  amazing to know that, when  $d+1<\alpha<d+2,$  $K_\alpha:L^p\rightarrow L^q$ is compact if and only if $K_\alpha:L^p\rightarrow L^q$ is bounded. However,  it is very different when $0<\alpha\leq d+1$ by  Theorems 2 and 3. In particular,    the  standard  Bergman projection $K_{d+1}:L^p\rightarrow L^q$ is compact if and only if  $1\leq q<p\leq\infty$ over $\mathbb{B}^{d}.$

 Let us consider the above boundedness problem in the following viewpoint. Denote $G(K_\alpha)$ by the set of $(\frac{1}{p},\frac{1}{q})\in E$ such that $K_\alpha: L^p\rightarrow L^q$ is bounded, where $E$ is given by $$ E=\{(x,y)\in \mathbb{R}^2:  0\leq x,y\leq 1 \},$$ i.e. $E$ is a unit square in the real  plane $\mathbb{R}^2.$ Following by T. Tao \cite{Tao},  $G(K_\alpha)$ is called the type diagram of the operator $K_\alpha,$ see Figure \ref{fi}. By a classical  interpolation result, it implies immediately that every $G(K_\alpha)$ is convex. The adjointness of $K_\alpha$ implies that   $G(K_\alpha)$ is axisymmetric on the inside of $E$. To prove the above theorems is equivalent to solve the  corresponding type diagrams. The above theorems show that the type diagram  $G(K_\alpha)$ is determined by the corresponding inequalities. Conversely, the  inequalities in the above theorem are determined by the type diagram  $G(K_\alpha).$ The convexity and axisymmetry of the type diagram will make the solving process simpler.  Similarly, we can define the type diagrams $G(K_{\alpha}^+)$ for operators $K_{\alpha}^+,$ which are also convex and is axisymmetric on the inside of $E$; see Figure \ref{fi}. Note that $|K_\alpha(f)|\leq K_\alpha^+(|f|),$ it  implies  immediately  that $G(K_{\alpha}^+)\subset G(K_{\alpha}).$  Then combing with several embedding theorems  of holomorphic function spaces and some estimations of Bergman kernel over the unit ball, we completely  characterize $L^p$-$L^q$ boundedness and  $L^p$-$L^q$ compactness of $K_\alpha.$ The above main theorems show in fact that $G(K_{\alpha}^+)= G(K_{\alpha})$ for every $\alpha\in \mathbb{R}.$ After characterizing the boundedness and compactness of $K_\alpha,$ by using of the hypergeometric function theory and  the interpolation theory, we give some sharp norm estimations of $K_\alpha,K_\alpha^+.$ It is in fact that we estimate the upper bounds of the best constant in the inequalities HLS 1 and HLS 2.

  The results of this paper can be generalized to cover the weighted  Lesbegue integrable spaces and more general kernel operators over the unit ball.  Another promising idea is the study of the boundedness of Bergman projection over the  bounded symmetric domains of tube type \cite{BGN}  with rank $\geq 2.$
  
 The paper is organized as follows. In Section 2, we give some basic properties of the operators $K_\alpha.$ In Section 3, we prove  Theorem 1.  The proof of Theorem 2 is given in Section 4. In Section 5, we prove Theorem 3 and Theorem 4. Finally, we give some sharp norm estimations of the operators $K_\alpha,K_\alpha^+.$
 \begin{figure}\label{fi}
  \centering
  \includegraphics[width=.8\textwidth]{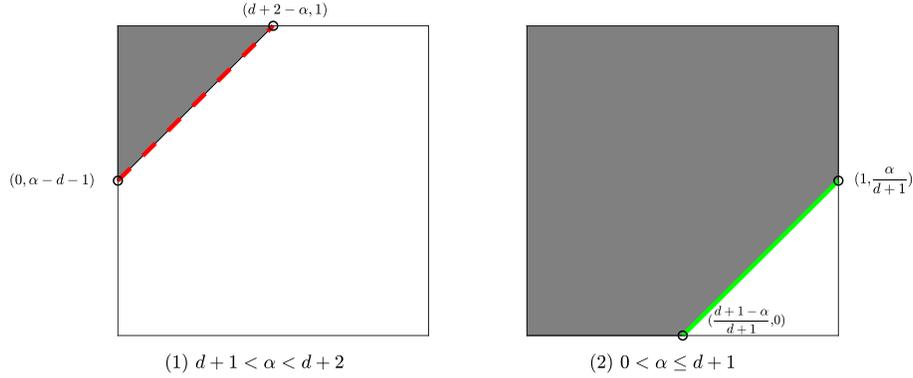} 
  \caption{Type diagrams $G(K_\alpha),G(K_\alpha^+).$}
  \label{img} \end{figure}

\section{Basic properties of $K_\alpha$}
In this section, we prove some results for latter use. We first take a rough look at the property of type diagram $G(K_\alpha)$ of the operator $K_\alpha.$ We prove that every $G(K_\alpha)$ is convex and  is axisymmetric on the inside of $E$ as mentioned before.  Let $l_E$ be the diagonal line of the  square $E$ which connects  points $(0,1)$ and $(1,0).$ Clearly $G(K_\alpha)\subset E$ for any $\alpha\in \mathbb{R}.$
\begin{prop}\label{cs}\begin{enumerate}
\item If $G(K_\alpha)\neq \emptyset,$ then $(0,1)\in G(K_\alpha)$; if $(1,0)\in G(K_\alpha),$ then $ G(K_\alpha)=E.$
\item For any $\alpha\in \mathbb{R},$ the type diagram $G(K_\alpha)$ is  convex and  is axisymmetric about $l_E$ on the inside of $E.$ \end{enumerate}\end{prop}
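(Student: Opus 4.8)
The plan is to establish the two parts of Proposition~\ref{cs} separately, using duality for the axisymmetry and a pointwise-domination/interpolation argument for convexity.

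\medskip

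\noindent\textbf{Part (1).} First I would observe that the constant function $\mathbf{1}$ is an explicit test function that is easy to evaluate under $K_\alpha$. Note that $K_\alpha \mathbf{1}(z)=\int_{\mathbb{B}^d}(1-\langle z,w\rangle)^{-\alpha}\,dv(w)$ is a holomorphic function of $z$, and by a standard integral estimate (of the type recorded in the Bergman-kernel lemmas referenced earlier) it is bounded on $\mathbb{B}^d$ whenever the relevant exponent is subcritical; in any case $\mathbf{1}\in L^p$ for every $p$ since $dv$ is a finite (normalized) measure. If $G(K_\alpha)\neq\emptyset$, pick any $(\tfrac1p,\tfrac1q)\in G(K_\alpha)$; then $K_\alpha$ maps $L^p\to L^q$, and since $\mathbf{1}\in L^\infty\subset L^p$ with $K_\alpha\mathbf{1}\in L^q$, I would use the finiteness of $dv$ together with monotonicity of $L^r$-norms on a finite measure space to climb up to the endpoint, concluding that $K_\alpha:L^\infty\to L^q$ is bounded for the smallest admissible $q$, hence $(0,1)\in G(K_\alpha)$ after checking the boundedness persists at $\tfrac1p=0,\tfrac1q=1$. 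For the second implication, if $(1,0)\in G(K_\alpha)$ then $K_\alpha:L^1\to L^\infty$ is bounded; this is the strongest possible mapping, and I would show it forces the kernel to be essentially bounded, so that $K_\alpha$ maps $L^p\to L^q$ for \emph{every} pair, giving $G(K_\alpha)=E$. Concretely, boundedness $L^1\to L^\infty$ is equivalent to $\operatorname{ess\,sup}_{z,w}|k_\alpha(z,w)|<\infty$, and a bounded kernel on a finite measure space yields boundedness between all $L^p$ and $L^q$ by Hölder.

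\medskip

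\noindent\textbf{Part (2), convexity.} The convexity is the routine half: since $K_\alpha$ is a fixed linear integral operator, the set of exponents for which it is $L^p\to L^q$ bounded is convex by the Riesz--Thorin interpolation theorem. Precisely, if $K_\alpha$ is bounded $L^{p_0}\to L^{q_0}$ and $L^{p_1}\to L^{q_1}$, then it is bounded $L^{p_\theta}\to L^{q_\theta}$ for the interpolated exponents $\tfrac1{p_\theta}=\tfrac{1-\theta}{p_0}+\tfrac{\theta}{p_1}$, $\tfrac1{q_\theta}=\tfrac{1-\theta}{q_0}+\tfrac{\theta}{q_1}$, which traces out exactly the line segment between the two points in $E$; hence $G(K_\alpha)$ is convex.

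\medskip

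\noindent\textbf{Part (2), axisymmetry.} For the symmetry about $l_E$ I would compute the formal adjoint of $K_\alpha$. Using the duality pairing $\langle K_\alpha f,g\rangle=\int\!\!\int k_\alpha(z,w)f(w)\overline{g(z)}\,dv(w)\,dv(z)$ and Fubini, the adjoint operator has kernel $\overline{k_\alpha(w,z)}=(1-\overline{\langle w,z\rangle})^{-\alpha}=(1-\langle z,w\rangle)^{-\alpha}=k_\alpha(z,w)$, since $\langle w,z\rangle=\overline{\langle z,w\rangle}$. Thus $K_\alpha$ has a kernel that is essentially self-transposed, and the adjoint $K_\alpha^*$ shares the same mapping properties with the roles of $p$ and $q$ dualized: $K_\alpha:L^p\to L^q$ is bounded if and only if $K_\alpha^*:L^{q'}\to L^{p'}$ is bounded, where $p',q'$ are the conjugate exponents. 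In type-diagram coordinates the map $(\tfrac1p,\tfrac1q)\mapsto(\tfrac1{q'},\tfrac1{p'})=(1-\tfrac1q,1-\tfrac1p)$ is precisely the reflection across the anti-diagonal $l_E$. Because $K_\alpha^*$ and $K_\alpha$ have the same boundedness region (both being Bergman-type operators with the same kernel, up to the interior subtlety that the kernel equals its own transpose), this reflection maps $G(K_\alpha)$ into itself, establishing axisymmetry about $l_E$ on the interior of $E$.

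\medskip

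\noindent The main obstacle I expect is the boundary bookkeeping in Part~(1): pinning down exactly why the implications climb to the endpoints $(0,1)$ and force $G(K_\alpha)=E$ requires care about which endpoint exponents are attained, since the monotonicity-of-norms argument on a finite measure space is clean in the interior but must be checked at $p=\infty$ and $q=\infty$ where duality and the ``inside of $E$'' qualifier in the axisymmetry statement both become delicate.
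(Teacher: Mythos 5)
Your proposal is correct and matches the paper's argument in all essentials: part (1) rests on the continuous embeddings $L^\infty\subset L^p$ and $L^q\subset L^1$ over the finite measure space $(\mathbb{B}^d,dv)$, convexity comes from Riesz--Thorin interpolation (the paper's Lemma \ref{int}), and the axisymmetry comes from the Hermitian symmetry $k_\alpha(z,w)=\overline{k_\alpha(w,z)}$ combined with $L^p$--$L^{p'}$ duality, exactly as in the paper. The only superfluous elements are the discussion of the test function $\mathbf{1}$ (which by itself proves nothing about operator norms) and the detour through essential boundedness of the kernel for the implication $(1,0)\in G(K_\alpha)\Rightarrow G(K_\alpha)=E$: the same nested-embedding argument $L^p\hookrightarrow L^1\xrightarrow{K_\alpha}L^\infty\hookrightarrow L^q$ that you use for $(0,1)$ already gives this directly.
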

\begin{proof} 
{\noindent (1)} It comes from the following continuous embedding of $L$-integrable spaces, i.e. $L^p\subset L^q$ whenever $p\geq q.$

{\noindent (2)} To show that $G(K_\alpha)$ is  convex, it suffices to show that if $(\frac{1}{p_1},\frac{1}{q_1}),(\frac{1}{p_2},\frac{1}{q_2}) \in G(K_\alpha),$ then $\theta(\frac{1}{p_1},\frac{1}{q_1})+(1-\theta)(\frac{1}{p_2},\frac{1}{q_2}) \in G(K_\alpha)$ for any $0\leq\theta\leq1.$ Indeed, it is a direct corollary of the following Lemma \ref{int}, a classical complex interpolation result.  Now we turn to the symmetry. By Fubini's theorem, it implies that $K_{\alpha}$ is adjoint. Then, for $1<p,q<\infty,$ the boundedness of $K_{\alpha}: L^p\rightarrow L^q$ is equivalent to  the boundedness of $K_{\alpha}: L^{q'}\rightarrow L^{p'},$ where $p',q'$ are the conjugate numbers of $p,q$, respectively.
    It means that $(\frac{1}{p},\frac{1}{q}) \in G(K_\alpha)$ if and only if $(\frac{1}{q'},\frac{1}{p'})\in G(K_\alpha).$ It easy to check that $(\frac{1}{p},\frac{1}{q}) $ and $(\frac{1}{q'},\frac{1}{p'})$ are  symmetric about $l_E$ by the conjugate relationship.  \end{proof}
    \begin{lem}\label{int}\cite{Zhu} Suppose $1\leq p_1,p_2,q_1,q_2\leq\infty.$  If a linear operator $T$ such that $T :L^{p_1}\rightarrow L^{q_1}$ is bounded with norm $M_1$ and $T :L^{p_2}\rightarrow L^{q_2}$ is bounded with norm $M_2.$ Then $T :L^{p}\rightarrow L^{q}$ is bounded with norm no more than $M_1^{\theta}M_{2}^{1-\theta},$  if   there exists  $\theta\in (0,1)$ such that  $$\frac{1}{p}=\frac{\theta}{p_1}+\frac{1-\theta}{p_2}, \frac{1}{q}=\frac{\theta}{q_1}+\frac{1-\theta}{q_2}.$$  
\end{lem}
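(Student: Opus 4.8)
The statement is the Riesz--Thorin interpolation theorem, and the plan is to prove it by the classical complex-analytic method built on Hadamard's three-lines lemma. First I would reduce the claim to a bilinear estimate. Writing $q'$ for the conjugate exponent of $q$, it suffices by duality to show that
$$\left|\int_{\mathbb{B}^d} (Tf)\,g\,dv\right|\le M_1^\theta M_2^{1-\theta}$$
for all simple functions $f,g$ normalized so that $\|f\|_{L^p}=\|g\|_{L^{q'}}=1$; once this holds for simple functions, their density in $L^p$ (and in $L^{q'}$) together with taking the supremum over $g$ yields $\|Tf\|_{L^q}\le M_1^\theta M_2^{1-\theta}$ and hence the asserted bound on the operator norm.

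Next I would analytically interpolate the exponents. For $s$ in the closed strip $0\le \Re s\le 1$ set
$$\frac{1}{p(s)}=\frac{s}{p_1}+\frac{1-s}{p_2},\qquad \frac{1}{q(s)}=\frac{s}{q_1}+\frac{1-s}{q_2},$$
so that $p(\theta)=p$ and $q(\theta)=q$. Writing the normalized simple functions as $f=\sum_j c_j e^{i\phi_j}\chi_{E_j}$ and $g=\sum_k d_k e^{i\psi_k}\chi_{F_k}$ with $c_j,d_k>0$ and the sets $E_j$ (resp. $F_k$) disjoint, I would define the analytic families
$$f_s=\sum_j c_j^{\,p/p(s)}\,e^{i\phi_j}\chi_{E_j},\qquad g_s=\sum_k d_k^{\,q'/q'(s)}\,e^{i\psi_k}\chi_{F_k},$$
where $1/q'(s)=1-1/q(s)$; these reduce to $f$ and $g$ at $s=\theta$. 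Then $F(s):=\int_{\mathbb{B}^d}(Tf_s)\,g_s\,dv$ is a finite linear combination of exponentials in $s$, hence an entire function bounded on the strip, so Hadamard's three-lines lemma applies.

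The core of the argument is the estimate on the two edges of the strip. On $\Re s=1$ a direct computation gives $\Re(p/p(s))=p/p_1$, whence $\|f_{1+it}\|_{L^{p_1}}=\|f\|_{L^p}^{p/p_1}=1$ and similarly $\|g_{1+it}\|_{L^{q_1'}}=1$; combining H\"older's inequality with the hypothesis $\|T\|_{L^{p_1}\to L^{q_1}}\le M_1$ yields $|F(1+it)|\le M_1$. The symmetric computation on $\Re s=0$ gives $|F(it)|\le M_2$. The three-lines lemma then forces $|F(\theta)|\le M_1^\theta M_2^{1-\theta}$, and since $F(\theta)=\int_{\mathbb{B}^d}(Tf)\,g\,dv$ this is exactly the bilinear estimate sought.

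I expect the only real subtlety to lie in the endpoint cases where one or more exponent equals $\infty$: for instance $p=\infty$ forces $p_1=p_2=\infty$, and one must then invoke uniform approximation of bounded measurable functions by simple functions rather than $L^p$-density, while $q=\infty$ makes the dual exponent $q'=1$ and must be handled directly in the limiting step. These cases require only routine modifications of the edge estimates and of the final approximation argument, so the weight of the proof rests on verifying analyticity and boundedness of $F$ on the strip and on the two edge bounds, after which Hadamard's theorem completes the argument.
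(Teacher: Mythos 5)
The paper offers no proof of this lemma: it is the Riesz--Thorin interpolation theorem, quoted with a citation to \cite{Zhu}. Your argument is the standard complex-interpolation proof of that theorem (duality reduction to a bilinear form on simple functions, the analytic family $f_s,g_s$ with affine exponent interpolation, edge bounds via H\"older and the two hypotheses, and Hadamard's three-lines lemma), and it is correct, including your identification of the endpoint cases $p=\infty$ and $q=\infty$ as the only points needing routine modification; this is essentially the same proof one finds in the cited reference.
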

\begin{rem} Proposition \ref{cs} shows that the type diagram $G(K_\alpha)$ is a bounded convex set in the plane $\mathbb{R}^2$, so to solve $G(K_\alpha),$ it suffices to find out all  extreme points  or the boundary points of $G(K_\alpha).$ The symmetry of $G(K_\alpha)$ shows that is only need to find out a half. On the other hand, Proposition \ref{cs} holds for more general domains and adjoint operators.
\end{rem}
\begin{cor}\label{cs1}\begin{enumerate}
\item If $G(K_\alpha^+)\neq \emptyset,$ then $(0,1)\in G(K_\alpha^+)$; if $(1,0)\in G(K_\alpha^+),$ then $ G(K_\alpha^+)=E.$
\item For any $\alpha\in \mathbb{R},$ the type diagram $G(K_\alpha^+)$ is  convex and  is axisymmetric about $l_E $ on the inside of $E.$ \end{enumerate}\end{cor}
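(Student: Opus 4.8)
The plan is to run the argument of Proposition \ref{cs} essentially verbatim, observing that $K_\alpha^+$ is, like $K_\alpha$, a linear integral operator on the finite-measure space $(\mathbb{B}^d,dv)$. Consequently the only three ingredients used there remain available without change: the continuous nesting $L^p\subset L^q$ of Lebesgue spaces whenever $p\geq q$, the complex interpolation Lemma \ref{int}, and self-adjointness via Fubini's theorem. So the whole task reduces to checking that these three facts apply to $K_\alpha^+$.

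For part (1) I would argue exactly as in Proposition \ref{cs}. If $G(K_\alpha^+)\neq\emptyset$, choose $(\frac{1}{p},\frac{1}{q})\in G(K_\alpha^+)$; then the embeddings $L^\infty\subset L^p$ and $L^q\subset L^1$, valid because $dv$ is finite, show that $K_\alpha^+:L^\infty\to L^1$ is bounded, i.e. $(0,1)\in G(K_\alpha^+)$. Similarly, if $(1,0)\in G(K_\alpha^+)$, then for an arbitrary $(\frac{1}{p},\frac{1}{q})\in E$ the embeddings $L^p\subset L^1$ and $L^\infty\subset L^q$ give boundedness of $K_\alpha^+:L^p\to L^q$, whence $G(K_\alpha^+)=E$.

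For part (2), convexity is immediate from Lemma \ref{int}, which applies to any linear operator and in particular to $K_\alpha^+$. For the axisymmetry about $l_E$ on the inside of $E$ I would reduce, as in Proposition \ref{cs}, to checking that $K_\alpha^+$ is self-adjoint, so that for $1<p,q<\infty$ the boundedness of $K_\alpha^+:L^p\to L^q$ is equivalent to that of $K_\alpha^+:L^{q'}\to L^{p'}$; the conjugate relationship then places $(\frac{1}{p},\frac{1}{q})$ and $(\frac{1}{q'},\frac{1}{p'})$ symmetrically about $l_E$. Self-adjointness follows once the kernel is seen to be symmetric: since $\langle w,z\rangle=\overline{\langle z,w\rangle}$, one has $|1-\langle w,z\rangle|=|\overline{1-\langle z,w\rangle}|=|1-\langle z,w\rangle|$, so the kernel $|1-\langle z,w\rangle|^{-\alpha}$ is unchanged under swapping $z$ and $w$, and Fubini's theorem then yields the adjoint relation.

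The only point not already present in Proposition \ref{cs} is this symmetry of the absolute-value kernel, and it is an elementary consequence of the conjugate-symmetry of the Hermitian inner product, so there is no genuine obstacle. Indeed the same computation shows $k_\alpha(w,z)=\overline{k_\alpha(z,w)}$, which is precisely what makes $K_\alpha$ itself self-adjoint in Proposition \ref{cs}. Thus the corollary is strictly parallel to the proposition, the passage from $K_\alpha$ to $K_\alpha^+$ costing nothing beyond this one-line verification.
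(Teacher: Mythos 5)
Your proposal is correct and matches the paper's intent exactly: the paper states this as an immediate corollary of Proposition \ref{cs}, relying on the same three ingredients (nesting of $L^p$ spaces on a finite measure space, Lemma \ref{int}, and kernel symmetry via Fubini), and you have correctly identified that the only point requiring verification is the symmetry $|1-\langle w,z\rangle|=|1-\langle z,w\rangle|$ of the absolute-value kernel. Nothing further is needed.
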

\begin{cor}\label{gke} If $\alpha\leq0,$ then $G(K_\alpha)=G(K_\alpha^+)=E.$  
\end{cor}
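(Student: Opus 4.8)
The plan is to reduce everything to one elementary observation: when $\alpha\leq 0$ the kernel $k_\alpha$ is \emph{bounded} on $\mathbb{B}^d\times\mathbb{B}^d$, which immediately places the corner $(1,0)$ of the square $E$ in both type diagrams; Proposition \ref{cs}(1) and its analogue Corollary \ref{cs1}(1) then propagate this single boundedness statement to all of $E$.

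First I would bound $|1-\langle z,w\rangle|$ from above for $z,w\in\mathbb{B}^d$. By the Cauchy--Schwarz inequality $|\langle z,w\rangle|\leq|z|\,|w|<1$, so writing $\langle z,w\rangle=re^{i\theta}$ with $0\leq r<1$ gives $|1-\langle z,w\rangle|^2=1-2r\cos\theta+r^2\leq(1+r)^2<4$, hence $|1-\langle z,w\rangle|<2$ throughout $\mathbb{B}^d\times\mathbb{B}^d$. Since $\alpha\leq 0$ means $-\alpha\geq 0$, this yields the uniform bound
\begin{equation*}
|k_\alpha(z,w)|=|1-\langle z,w\rangle|^{-\alpha}\leq 2^{-\alpha}=:M<\infty .
\end{equation*}
Using this, for any $f\in L^1(\mathbb{B}^d)$ and almost every $z$ one has
\begin{equation*}
|K_\alpha f(z)|\leq K_\alpha^+(|f|)(z)=\int_{\mathbb{B}^d}|k_\alpha(z,w)|\,|f(w)|\,dv(w)\leq M\,\Vert f\Vert_{L^1},
\end{equation*}
so that $\Vert K_\alpha^+ f\Vert_{L^\infty}\leq M\Vert f\Vert_{L^1}$ and likewise $\Vert K_\alpha f\Vert_{L^\infty}\leq M\Vert f\Vert_{L^1}$. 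Thus $K_\alpha,K_\alpha^+:L^1\to L^\infty$ are bounded, i.e. $(1,0)\in G(K_\alpha)$ and $(1,0)\in G(K_\alpha^+)$.

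Finally, Proposition \ref{cs}(1) then forces $G(K_\alpha)=E$, and Corollary \ref{cs1}(1) forces $G(K_\alpha^+)=E$, which completes the argument. There is essentially no obstacle here: the only point requiring care is the geometric estimate $|1-\langle z,w\rangle|<2$, after which the finiteness of the normalized measure $v$ (which underlies the embeddings $L^p\subset L^q$ for $p\geq q$ used in Proposition \ref{cs}(1)) upgrades the single $L^1\to L^\infty$ bound to boundedness on every $L^p\to L^q$.
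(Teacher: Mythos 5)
Your proof is correct and follows exactly the route the paper intends: the paper states this corollary without proof immediately after Proposition \ref{cs} and Corollary \ref{cs1}, and the implicit argument is precisely your observation that $\alpha\leq 0$ makes $k_\alpha$ uniformly bounded, giving $L^1\to L^\infty$ boundedness, i.e. $(1,0)\in G(K_\alpha)\cap G(K_\alpha^+)$, after which part (1) of those results yields $G(K_\alpha)=G(K_\alpha^+)=E$.
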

Corollary \ref{gke} means that $K_{\alpha},K_{\alpha}^+: L^p\rightarrow L^q$ are bounded for any $1\leq p,q\leq\infty$ if $\alpha\leq 0.$
For any $\beta>-1,$ denote $dv_{\beta}(z)=c_\beta(1-|z|^2)^{\beta}dv(z),$ where $c_\beta=\frac{\Gamma(d+\beta+1)}{\Gamma(d+1)\Gamma(\beta+1)}.$ For $1\leq p\leq \infty,$ let $A_\beta^p=H(\mathbb{B}^d)\cap L^p(dv_{\beta})$ be the weighted Bergman space on $\mathbb{B}^d,$ in particular,  $A_\beta^\infty=H^\infty$ is just the bounded holomorphic function space. Recall that  $K_{d+1}$ is the Bergman projection from $L^p$ onto $A_0^p,$ a well known result is that $K_{d+1}(L^p)=A_0^p$ for $1<p<\infty.$ Now we establish a general result for $\alpha\geq d+1.$
\begin{prop}\label{kka} Suppose that  $\alpha\geq d+1$ and  $1<p<\infty,$ then $$K_{\alpha}(L^p)=K_{\alpha}(A_0^p)=A_{p(\alpha-d-1)}^p.$$
\end{prop}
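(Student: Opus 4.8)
The plan is to establish the chain of equalities in two stages: first the ``holomorphic reduction'' $K_\alpha(L^p)=K_\alpha(A_0^p)$, which peels off the non-holomorphic part of the domain through the Bergman projection $K_{d+1}$, and then the genuinely analytic identification $K_\alpha(A_0^p)=A_{p(\alpha-d-1)}^p$, which rests on reading $K_\alpha$ restricted to $H(\mathbb{B}^d)$ as a fractional radial differential operator via Lemma \ref{kr}.

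For the first stage, the inclusion $K_\alpha(A_0^p)\subseteq K_\alpha(L^p)$ is immediate from $A_0^p\subseteq L^p$. For the reverse I would prove that $K_\alpha f=K_\alpha(K_{d+1}f)$ as holomorphic functions on $\mathbb{B}^d$, for every $f\in L^p$. Fix $z\in\mathbb{B}^d$ and set $\phi_z(w)=(1-\langle w,z\rangle)^{-\alpha}$; since $\abs{\langle w,z\rangle}\le\abs z<1$ on $\mathbb{B}^d$, the function $\phi_z$ is holomorphic and bounded, so $\phi_z\in A_0^2\cap H^\infty$. Writing $\langle u,v\rangle=\int_{\mathbb{B}^d}u\bar v\,dv$, one has $K_\alpha f(z)=\langle f,\phi_z\rangle$. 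Because $K_{d+1}$ is the self-adjoint orthogonal projection of $L^2$ onto $A_0^2$ and $K_{d+1}\phi_z=\phi_z$, for $f\in L^2$ we get $\langle f,\phi_z\rangle=\langle K_{d+1}f,\phi_z\rangle=K_\alpha(K_{d+1}f)(z)$. Since $L^2$ is dense in $L^p$, $\phi_z\in L^{p'}$, and $K_{d+1}$ is bounded on $L^p$ for $1<p<\infty$, both sides are, for each fixed $z$, continuous linear functionals of $f\in L^p$, so the identity extends to all of $L^p$. The stated fact $K_{d+1}(L^p)=A_0^p$ then gives $K_{d+1}f\in A_0^p$, whence $K_\alpha(L^p)\subseteq K_\alpha(A_0^p)$ and the first equality follows.

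For the second stage, write $\gamma=\alpha-(d+1)\ge 0$. A direct computation on monomials, using the expansion of $(1-\langle z,w\rangle)^{-\alpha}$ and the moment formula $\int_{\mathbb{B}^d}w^\nu\overline{w^\mu}\,dv=\delta_{\nu\mu}\frac{\nu!\,d!}{(d+\abs\nu)!}$, yields $K_\alpha(w^\nu)=\frac{\Gamma(d+1)\Gamma(\alpha+\abs\nu)}{\Gamma(\alpha)\Gamma(d+1+\abs\nu)}z^\nu$. Thus $K_\alpha$ acts on $H(\mathbb{B}^d)$ as the diagonal operator multiplying the homogeneous term of degree $k$ by $\lambda_k=\frac{\Gamma(d+1)\Gamma(\alpha+k)}{\Gamma(\alpha)\Gamma(d+1+k)}\sim c\,k^{\gamma}$; by Lemma \ref{kr} this is precisely the fractional radial differential operator $R^{0,\gamma}$. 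Invoking the standard isomorphism theorem for radial derivatives between weighted Bergman spaces (\cite{Zhu}), $R^{0,\gamma}\colon A_0^p\to A_{p\gamma}^p$ is bounded and invertible onto $A_{p\gamma}^p=A_{p(\alpha-d-1)}^p$. Combining with the first equality gives $K_\alpha(L^p)=K_\alpha(A_0^p)=A_{p(\alpha-d-1)}^p$.

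The main obstacle is the second equality, and specifically the exact matching of parameters: one must confirm that $K_\alpha|_{H(\mathbb{B}^d)}$ coincides with a radial derivative $R^{s,t}$ whose weight shift is exactly $t=\gamma=\alpha-(d+1)$ (the content of Lemma \ref{kr}), and that the corresponding operator is an \emph{isomorphism} of the relevant weighted Bergman spaces. Surjectivity here, i.e. the invertibility of $R^{s,t}$, is exactly what upgrades the range from a mere subset to all of $A_{p(\alpha-d-1)}^p$. A secondary point demanding care is the boundedness of $K_{d+1}$ on $L^p$ together with the density argument in the first stage, both of which require $1<p<\infty$.
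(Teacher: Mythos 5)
Your proof is correct and follows essentially the same route as the paper: reduce $K_\alpha(L^p)$ to $K_\alpha(A_0^p)$ via the identity $K_\alpha K_{d+1}=K_\alpha$ (Lemma \ref{3k}, which you reprove inline by the reproducing property of the Bergman projection), then identify $K_\alpha|_{H(\mathbb{B}^d)}$ with $R^{0,\alpha-d-1}$ and invoke the isomorphism $R^{0,\alpha-d-1}\colon A_0^p\to A_{p(\alpha-d-1)}^p$, which is exactly the combination of Theorem 14 of \cite{ZZ} and Theorem 2.19 of \cite{Zhu} used in the paper. The only cosmetic difference is that the paper cites these two references separately for the two inclusions, whereas you package them as a single isomorphism statement.
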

To prove Proposition \ref{kka}, we need some lemmas. The following Lemma \ref{3k} was proved \cite{CF} in the case $d=1,$ use the same method, it can be proved in the general case,  see Lemma 11 of  \cite{CF}  for  more detail.
\begin{lem} \label{3k} If $\alpha>0$ and $1<p<\infty,$ then 
$$K_{\alpha}K_{d+1}=K_{\alpha} ~\text{on} ~L^p.$$
\end{lem}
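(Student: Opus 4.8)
The plan is to establish the identity pointwise, by writing $K_\alpha K_{d+1}f$ as an iterated integral, interchanging the order of integration, and recognizing the inner integral as an instance of the Bergman reproducing formula. First I would fix $f\in L^p$ with $1<p<\infty$ and a point $z\in\mathbb{B}^d$. Since the Bergman projection $K_{d+1}:L^p\to A_0^p$ is bounded for $1<p<\infty$, the function $K_{d+1}f$ lies in $A_0^p\subset L^1(\mathbb{B}^d)$, and because $w\mapsto k_\alpha(z,w)$ is bounded on $\mathbb{B}^d$ for each fixed $z$ (as $|1-\langle z,w\rangle|\geq 1-|z|>0$), the quantity $K_\alpha K_{d+1}f(z)$ is a well-defined absolutely convergent integral. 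Writing out both operators gives
\[
K_\alpha K_{d+1}f(z)=\int_{\mathbb{B}^d}\int_{\mathbb{B}^d}\frac{1}{(1-\langle z,u\rangle)^\alpha}\,\frac{f(w)}{(1-\langle u,w\rangle)^{d+1}}\,dv(w)\,dv(u).
\]

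Next I would justify Fubini's theorem. Since $k_\alpha(z,\cdot)$ is bounded for fixed $z$, it suffices to bound $\int_{\mathbb{B}^d}\int_{\mathbb{B}^d}|f(w)|\,|1-\langle u,w\rangle|^{-(d+1)}\,dv(w)\,dv(u)$; integrating first in $u$ and using the standard Forelli--Rudin type estimate $\int_{\mathbb{B}^d}|1-\langle u,w\rangle|^{-(d+1)}\,dv(u)=O\big(\log\frac{1}{1-|w|^2}\big)$, the remaining integral $\int_{\mathbb{B}^d}|f(w)|\log\frac{1}{1-|w|^2}\,dv(w)$ is finite by H\"older's inequality, since $\log\frac{1}{1-|w|^2}\in L^{p'}$ for every $p'<\infty$. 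This lets me exchange the order of integration and reduces the claim to computing, for fixed $z,w$, the inner integral
\[
I(z,w)=\int_{\mathbb{B}^d}\frac{1}{(1-\langle z,u\rangle)^\alpha}\,\frac{1}{(1-\langle u,w\rangle)^{d+1}}\,dv(u),
\]
which I want to show equals $k_\alpha(z,w)$.

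The key step is to evaluate $I(z,w)$ using the reproducing property of the Bergman kernel. Taking conjugates turns $I(z,w)$ into $\int_{\mathbb{B}^d}h_z(u)\,(1-\langle w,u\rangle)^{-(d+1)}\,dv(u)$, where $h_z(u)=(1-\langle u,z\rangle)^{-\alpha}$ is holomorphic in $u$ and, since $\alpha>0$ and $|1-\langle u,z\rangle|\geq 1-|z|>0$, bounded on $\mathbb{B}^d$, hence a member of the Bergman space. The reproducing formula then gives $\overline{I(z,w)}=h_z(w)=(1-\langle w,z\rangle)^{-\alpha}$, and conjugating back yields $I(z,w)=(1-\langle z,w\rangle)^{-\alpha}=k_\alpha(z,w)$. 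Substituting this into the iterated integral gives $K_\alpha K_{d+1}f(z)=\int_{\mathbb{B}^d}k_\alpha(z,w)f(w)\,dv(w)=K_\alpha f(z)$, which is the desired identity.

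I expect the main technical points --- rather than any genuine obstacle --- to be the bookkeeping of holomorphic versus antiholomorphic dependence on $u$ needed to apply the reproducing formula in the correct slot (handled cleanly by passing to the conjugate of $I$), together with the Fubini justification. The role of the hypotheses is transparent in this scheme: $1<p<\infty$ guarantees that $K_{d+1}f$ is a genuine Bergman-space function, so the outer integral is absolutely convergent, while $\alpha>0$ ensures that $h_z$ is bounded and holomorphic, so that the reproducing formula applies to it.
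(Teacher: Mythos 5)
Your argument is correct and complete: the Fubini justification via the Forelli--Rudin logarithmic estimate together with $\log\frac{1}{1-|w|^2}\in L^{p'}$ (which is exactly where $1<p<\infty$ enters), and the evaluation of the inner integral by applying the reproducing formula to the bounded holomorphic function $h_z$, are both sound. The paper itself does not prove this lemma but defers to Lemma 11 of \cite{CF} (proved there for $d=1$ by the same kind of reproducing-kernel computation), so your write-up is essentially the standard intended argument, made self-contained for general $d$. One trivial quibble: boundedness of $h_z(u)=(1-\langle u,z\rangle)^{-\alpha}$ for fixed $z$ holds for every real $\alpha$, not only $\alpha>0$, since $1-|z|\leq|1-\langle u,z\rangle|\leq 2$; this does not affect the proof.
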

Lemma \ref{3k} shows that for  $1<p,q<\infty$,  $K_\alpha: L^p\rightarrow L^q$ is bounded if and only if $K_\alpha: A_0^p\rightarrow A_0^q$ is bounded. Now we turn to the behavior of $K_\alpha$ on holomorphic function spaces. Recall first the definition of fractional radial differential operator $R^{s,t}$ on $H(\mathbb{B}^d).$

For any two real parameters $s$ and $t$ with the property that neither $d + s $ nor $d + s + t$ is a negative integer, the invertible operator $R^{s,t}$ is given by 
  $$R^{s,t}f(z)=\sum_{n=0}^{\infty}\frac{\Gamma(d+1+s)\Gamma(d+1+n+s+t)}{\Gamma(d+1+s+t)\Gamma(d+1+n+s)}f_n(z),$$ for any $f=\sum_{n=0}^{\infty}f_n\in H(\mathbb{B}^d)$  with homogeneous expansion. In fact, it can be checked by direct calculation that the invertible operator of  $R^{s,t}$ is just $R^{s+t,-t}.$ Be careful of  the  invertible operator here merely means that is linear.
 \begin{lem}\label{kr} For $\alpha>0$ and $1<p<\infty,$ the following holds on $A_0^p,$ $$K_\alpha=R^{0,\alpha-d-1}. $$
 \end{lem}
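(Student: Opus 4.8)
The plan is to verify the claimed identity first on the holomorphic monomials, where it reduces to a finite and completely explicit computation, and then to propagate it to all of $A_0^p$ by a dilation argument, deliberately avoiding any appeal to the (at this stage unproven) boundedness of $K_\alpha$ on $A_0^p$.

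First I would expand the kernel in its power series. Using $(1-x)^{-\alpha}=\sum_{n\ge0}\frac{(\alpha)_n}{n!}x^n$ with the Pochhammer symbol $(\alpha)_n=\Gamma(\alpha+n)/\Gamma(\alpha)$, together with the multinomial expansion of $\langle z,w\rangle^n$, one obtains
$$k_\alpha(z,w)=\sum_{\gamma}\frac{(\alpha)_{|\gamma|}}{\gamma!}\,z^\gamma\bar w^\gamma.$$
For fixed $z\in\mathbb{B}^d$ one has $|\langle z,w\rangle|\le|z|<1$ uniformly for $w\in\overline{\mathbb{B}^d}$, so this series converges uniformly in $w$; since $f\in A_0^p\subset L^1$, termwise integration against $f$ is legitimate and yields
$$K_\alpha f(z)=\sum_\gamma\frac{(\alpha)_{|\gamma|}}{\gamma!}\,z^\gamma\int_{\mathbb{B}^d}\bar w^\gamma f(w)\,dv(w).$$
The remaining integral is a Taylor coefficient functional. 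Writing $f=\sum_n f_n$ with $f_n=\sum_{|\gamma|=n}a_\gamma z^\gamma$, I would evaluate $\int_{\mathbb{B}^d}\bar w^\gamma f\,dv$ by first replacing $f$ with its dilate $f_r(w)=f(rw)$, whose homogeneous expansion converges uniformly on $\overline{\mathbb{B}^d}$, applying the orthogonality relation $\int_{\mathbb{B}^d}w^\beta\bar w^\gamma\,dv=\delta_{\beta\gamma}\frac{\gamma!\,\Gamma(d+1)}{\Gamma(d+1+|\gamma|)}$, and then letting $r\to1$ using $f_r\to f$ in $L^p$ (a standard property of $A_0^p$ for $1\le p<\infty$) paired against the bounded function $\bar w^\gamma$. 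This gives $\int_{\mathbb{B}^d}\bar w^\gamma f\,dv=a_\gamma\frac{\gamma!\,\Gamma(d+1)}{\Gamma(d+1+|\gamma|)}$.

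Substituting back and regrouping by homogeneous degree produces
$$K_\alpha f(z)=\sum_{n\ge0}\frac{\Gamma(d+1)\,\Gamma(\alpha+n)}{\Gamma(\alpha)\,\Gamma(d+1+n)}\,f_n(z),$$
and it then remains only to identify this multiplier with the one defining $R^{0,\alpha-d-1}$: setting $s=0$ and $t=\alpha-d-1$ in the definition of $R^{s,t}$ gives numerator $\Gamma(d+1)\Gamma(n+\alpha)$ and denominator $\Gamma(\alpha)\Gamma(d+1+n)$, exactly the coefficient above; note that $d+s=d$ and $d+s+t=\alpha-1$ are not negative integers when $\alpha>0$, so $R^{0,\alpha-d-1}$ is well defined. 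The main technical point is the middle step, namely justifying the interchange of summation and integration and the extraction of the Taylor coefficients for a \emph{general} $f\in A_0^p$ rather than a polynomial. This is precisely what the uniform convergence of the kernel series in $w$ (for fixed $z$) secures, while the dilation limit lets us read off the coefficients without knowing in advance that $K_\alpha$ is continuous on $A_0^p$; everything else is bookkeeping with Gamma functions.
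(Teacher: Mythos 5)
Your proposal is correct and follows essentially the same route as the paper, whose proof simply asserts ``by direct calculation'' the multiplier identity $K_\alpha f=\sum_{n\ge0}\frac{\Gamma(d+1)\Gamma(\alpha+n)}{\Gamma(\alpha)\Gamma(d+1+n)}f_n$ and matches it to $R^{0,\alpha-d-1}$. You have merely supplied the details of that calculation (kernel expansion, termwise integration, monomial orthogonality, and the dilation step to extract Taylor coefficients of a general $f\in A_0^p$), all of which check out.
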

  \begin{proof} Suppose $f=\sum_{n=0}^{\infty}f_n\in A_0^p$ with the homogeneous expansion. By direct calculation, it implies that 
    \begin{equation}\label{KR}K_\alpha f=\sum_{n=0}^{\infty} \frac{\Gamma(d+1)\Gamma(\alpha+n)}{\Gamma(\alpha)\Gamma(d+1+n)}f_n.\end{equation}
    It leads to the desired result.
\end{proof}
 {\noindent{\bf{Proof of  Proposition \ref{kka}.}} Lemma \ref{3k} implies that $K_{\alpha}(L^p)=K_{\alpha}(A_0^p).$ Now we prove $K_{\alpha}(A_0^p)=A_{p(\alpha-d-1)}^p.$  By Theorem 14 of \cite{ZZ}, which is a characterization of Bergman space, shows that $f\in A_0^p$ if and only if $R^{0,\alpha-d-1}f\in L^p(dv_{p(\alpha-d-1)}),$ namely $f\in A_0^p$ if and only if $R^{0,\alpha-d-1}f\in A_{p(\alpha-d-1)}^p.$
 Note that $K_\alpha=R^{0,\alpha-d-1}$ by Lemma \ref{kr}, it follows that $f\in A_0^p$ if and only if $K_\alpha f\in A_{p(\alpha-d-1)}^p.$ It shows that $K_\alpha (A_0^p)\subset A_{p(\alpha-d-1)}^p.$ To prove another direction, suppose that $g\in A_{p(\alpha-d-1)}^p.$ Since $K_\alpha=R^{0,\alpha-d-1}$ is  invertible on $H(\mathbb{B}^d),$ i.e. there exists $f\in H(\mathbb{B}^d)$ such that $K_\alpha f=R^{0,\alpha-d-1}f=g.$ From Theorem 2.19 of \cite{Zhu}, there exists a positive constant $c$ that depends  only on $\alpha,d,p$ such that  
 $$ \Vert f\Vert_{L^p}\leq c \Vert g \Vert _{A_{p(\alpha-d-1)}^p}.$$ It means that $f\in A_0^p.$ Thus $A_{p(\alpha-d-1)}^p\subset K_\alpha (A_0^p).$ It completes the proof. \qed
 \begin{cor}\label{kar} Suppose that  $\alpha\geq d+1$ and  $1<p<\infty,$ then for any $\gamma>-1,$ the following holds,$$K_\alpha(L^p(dv_\gamma))=K_\alpha(A_\gamma^p)=A_{\gamma+p(\alpha-d-1)}^p.$$ \end{cor}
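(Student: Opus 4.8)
The plan is to follow the proof of Proposition \ref{kka} almost verbatim, replacing the two unweighted tools by their weighted counterparts and upgrading the reproducing mechanism to the measure $dv_\gamma$. First I would record that the multiplier identity \eqref{KR} persists on $A_\gamma^p$: the termwise integration that produced \eqref{KR} is valid for every $f=\sum_n f_n\in H(\mathbb{B}^d)$ for which $K_\alpha f$ converges, and in particular for $f\in A_\gamma^p$, since the homogeneous expansion converges there and $k_\alpha(z,\cdot)$ is bounded on $\mathbb{B}^d$ for each fixed $z$. Hence $K_\alpha=R^{0,\alpha-d-1}$ on $A_\gamma^p$, exactly as in Lemma \ref{kr}.

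Second, I would establish the holomorphic equality $K_\alpha(A_\gamma^p)=A_{\gamma+p(\alpha-d-1)}^p$, the weighted analogue of the second half of Proposition \ref{kka}. By the weighted form of the Bergman-space characterization of \cite{ZZ} (Theorem 14 with general parameters), a holomorphic $f$ lies in $A_\gamma^p$ if and only if $R^{0,t}f\in A_{\gamma+pt}^p$; taking $t=\alpha-d-1\ge 0$ and using $K_\alpha=R^{0,\alpha-d-1}$ gives $K_\alpha(A_\gamma^p)\subseteq A_{\gamma+p(\alpha-d-1)}^p$. For the reverse inclusion I would invert: given $g\in A_{\gamma+p(\alpha-d-1)}^p$, the operator $R^{0,\alpha-d-1}$ is invertible on $H(\mathbb{B}^d)$ with inverse $R^{\alpha-d-1,-(\alpha-d-1)}$, so there is a holomorphic $f$ with $K_\alpha f=g$, and the weighted version of Theorem 2.19 of \cite{Zhu} forces $f\in A_\gamma^p$.

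Finally, for $K_\alpha(L^p(dv_\gamma))=K_\alpha(A_\gamma^p)$ the inclusion $\supseteq$ is immediate from $A_\gamma^p\subseteq L^p(dv_\gamma)$. For $\subseteq$ I would use the reproducing identity $K_\alpha K_{d+1}=K_\alpha$ of Lemma \ref{3k}: since $\alpha\ge d+1$ one has $|k_{d+1}(z,w)|\le 2^{\alpha-d-1}|k_\alpha(z,w)|$, so wherever $K_\alpha^+f$ is finite the Bergman projection $K_{d+1}f$ is defined and holomorphic, and $K_\alpha f=K_\alpha(K_{d+1}f)$. It then remains to see $K_{d+1}f\in A_\gamma^p$, which I would deduce by combining a weighted Forelli--Rudin estimate, showing $K_\alpha\colon L^p(dv_\gamma)\to L^p(dv_{\gamma+p(\alpha-d-1)})$ so that $K_\alpha f\in A_{\gamma+p(\alpha-d-1)}^p$, with the holomorphic equality of the previous step run backward through the weighted \cite{ZZ} characterization. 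Sandwiching $A_{\gamma+p(\alpha-d-1)}^p=K_\alpha(A_\gamma^p)\subseteq K_\alpha(L^p(dv_\gamma))\subseteq A_{\gamma+p(\alpha-d-1)}^p$ then closes the argument, and the assertion $K_\alpha(A_\gamma^p)=A_{\gamma+p(\alpha-d-1)}^p$ is already in hand.

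The main obstacle is this last weighted mapping estimate together with the bookkeeping around $dv_\gamma$: the Schur-test computation with test weight $(1-|u|^2)^{-b}$ and the kernel estimate $\int_{\mathbb{B}^d}(1-|w|^2)^{c}|1-\langle z,w\rangle|^{-\alpha}\,dv(w)\asymp (1-|z|^2)^{d+1+c-\alpha}$ succeeds precisely because the target weight $\gamma+p(\alpha-d-1)$ is the unique balancing exponent. Verifying that the admissible range of $b$ is nonempty, and that the nonnegative-integer exclusions in the definition of $R^{s,t}$ and the hypotheses of the weighted \cite{ZZ} theorem are satisfied for all $\gamma>-1$, is the routine but essential part of the work.
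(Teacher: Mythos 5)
Your proposal is correct and is essentially the argument the paper intends: Corollary \ref{kar} is stated without proof precisely because it is the weighted rerun of Proposition \ref{kka}, using the general-parameter versions of Theorem 14 of \cite{ZZ} and Theorem 2.19 of \cite{Zhu} together with the identity $K_\alpha=R^{0,\alpha-d-1}$. The only cosmetic difference is your detour through the unweighted identity $K_\alpha K_{d+1}=K_\alpha$ applied on $L^p(dv_\gamma)$, where the projection reproducing against $dv_\gamma$ would be the more natural tool, but your closing sandwich $A^p_{\gamma+p(\alpha-d-1)}=K_\alpha(A_\gamma^p)\subseteq K_\alpha(L^p(dv_\gamma))\subseteq A^p_{\gamma+p(\alpha-d-1)}$ renders that step dispensable anyway.
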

 The following Proposition \ref{kaa} gives the image of $K_\alpha$ in case of $p=\infty.$  Denote   $\mathcal{B}_{\beta}$ by the  weighted Bloch space   on $\mathbb{B}^d,$  see  definition for Section 7.1 of \cite{Zhu}.

 \begin{prop}\label{kaa} For $\alpha\geq d+1,$ then 
 $K_{\alpha}(H^\infty)\subsetneq K_{\alpha}(L^\infty)=\mathcal{B}_{\alpha-d}.$ 
 \end{prop}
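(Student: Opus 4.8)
The plan is to prove the two assertions of Proposition \ref{kaa} in turn: first the identity $K_\alpha(L^\infty)=\mathcal{B}_{\alpha-d}$, and then the strict inclusion $K_\alpha(H^\infty)\subsetneq K_\alpha(L^\infty)$.

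For the inclusion $K_\alpha(L^\infty)\subseteq\mathcal{B}_{\alpha-d}$, I would fix $f\in L^\infty$ and estimate the radial derivative of the (holomorphic) function $K_\alpha f$ by differentiating under the integral sign. Since $\partial_{z_j}k_\alpha(z,w)=\alpha\,\overline{w}_j(1-\langle z,w\rangle)^{-\alpha-1}$, the radial derivative is
$$ R(K_\alpha f)(z)=\alpha\int_{\mathbb{B}^d}\frac{\langle z,w\rangle}{(1-\langle z,w\rangle)^{\alpha+1}}\,f(w)\,dv(w). $$
Bounding $|f|\leq\|f\|_{L^\infty}$ and $|\langle z,w\rangle|\leq 1$ and invoking the standard kernel estimate of \cite{Zhu} (Theorem 1.12), which applies because $\alpha+1>d+1$, gives $\int_{\mathbb{B}^d}|1-\langle z,w\rangle|^{-(\alpha+1)}\,dv(w)\asymp(1-|z|^2)^{d-\alpha}$. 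Hence $(1-|z|^2)^{\alpha-d}\,|R(K_\alpha f)(z)|\lesssim\|f\|_{L^\infty}$, which is exactly the defining condition for $K_\alpha f\in\mathcal{B}_{\alpha-d}$.

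For the reverse inclusion I would realize every $g\in\mathcal{B}_{\alpha-d}$ as $K_\alpha f$ with $f\in L^\infty$. When $\alpha>d+1$ I set $f(w)=c_{\alpha-d-1}(1-|w|^2)^{\alpha-d-1}g(w)$; the weighted reproducing formula of \cite{Zhu} then yields $K_\alpha f=g$ (the kernel exponent $(d+1)+(\alpha-d-1)=\alpha$ matches), and the growth estimate for weighted Bloch functions, $|g(w)|\lesssim(1-|w|^2)^{d+1-\alpha}$ (obtained by integrating $Rg$ radially), shows that $f\in L^\infty$. The endpoint $\alpha=d+1$ is the classical fact that the Bergman projection maps $L^\infty$ onto the Bloch space $\mathcal{B}=\mathcal{B}_1$ (see \cite{Zhu}); there the naive choice $f=cg$ is not bounded, and one must instead construct $f$ from $(1-|w|^2)Rg(w)$ via the derivative form of the reproducing formula. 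I expect this surjectivity direction, and in particular the uniform handling of the boundary exponent $\alpha=d+1$, to be the main obstacle.

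Finally, for $K_\alpha(H^\infty)\subsetneq K_\alpha(L^\infty)$ I would use Lemma \ref{kr}: on $H(\mathbb{B}^d)$ one has $K_\alpha=R^{0,\alpha-d-1}$, an invertible operator whose inverse is $R^{\alpha-d-1,d+1-\alpha}$. By the characterization of weighted Bloch spaces through fractional radial differential operators (\cite{Zhu}), $R^{\alpha-d-1,d+1-\alpha}$ maps $\mathcal{B}_{\alpha-d}$ bijectively onto $\mathcal{B}_1=\mathcal{B}$, and $g\in K_\alpha(H^\infty)$ if and only if $R^{\alpha-d-1,d+1-\alpha}g\in H^\infty$. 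Since $H^\infty\subsetneq\mathcal{B}$ — witnessed for instance by $h_0(z)=\log\frac{1}{1-z_1}\in\mathcal{B}\setminus H^\infty$ — the function $g_0=R^{0,\alpha-d-1}h_0$ lies in $\mathcal{B}_{\alpha-d}=K_\alpha(L^\infty)$ but not in $K_\alpha(H^\infty)$, which gives the strict inclusion; the inclusion $K_\alpha(H^\infty)\subseteq K_\alpha(L^\infty)$ is automatic from $H^\infty\subset L^\infty$.
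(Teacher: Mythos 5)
Your proof is correct, but it follows a genuinely different route from the paper's. The paper does not prove the identity $K_\alpha(L^\infty)=\mathcal{B}_{\alpha-d}$ at all --- it simply cites Theorem 7.1 of \cite{Zhu} --- whereas you reprove both inclusions directly (radial-derivative estimate of the kernel for one direction, the weighted reproducing formula with $f=c_{\alpha-d-1}(1-|w|^2)^{\alpha-d-1}g$ for the other); this is sound, including your observation that the endpoint $\alpha=d+1$ needs the derivative form of the reproducing formula. For the strictness, the paper's argument is quite different: for $\alpha>d+1$ it sandwiches $K_\alpha(H^\infty)\subset\bigcap_{1<p<\infty}K_\alpha(A_0^p)=\bigcap_{1<p<\infty}A^p_{p(\alpha-d-1)}$ using Proposition \ref{kka}, then applies the pointwise estimate $A^p_\gamma\subset\mathcal{B}_{(d+1+\gamma)/p}$ to land in $\mathcal{B}_{(\alpha-d)+\frac{d+1}{p}-1}$, which for $p>d+1$ is a \emph{strictly} smaller weighted Bloch space than $\mathcal{B}_{\alpha-d}$; the case $\alpha=d+1$ is handled separately via $K_{d+1}(H^\infty)=H^\infty\subsetneq\mathcal{B}_1$. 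You instead conjugate by the invertible fractional operator, using that $R^{0,\alpha-d-1}$ carries $\mathcal{B}$ bijectively onto $\mathcal{B}_{\alpha-d}$ and $H^\infty$ onto $K_\alpha(H^\infty)$, so strictness reduces to $H^\infty\subsetneq\mathcal{B}$ with the explicit witness $\log\frac{1}{1-z_1}$. Each approach buys something: yours produces a concrete element of $\mathcal{B}_{\alpha-d}\setminus K_\alpha(H^\infty)$ and treats $\alpha=d+1$ and $\alpha>d+1$ uniformly (the conjugating operator degenerates to the identity), while the paper's yields the stronger quantitative containment $K_\alpha(H^\infty)\subset\mathcal{B}_\beta$ for every $\beta>\alpha-d-1$. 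The only point you should make explicit is that your bijectivity claim for $R^{\alpha-d-1,d+1-\alpha}$ on weighted Bloch spaces rests on the identification, valid for exponent $\alpha-d>1$, of $\mathcal{B}_{\alpha-d}$ with the growth space of functions satisfying $(1-|z|^2)^{\alpha-d-1}|g(z)|=O(1)$; with that citation in place the argument is complete.
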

  \begin{proof} 
  Note that $K_{\alpha}(L^\infty)=\mathcal{B}_{\alpha-d}$ by Theorem 7.1 of \cite{Zhu}. If $\alpha=d+1,$ then $K_{d+1}(H^\infty)=H^\infty,$ thus $K_{d+1}(H^\infty)\subsetneq \mathcal{B}_{\alpha-d}.$ Now turn to the case $\alpha>d+1.$ Note that $K_{\alpha}(H^\infty)\subset K_{\alpha}(A_{0}^p)$ for any $1<p<\infty,$ then it implies by Proposition \ref{kka} that \begin{equation}\label{kh}K_{\alpha}(H^\infty)\subset\bigcap_{1<p<\infty} A_{p(\alpha-d-1)}^p.\end{equation}
  On the other hand, from Theorem 2.1 of \cite{Zhu}, a pointwise estimates for functions in weighted Bergman spaces, we know that \begin{equation}\label{apb}A_{\gamma}^p\subset \mathcal{B}_{\frac{d+1+\gamma}{p}}.\end{equation}  
  Combing (\ref{kh}) with (\ref{apb}), it implies that $$K_{\alpha}(H^\infty)\subset\bigcap_{1<p<\infty}\mathcal{B}_{(\alpha-d)+\frac{d+1}{p}-1}.$$
  Together with the fact that the weighted Bloch space is strictly increased, namely $\mathcal{B}_\beta\subsetneq \mathcal{B}_{\beta'}$ whenever $0<\beta<\beta',$
it implies that $K_{\alpha}(H^\infty)\subsetneq \mathcal{B}_{\alpha-d}.$
  \end{proof}
  \begin{rem} The  monotonicity of the weighted Bloch space can be obtained as follows. It is easy to see that the weighted Bloch space is increased, so it suffices to show that is strict. For any $0<\beta<\beta',$ there exist $p>1$ and $\varepsilon>0$ such that $$\beta<\beta-1+\frac{d+\varepsilon}{p}<\beta'.$$ Combing (\ref{apb}) and the following Lemma \ref{ba}, it implies that $$\mathcal{B}_\beta\subsetneq A_{p(\beta-1)-1+\varepsilon}^p\subset \mathcal{B}_{\beta'}.$$
  \end{rem}
\section{Proof of Theorem \ref{thm1}} 
In this section, we prove Theorem \ref{thm1}. We need several embedding theorems of holomorphic function spaces on the unit ball $\mathbb{B}^d.$ For convenience, we state them without proof as follows. 
\begin{lem}\cite{ZZ}\label{aa} Let $0<q<p<\infty$.Then $A_\beta^p\subset A_\gamma^q $ if and only if $\frac{\beta+1}{p}<\frac{\gamma+1}{q}.$ And in this case the inclusions are strict.
\end{lem}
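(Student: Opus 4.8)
The plan is to treat the two implications separately: the sufficiency $\frac{\beta+1}{p}<\frac{\gamma+1}{q}\Rightarrow A_\beta^p\subset A_\gamma^q$ reduces to a weighted H\"older inequality and uses no holomorphy at all, whereas the necessity and the strictness are the genuine content and force the use of holomorphic test functions that are spread out near the \emph{whole} boundary.

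For sufficiency I assume $\frac{\beta+1}{p}<\frac{\gamma+1}{q}$ and take $f\in A_\beta^p$. Since $q<p$, I apply H\"older with exponents $p/q$ and $p/(p-q)$ after the splitting $(1-|z|^2)^\gamma|f|^q=\big(|f|^q(1-|z|^2)^{\beta q/p}\big)\cdot(1-|z|^2)^{\gamma-\beta q/p}$, which yields
\[
\int_{\mathbb{B}^d}|f|^q(1-|z|^2)^\gamma\,dv\le\Big(\int_{\mathbb{B}^d}|f|^p(1-|z|^2)^\beta\,dv\Big)^{q/p}\Big(\int_{\mathbb{B}^d}(1-|z|^2)^{(\gamma-\beta q/p)\frac{p}{p-q}}\,dv\Big)^{\frac{p-q}{p}}.
\]
The first factor is $\|f\|_{A_\beta^p}^q$ up to the normalizing constants $c_\beta,c_\gamma$; the second is finite exactly when $(\gamma-\beta q/p)\frac{p}{p-q}>-1$, which rearranges to precisely $\frac{\beta+1}{p}<\frac{\gamma+1}{q}$. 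Hence $A_\beta^p\subset A_\gamma^q$ (in fact the full inclusion $L^p(dv_\beta)\subset L^q(dv_\gamma)$ holds).

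For necessity I argue the contrapositive: if $\frac{\beta+1}{p}\ge\frac{\gamma+1}{q}$ I must produce a holomorphic $f\in A_\beta^p\setminus A_\gamma^q$. The crucial difficulty, and the reason the sharp exponent $\frac{\beta+1}{p}$ rather than $\frac{d+1+\beta}{p}$ appears, is that functions concentrated at a single boundary point — e.g.\ $(1-\langle z,a\rangle)^{-c}$, or the orbit of a fixed function under the weighted composition isometries of $A_\beta^p$ — only detect the coarser threshold $\frac{d+1+\beta}{p}$ and are too weak. One must instead use holomorphic functions spread over all scales and directions, realizing the radial growth $(1-|z|^2)^{-(\beta+1)/p}$. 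I would build these from a Hadamard-lacunary homogeneous expansion (or, in several variables, a Bergman-metric lattice/atomic sum $\sum_k\lambda_k u_k$ with $u_k$ localized at $a_k$), designed so that the two weighted norms reduce to weighted sequence conditions of the form $\sum_k|c_k|^2 n_k^{-2(\beta+1)/p}$ and $\sum_k|c_k|^2 n_k^{-2(\gamma+1)/q}$. When $\frac{\beta+1}{p}>\frac{\gamma+1}{q}$ strictly, the choice $|c_k|^2=n_k^{2(\beta+1)/p}/k^2$ makes the first sum converge and the second diverge, placing $f$ in $A_\beta^p\setminus A_\gamma^q$.

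The main obstacle is the borderline $\frac{\beta+1}{p}=\frac{\gamma+1}{q}$, where the two sequence conditions coincide and lacunary series cannot separate the spaces; there the leading radial factor $(1-|z|^2)^{-1}$ sits in both integrands and one must exploit $q<p$ through a finer, logarithmic tuning of the coefficients across the dyadic shells $1-|z|\approx 2^{-j}$, reducing matters to a one-dimensional comparison $\sum_j\mu_j^p<\infty$ versus $\sum_j\mu_j^q=\infty$. Finally, strictness of the inclusion in the region $\frac{\beta+1}{p}<\frac{\gamma+1}{q}$ follows from the same construction with the roles reversed: $|c_k|^2=n_k^{2(\gamma+1)/q}/k^2$ gives $g\in A_\gamma^q\setminus A_\beta^p$, with no borderline subtlety since the inequality is strict. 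I expect the technical heart of the whole argument to be establishing the sequence-norm equivalences for the spread-out test functions and handling the borderline equality.
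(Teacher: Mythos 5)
The paper does not actually prove this lemma; it cites Theorem 70 of \cite{ZZ}, and your overall strategy (weighted H\"older for sufficiency, lacunary sums of homogeneous polynomials for necessity) is the same one used there. Your sufficiency half is complete and correct: H\"older with exponents $p/q$ and $p/(p-q)$ applied to the splitting $|f|^q(1-|z|^2)^{\gamma}=\bigl(|f|^q(1-|z|^2)^{\beta q/p}\bigr)(1-|z|^2)^{\gamma-\beta q/p}$ gives the full inclusion $L^p(dv_\beta)\subset L^q(dv_\gamma)$ exactly when $(\gamma-\beta q/p)\tfrac{p}{p-q}>-1$, which rearranges to the stated condition.

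The necessity/strictness half is only a plan, and it contains a concrete error that then manufactures a spurious difficulty. For a Hadamard-lacunary sum $f=\sum_k c_kW_{n_k}$ of homogeneous polynomials (for $d>1$ one must take Ryll--Wojtaszczyk polynomials $W_n$ with $\Vert W_n\Vert_\infty=1$ and $\Vert W_n\Vert_{L^2(\partial\mathbb{B}^d)}\gtrsim1$; slice functions of $z_1$ correspond, by the paper's Lemma \ref{ip}, to $A^p_{\beta+d-1}(\mathbb{D})$ and hence only detect the coarser threshold $\frac{\beta+d}{p}$, which misses part of the region $\frac{\beta+1}{p}\ge\frac{\gamma+1}{q}$ when $q<p$), the membership criteria are \emph{not} the $\ell^2$-type conditions $\sum_k|c_k|^2n_k^{-2(\beta+1)/p}<\infty$ that you write. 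Zygmund's $M_p\asymp M_2$ equivalence plus the dyadic-shell computation give $\Vert f\Vert_{A^p_\beta}^p\asymp\sum_j\bigl(\sum_{k\le j}|c_k|^2\bigr)^{p/2}2^{-j(\beta+1)}$, which for geometrically growing $|c_k|$ reduces to the $\ell^p$-condition $\sum_k|c_k|^pn_k^{-(\beta+1)}<\infty$, and correspondingly to an $\ell^q$-condition for $A^q_\gamma$. With the correct exponents the ``borderline'' $\frac{\beta+1}{p}=\frac{\gamma+1}{q}$ is not a special case at all: setting $\mu_k=|c_k|n_k^{-(\beta+1)/p}$ one needs only $\mu\in\ell^p\setminus\ell^q$, which exists precisely because $q<p$ (e.g.\ $\mu_k=k^{-1/q}$). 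Your perceived need for extra logarithmic tuning there is an artifact of the $\ell^2$ normalization, under which the two borderline conditions really would coincide. So: right strategy and a complete sufficiency proof, but the necessity rests on a mis-stated key equivalence, and the genuine technical content --- the sequence-norm equivalence for lacunary sums of Ryll--Wojtaszczyk polynomials on $\mathbb{B}^d$ --- is deferred rather than proved.
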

\begin{proof} See proof of  Theorem 70 of \cite{ZZ}. \end{proof}
\begin{lem}\cite{OC,ZZ} \label{ba} Suppose that $\beta>0,\gamma>-1,p\geq1,$  then $\mathcal{B}_{\beta} \subset A_\gamma^p$ if and only if   $\beta-1<\frac{1+\gamma}{p}.$ And in this case the inclusions are strict.
\end{lem}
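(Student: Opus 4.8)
The plan is to read the exponent $\frac{1+\gamma}{p}$ directly off the sharp boundary growth of weighted Bloch functions and match it to the integrability threshold of the weight $(1-|z|^2)^\gamma$. Two facts drive everything. First (see \S7.1 of \cite{Zhu}), for $\beta>1$ every $f\in\mathcal{B}_\beta$ satisfies $|f(z)|\le C\Vert f\Vert_{\mathcal{B}_\beta}(1-|z|^2)^{1-\beta}$, whereas for $\beta=1$ one has only logarithmic growth and for $\beta<1$ the space $\mathcal{B}_\beta$ consists of bounded functions. Second, $\int_{\mathbb{B}^d}(1-|z|^2)^{c}\,dv(z)<\infty$ if and only if $c>-1$. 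Granting these, \emph{sufficiency} is immediate: if $\beta\le1$ the condition $\beta-1<\frac{1+\gamma}{p}$ holds automatically and bounded (or logarithmically growing) functions lie in every $A_\gamma^p$ with $\gamma>-1$; if $\beta>1$, the growth estimate gives
\begin{equation*}
\int_{\mathbb{B}^d}|f(z)|^p(1-|z|^2)^\gamma\,dv(z)\le C\Vert f\Vert_{\mathcal{B}_\beta}^p\int_{\mathbb{B}^d}(1-|z|^2)^{\gamma-p(\beta-1)}\,dv(z),
\end{equation*}
and the right-hand integral converges exactly when $\gamma-p(\beta-1)>-1$, i.e. when $\beta-1<\frac{1+\gamma}{p}$; the closed graph theorem then makes the inclusion bounded.

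For \emph{necessity} I argue contrapositively: if $\beta-1\ge\frac{1+\gamma}{p}$ (so $\beta>1$) I must exhibit $f\in\mathcal{B}_\beta\setminus A_\gamma^p$, and here lies the only real difficulty. The obvious candidate $(1-z_1)^{-(\beta-1)}$ lies in $\mathcal{B}_\beta$ and saturates the growth bound along the radius toward $e_1=(1,0,\dots,0)$, but its singularity is concentrated at a single boundary point; by the standard estimate for $\int_{\mathbb{B}^d}(1-|z|^2)^\gamma|1-\langle z,w\rangle|^{-s}\,dv$ as $w\to e_1$, this function already belongs to $A_\gamma^p$ whenever $p(\beta-1)<d+1+\gamma$, a strictly larger range than wanted, so it cannot detect the sharp exponent. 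The fix is to spread the extremal growth over the whole sphere: I would take a Hadamard gap series $f=\sum_k c_k W_{n_k}$ with $W_{n_k}$ homogeneous polynomials of lacunary degree $n_k$, normalized in $L^2(\partial\mathbb{B}^d)$, and $c_k\asymp n_k^{\beta-1}$. A square-function/Khinchin estimate (equivalently the gap-series machinery of \cite{ZZ}) shows that such $f$ lies in $\mathcal{B}_\beta$ while $\int_{|z|=r}|f|^p\,d\sigma\asymp(1-r)^{-p(\beta-1)}$ for the surface measure $\sigma$; integrating against $(1-r)^\gamma$ diverges precisely when $p(\beta-1)\ge\gamma+1$, giving $f\notin A_\gamma^p$. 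Producing and controlling this spread-out extremal function is the technical heart of the lemma, since localized test functions alone integrate on a range governed by the $(d+1)$-dimensional geometry near one point rather than by radial integrability.

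Finally, \emph{strictness} in the admissible range is easy and does use a localized singularity. Pick $s$ with $\beta-1<s<\frac{d+1+\gamma}{p}$, which is possible since $\beta-1<\frac{1+\gamma}{p}\le\frac{d+1+\gamma}{p}$. Then $g(z)=(1-z_1)^{-s}$ grows like $(1-|z|^2)^{-s}$ along the radius to $e_1$, exceeding the $\mathcal{B}_\beta$ growth bound, so $g\notin\mathcal{B}_\beta$; yet its isolated singularity makes $\int_{\mathbb{B}^d}|1-z_1|^{-ps}(1-|z|^2)^\gamma\,dv<\infty$, so $g\in A_\gamma^p$. Hence $\mathcal{B}_\beta\subsetneq A_\gamma^p$ whenever the inclusion holds.
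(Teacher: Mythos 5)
The paper does not actually prove this lemma; it is quoted from \cite{OC} and Theorem 66 of \cite{ZZ}, so the only fair comparison is with the argument in those references, which your proposal essentially reconstructs. Your sufficiency and strictness arguments are correct and complete in outline: the pointwise growth bound $|f(z)|\le C\Vert f\Vert_{\mathcal{B}_\beta}(1-|z|^2)^{1-\beta}$ for $\beta>1$ gives the embedding exactly when $\gamma-p(\beta-1)>-1$, and the localized function $(1-z_1)^{-s}$ with $\max(0,\beta-1)<s<\frac{d+1+\gamma}{p}$ (you should add the constraint $s>0$ to cover $\beta\le1$, where non-membership in $\mathcal{B}_\beta$ comes from unboundedness rather than from the power growth bound) correctly witnesses strictness via the Forelli--Rudin integral estimate. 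Your diagnosis of why a single-point singularity cannot detect the threshold $\frac{1+\gamma}{p}$, and only sees $\frac{d+1+\gamma}{p}$, is exactly right and is the reason the references resort to gap series.

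The one place where the write-up, taken literally, does not yet work is the normalization of the homogeneous polynomials in the necessity step. If the $W_{n_k}$ are merely $L^2(\partial\mathbb{B}^d)$-normalized, the construction can fail for $d\ge2$: for instance the normalized polynomials $c_n\langle z,e_1\rangle^n$ have $\Vert W_n\Vert_\infty\asymp n^{(d-1)/2}$, and since membership of a Hadamard gap series $\sum c_kW_{n_k}$ in $\mathcal{B}_\beta$ is governed by $\sup_k n_k^{1-\beta}|c_k|\,\Vert W_{n_k}\Vert_\infty$, your choice $c_k\asymp n_k^{\beta-1}$ would then leave $\mathcal{B}_\beta$. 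What is needed is the Ryll--Wojtaszczyk polynomials, with $\Vert W_{n_k}\Vert_\infty=1$ and $\Vert W_{n_k}\Vert_{L^2(\partial\mathbb{B}^d)}\ge\delta(d)>0$ simultaneously; their existence is a nontrivial theorem, and the two-sided comparison $M_p(r,f)\asymp M_2(r,f)$ for such gap series (the substitute for Khinchin) is likewise a quoted result in \cite{ZZ}. You flag this step as the technical heart, which is fair, but the specific requirement is simultaneous $L^\infty$ and $L^2$ control, not $L^2$ normalization alone. With that correction the argument matches the standard proof.
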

\begin{proof} See proofs  in \cite{OC} or  Theorem 66 of \cite{ZZ}. \end{proof}

We also needs the following lemmas. 

\begin{lem}\label{kiq} If $  d+1<\alpha<d+2,  $ then $K_\alpha:L^{\infty}\rightarrow L^q$ is bounded if and only if $q<\frac{1}{\alpha-(d+1)}.$ 
\end{lem}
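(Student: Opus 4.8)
The plan is to prove the two implications separately, exploiting the inequality $|K_\alpha f|\le K_\alpha^+|f|$ for the sufficiency (upper) estimate but the genuine cancellation of $K_\alpha$ for the necessity. Throughout I take $q\ge 1$, the relevant range, and I record that since $d+1<\alpha<d+2$ the exponent $\alpha-d-1$ lies in $(0,1)$, so the threshold $\frac{1}{\alpha-(d+1)}$ is larger than $1$.

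For sufficiency, suppose $q<\frac{1}{\alpha-(d+1)}$. First I would bound pointwise $|K_\alpha f(z)|\le \|f\|_{L^\infty}\,K_\alpha^+\mathbf 1(z)$, where $K_\alpha^+\mathbf 1(z)=\int_{\mathbb{B}^d}|1-\langle z,w\rangle|^{-\alpha}\,dv(w)$. By the standard Bergman-kernel integral estimate (see \cite{Zhu}), since $\alpha>d+1$ one has $K_\alpha^+\mathbf 1(z)\le C(1-|z|^2)^{d+1-\alpha}$. Raising to the $q$-th power and integrating gives
\[
\|K_\alpha f\|_{L^q}^q\le C^q\|f\|_{L^\infty}^q\int_{\mathbb{B}^d}(1-|z|^2)^{q(d+1-\alpha)}\,dv(z).
\]
The last integral is finite precisely when $q(d+1-\alpha)>-1$, i.e. when $q<\frac{1}{\alpha-(d+1)}$, and in that case it yields a finite operator-norm bound, so $K_\alpha\colon L^\infty\to L^q$ is bounded.

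For necessity I must show $K_\alpha\colon L^\infty\to L^q$ fails to be bounded once $q\ge\frac{1}{\alpha-(d+1)}$. The obstacle to a naive approach is that the most obvious test function, $f\equiv 1$, is useless here: by the homogeneous-expansion formula (\ref{KR}) one has $K_\alpha\mathbf 1=1\in L^q$ for every $q$, so the cancellation in $K_\alpha$ completely hides the blow-up that $K_\alpha^+\mathbf 1$ exhibits. Instead I would argue through the exact description of the range. By Proposition \ref{kaa}, $K_\alpha(L^\infty)=\mathcal B_{\alpha-d}$, and since $\alpha-d\in(1,2)$ I may apply Lemma \ref{ba} with $\beta=\alpha-d>0$, $\gamma=0$, $p=q$: the inclusion $\mathcal B_{\alpha-d}\subset A_0^q$ holds if and only if $\alpha-d-1<\frac1q$. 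For $q\ge\frac{1}{\alpha-(d+1)}$ this inequality fails, so there exists a holomorphic $g\in\mathcal B_{\alpha-d}$ with $g\notin A_0^q=H(\mathbb{B}^d)\cap L^q$, hence $g\notin L^q$. Writing $g=K_\alpha f$ with $f\in L^\infty$ shows that $K_\alpha$ does not even map $L^\infty$ into $L^q$, so it cannot be bounded.

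The main difficulty is exactly this necessity step: because $K_\alpha^+\mathbf 1$ and $K_\alpha\mathbf 1$ behave so differently, one cannot transfer a lower bound from $K_\alpha^+$ to $K_\alpha$ through a single explicit test function, and the cleanest way I see to capture the sharp critical exponent $\frac{1}{\alpha-(d+1)}$ is to identify the range of $K_\alpha$ as the weighted Bloch space $\mathcal B_{\alpha-d}$ and invoke the sharp Bloch-to-Bergman embedding of Lemma \ref{ba}.
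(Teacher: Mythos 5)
Your proof is correct and follows essentially the same route as the paper: the sufficiency via the Rudin Proposition 1.4.10 estimate $K_\alpha^+\mathbf 1(z)\lesssim(1-|z|^2)^{d+1-\alpha}$, and the necessity via $K_\alpha(L^\infty)=\mathcal B_{\alpha-d}$ combined with the sharp Bloch-to-Bergman embedding of Lemma \ref{ba}. The only cosmetic difference is that the paper first reduces to the critical exponent $q=\frac{1}{\alpha-(d+1)}$ by H\"older before invoking Lemma \ref{ba}, whereas you apply the embedding criterion directly for all $q$ at or above the threshold; both are valid.
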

\begin{proof} We first  to show that $K_\alpha:L^{\infty}\rightarrow L^q$ is bounded if 
$q< \frac{1}{\alpha-(d+1)}.$ Then, for $f\in L^{\infty},$ by Proposition 1.4.10 of  \cite{Rud} and H\"older's inequality, it implies that  \begin{equation}\begin{split}\label{kf}
|K_{\alpha}f(z)|\leq \Vert f \Vert_{\infty}\int_{\mathbb{B}^d} \frac{1}{|1-\langle z,w \rangle|^\alpha}dv(w)\leq  C_{d,\alpha}  \Vert f\Vert_{\infty} (1-|z|^2)^{d+1-\alpha}, |z|\rightarrow 1^-,
\end{split}\end{equation} where $ C_{d,\alpha} $ is a constant.
The condition $q<\frac{1}{\alpha-(d+1)}$ means that $q((d+1)-\alpha )>-1.$ Then (\ref{kf}) implies that  $K_{\alpha}f(z)\in L^q$ and $K_\alpha:L^{\infty}\rightarrow L^q$ is bounded. Now we turn to prove that $K_\alpha:L^{\infty}\rightarrow L^q$ is unbounded if 
$q\geq \frac{1}{\alpha-(d+1)}.$ By H\"older's inequality, it is enough to prove that  $K_\alpha:L^{\infty}\rightarrow L^{\frac{1}{\alpha-(d+1)}}$ is unbounded. 
 It suffices to show that  $K_\alpha(L^{\infty})\not\subset L^{\frac{1}{\alpha-(d+1)}}.$ Since $K_\alpha(L^{\infty})=\mathcal{B}_{\alpha-d}$, it suffices to show that  $\mathcal{B}_{\alpha-d}\not\subset A_0^{\frac{1}{\alpha-(d+1)}}.$ Indeed, it is a fact from  Lemma \ref{ba}. \end{proof}
\begin{cor}\label{kp1} If $ d+1<\alpha<d+2,  $ then $K_\alpha:L^{p}\rightarrow L^1$ is bounded if and only if $p>\frac{1}{(d+2)-\alpha }.$ 
\end{cor}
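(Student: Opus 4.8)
The plan is to deduce this corollary from Lemma \ref{kiq} by duality, exploiting the self-adjointness of $K_\alpha$ that was already used (via Fubini's theorem) in the proof of Proposition \ref{cs}. The guiding observation is that the target space $L^1$ is dual to $L^\infty$ and the domain $L^p$ is dual to $L^{p'}$ (with $p'$ the conjugate exponent), so boundedness of $K_\alpha\colon L^p\to L^1$ ought to be exactly the dual statement of boundedness of $K_\alpha\colon L^\infty\to L^{p'}$, which is precisely what Lemma \ref{kiq} controls.

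First I would record the self-adjointness explicitly. Since $\alpha$ is real and $\overline{\langle w,z\rangle}=\langle z,w\rangle$, we have $\overline{k_\alpha(w,z)}=(1-\langle z,w\rangle)^{-\alpha}=k_\alpha(z,w)$, so Fubini's theorem yields $\langle K_\alpha f,g\rangle=\langle f,K_\alpha g\rangle$ for $f\in L^p$, $g\in L^\infty$. Using that the pairing $\langle\cdot,\cdot\rangle$ realizes $(L^1)^*=L^\infty$ and $(L^p)^*=L^{p'}$ for $1\leq p<\infty$, I would then establish the norm identity
\[
\Vert K_\alpha\Vert_{L^p\to L^1}=\sup_{\Vert f\Vert_p\leq1,\,\Vert g\Vert_\infty\leq1}|\langle K_\alpha f,g\rangle|=\sup_{\Vert g\Vert_\infty\leq1}\Vert K_\alpha g\Vert_{L^{p'}}=\Vert K_\alpha\Vert_{L^\infty\to L^{p'}},
\]
valid as an equality in $[0,\infty]$ for every $1\leq p<\infty$. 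In particular $K_\alpha\colon L^p\to L^1$ is bounded if and only if $K_\alpha\colon L^\infty\to L^{p'}$ is bounded, so both the sufficiency and the necessity directions are handled at once.

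Next I would invoke Lemma \ref{kiq}, which asserts that $K_\alpha\colon L^\infty\to L^{p'}$ is bounded precisely when $p'<\frac{1}{\alpha-(d+1)}$, and convert this into the stated condition on $p$. Writing $\beta=\alpha-(d+1)\in(0,1)$ and $p'=\frac{p}{p-1}$, the inequality $\frac{p}{p-1}<\frac{1}{\beta}$ is equivalent (for $p>1$) to $p>\frac{1}{1-\beta}=\frac{1}{(d+2)-\alpha}$. Since the hypothesis $d+1<\alpha<d+2$ forces $\frac{1}{(d+2)-\alpha}>1$, the threshold always lies in $(1,\infty)$, so no ambiguity arises at the endpoint $p=1$; moreover at the critical value $p=\frac{1}{(d+2)-\alpha}$ one computes $p'=\frac{1}{\alpha-(d+1)}$ exactly, which matches the strict inequality in Lemma \ref{kiq} and confirms that the boundary exponent is excluded.

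The argument is short because all the analytic content is carried by Lemma \ref{kiq}; the only point demanding care is the duality identity at the endpoint exponents. Concretely, one must verify that the bilinear pairing $\langle K_\alpha f,g\rangle$ is well defined and symmetric for $f\in L^p$, $g\in L^\infty$ (so that Fubini applies and the adjoint of $K_\alpha$ is again $K_\alpha$), and that supremizing over the unit ball of $L^\infty$ genuinely recovers $\Vert K_\alpha f\Vert_{L^1}$. Both facts are standard, and the self-adjointness was already exploited in Proposition \ref{cs}, so I expect no genuine obstacle; the swap of the two suprema is unconditional since each side equals the supremum of $|\langle K_\alpha f,g\rangle|$ over the product of the two unit balls.
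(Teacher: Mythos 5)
Your proof is correct in substance, and its necessity half is the same as the paper's (dualize to $K_\alpha:L^\infty\to L^{p'}$ and invoke Lemma \ref{kiq}), but the sufficiency half takes a genuinely different route. The paper does not argue purely by duality there: since the Banach adjoint of the bounded map $K_\alpha:L^\infty\to L^{p'}$ a priori lands only in $(L^\infty)^*$, which strictly contains $L^1$, the paper supplements the duality step with Proposition \ref{kka} (the identification $K_\alpha(L^p)=A^p_{p(\alpha-d-1)}$) and the Bergman-space embedding Lemma \ref{aa} to verify that the image actually lies in $A_0^1\subset L^1$. You sidestep this bidual issue entirely by working with the concrete bilinear pairing and the formula $\Vert h\Vert_{L^1}=\sup_{\Vert g\Vert_\infty\leq1}\vert\int hg\,dv\vert$; this is more elementary, avoids the holomorphic-function machinery, and handles both implications symmetrically.

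One point deserves more care than your closing remark suggests: the assertion that the norm identity holds unconditionally in $[0,\infty]$ for every $1\leq p<\infty$ rests on Fubini, which requires absolute convergence of $\int\int\vert k_\alpha(z,w)\vert\,\vert f(w)\vert\,\vert g(z)\vert\,dv(w)dv(z)$. In the sufficiency direction this is genuinely available: Proposition 1.4.10 of \cite{Rud} gives $K_\alpha^+\vert g\vert(w)\leq C\Vert g\Vert_\infty(1-\vert w\vert^2)^{d+1-\alpha}$, which pairs with $f\in L^p$ by H\"older precisely when $p>\frac{1}{(d+2)-\alpha}$. In the necessity direction you do not know this in advance, so to legitimize $\langle K_\alpha f,g\rangle=\langle f,K_\alpha g\rangle$ you should first take $f$ bounded and supported in $r\mathbb{B}^d$ with $r<1$, where the kernel is bounded and Fubini is trivial, and then pass to general $f$ by density; the paper's own adjointness step conceals the same content. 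With that routine repair the argument is complete.
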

\begin{proof} First, suppose that $p>\frac{1}{(d+2)-\alpha }.$ From Lemma \ref{kiq} and $K_\alpha$ is an adjoint operator, we know that $K_\alpha: L^p\rightarrow (L^{\infty})^*$ is bounded if $p>\frac{1}{(d+2)-\alpha }.$ Proposition \ref{kka} implies that $K_\alpha(L^{p})=A_{p(\alpha-d-1)}^p.$ Since $\frac{p(\alpha-d-1)+1}{p}<(\alpha-d-1)+(d+2)-\alpha=1,$ it follows by Lemma \ref{aa} that $A_{p(\alpha-d-1)}^p\subset A_0^1.$ Thus $K_\alpha(L^{p})\subset L^1.$ Note that $L^1\subset (L^{\infty})^*,$ it implies that $K_\alpha: L^p \rightarrow L^1$ is bounded.

{\noindent{}} Conversely, suppose that $K_\alpha:L^{p}\rightarrow L^1,p\neq \infty$ is bounded. Then $K_\alpha:L^\infty \rightarrow L^{p'}$ is bounded, where $p'=\frac{p}{p-1}.$ From Lemma \ref{kiq}, it implies that 
$\frac{p}{p-1}=p'<\frac{1}{\alpha-(d+1)},$ it means that $p>\frac{1}{(d+2)-\alpha }.$ Clearly the case of $p=\infty$ is trivial by Lemma \ref{kiq}.
\end{proof}
\begin{cor}\label{kl} If $ d+1<\alpha<d+2,$ then  \begin{enumerate}
\item  $K_\alpha ^+:L^{\infty}\rightarrow L^q$ is bounded if and only if $q<\frac{1}{\alpha-(d+1)};$ 
\item   $K_\alpha ^+:L^{p}\rightarrow L^1$ is bounded if and only if $p>\frac{1}{(d+2)-\alpha }.$ 
\end{enumerate}
\end{cor}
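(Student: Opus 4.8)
The plan is to read the two statements as describing the left edge $p=\infty$ and the bottom edge $q=1$ of the type diagram $G(K_\alpha^+)$, and to obtain three of the four implications for free from the results already proved for $K_\alpha$, using the pointwise domination $|K_\alpha f|\leq K_\alpha^+|f|$ together with the positivity and symmetry of the kernel.

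For the sufficiency in (1), I would observe that the pointwise bound (\ref{kf}) is already written with the modulus kernel $|1-\langle z,w\rangle|^{-\alpha}$, so it bounds $K_\alpha^+$ verbatim: for $f\in L^\infty$ we have $|K_\alpha^+ f(z)|\leq \Vert f\Vert_\infty\int_{\mathbb{B}^d}|1-\langle z,w\rangle|^{-\alpha}dv(w)\leq C_{d,\alpha}\Vert f\Vert_\infty(1-|z|^2)^{d+1-\alpha}$ near the boundary, and $q<\frac{1}{\alpha-(d+1)}$ is exactly $q((d+1)-\alpha)>-1$, which places the right-hand side in $L^q$. For the necessity in both (1) and (2) I would invoke $G(K_\alpha^+)\subset G(K_\alpha)$: boundedness of $K_\alpha^+:L^\infty\to L^q$ (resp.\ $K_\alpha^+:L^p\to L^1$) forces boundedness of $K_\alpha$ on the same pair, so Lemma \ref{kiq} (resp.\ Corollary \ref{kp1}) yields $q<\frac{1}{\alpha-(d+1)}$ (resp.\ $p>\frac{1}{(d+2)-\alpha}$).

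The one implication needing a new idea is the sufficiency in (2), since the holomorphic-space identification used in Corollary \ref{kp1} (through Proposition \ref{kka} and Lemma \ref{aa}) is unavailable for the non-holomorphic range of $K_\alpha^+$. Here I would use that $K_\alpha^+$ is self-adjoint, because $|1-\langle z,w\rangle|=|1-\langle w,z\rangle|$ makes its kernel symmetric, as recorded in Corollary \ref{cs1}. Writing $a=\alpha-(d+1)\in(0,1)$, the hypothesis $p>\frac{1}{(d+2)-\alpha}=\frac{1}{1-a}$ is equivalent by conjugation to $p'<\frac{1}{a}=\frac{1}{\alpha-(d+1)}$, so part (1) already gives that $K_\alpha^+:L^\infty\to L^{p'}$ is bounded. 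Then $L^1$–$L^\infty$ duality and Fubini give
\begin{equation*}\Vert K_\alpha^+ f\Vert_{L^1}=\sup_{\Vert g\Vert_\infty\leq1}\left|\int_{\mathbb{B}^d}(K_\alpha^+ f)g\,dv\right|=\sup_{\Vert g\Vert_\infty\leq1}\left|\int_{\mathbb{B}^d}f(K_\alpha^+ g)\,dv\right|\leq \Vert f\Vert_{L^p}\sup_{\Vert g\Vert_\infty\leq1}\Vert K_\alpha^+ g\Vert_{L^{p'}},\end{equation*}
and the last supremum is finite by part (1), which is the claimed bound.

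The main obstacle I anticipate is justifying the interchange of integration above, i.e.\ that Fubini applies to this symmetric positive kernel for $f\in L^p$, $g\in L^\infty$. I would settle it by estimating the auxiliary double integral $\int_{\mathbb{B}^d}\int_{\mathbb{B}^d}|f(w)||g(z)|\,|1-\langle z,w\rangle|^{-\alpha}dv(w)dv(z)\leq \Vert f\Vert_{L^p}\Vert K_\alpha^+|g|\Vert_{L^{p'}}<\infty$, again invoking part (1). The boundary case $p=\infty$ in (2) is automatic, since then the target exponent $q=1$ satisfies $1<\frac{1}{\alpha-(d+1)}$ and part (1) applies directly.
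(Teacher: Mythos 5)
Your proposal is correct and follows essentially the same route as the paper: the pointwise estimate from Rudin's Proposition 1.4.10 gives the sufficiency in (1), while the inclusion $G(K_\alpha^+)\subset G(K_\alpha)$ combined with Lemma \ref{kiq} and Corollary \ref{kp1} gives both necessities. The paper dismisses the sufficiency in (2) with ``the proof is similar to (1)''; your duality/Tonelli argument (equivalently, $\int_{\mathbb{B}^d} K_\alpha^+|f|\,dv=\int_{\mathbb{B}^d}|f|\,(K_\alpha^+1)\,dv\leq\Vert f\Vert_{L^p}\Vert K_\alpha^+1\Vert_{L^{p'}}$, finite precisely when $p'<\frac{1}{\alpha-(d+1)}$) is exactly the intended filling-in, and the positivity of the kernel legitimately justifies the interchange of integration.
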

\begin{proof} (1)  For $f\in L^{\infty},$ by Proposition 1.4.10 of  \cite{Rud} and H\"older's inequality, it implies that  \begin{equation}\begin{split}\label{kff}
|K_{\alpha}^+f(z)|\leq \Vert f\Vert_{\infty}\int_{\mathbb{B}^d} \frac{1}{|1-\langle z,w \rangle|^\alpha}dv(w)\leq  C_{d,\alpha} \Vert f\Vert_{\infty} (1-|z|^2)^{d+1-\alpha}, |z|\rightarrow 1^-,
\end{split}\end{equation} where $C_{d,\alpha} $ is a constant.
So, if  $q<\frac{1}{\alpha-(d+1)},$ i.e.  $q((d+1)-\alpha )>-1,$ then (\ref{kff}) implies that  $K_{\alpha}f(z)\in L^q$ and $K_\alpha ^+:L^{\infty}\rightarrow L^q$ is bounded. It means that $$\{ (0,\frac{1}{q}):\frac{1}{q}>\alpha-(d+1)\}\subset G(K_{\alpha}^+).$$
On the other hand, Lemma \ref{kiq} implies that point $ (0,\frac{1}{q})\in G(K_{\alpha}) $ if and only if $\frac{1}{q}>\alpha-(d+1).$ Combing with $G(K_{\alpha}^+)\subset G(K_{\alpha}),$ it follows that 
$ (0,\frac{1}{q})\in G(K_{\alpha}) $ if and only if $\frac{1}{q}>\alpha-(d+1).$ It leads the desired result.

{\noindent{(2)}} The proof is similar to (1).
\end{proof}

\begin{lem}\label{pqu} Suppose  that $ d+1<\alpha<d+2$ and $\frac{1}{q}\leq\frac{1}{p}+\alpha-(d+1),$ then $K_\alpha:L^{p}\rightarrow L^q$ is unbounded. 
\end{lem}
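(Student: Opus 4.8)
The plan is to show that the whole region $\frac1q\le\frac1p+\big(\alpha-(d+1)\big)$ of the type diagram lies outside $G(K_\alpha)$, splitting it according to the position of $(\frac1p,\frac1q)$ in the unit square $E$ and, on each piece, contradicting boundedness by means of the image characterization of Proposition \ref{kka} together with the sharp Bergman-space embedding of Lemma \ref{aa} (this is exactly the strategy already used to prove Corollary \ref{kp1}). Write $\beta=\alpha-(d+1)$, so $0<\beta<1$ and the hypothesis reads $\frac1q\le\frac1p+\beta$. The endpoint $p=\infty$ is immediate: there the hypothesis becomes $q\ge\frac{1}{\alpha-(d+1)}$, and Lemma \ref{kiq} already gives that $K_\alpha:L^\infty\to L^q$ is unbounded.

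Now suppose $1<p<\infty$. By Lemma \ref{3k} it suffices to argue on $A_0^p$, and by Proposition \ref{kka} one has $K_\alpha(L^p)=A_{p\beta}^p$. Hence, if $K_\alpha:L^p\to L^q$ were bounded, every function in $A_{p\beta}^p$ would be holomorphic and lie in $L^q$, i.e. $A_{p\beta}^p\subset A_0^q$ as sets, and I would derive a contradiction in three cases. First, if $q<p$ (equivalently $\frac1p<\frac1q\le\frac1p+\beta$), Lemma \ref{aa} applies with $0<q<p<\infty$ and says $A_{p\beta}^p\subset A_0^q$ iff $\frac{p\beta+1}{p}<\frac1q$, that is iff $\frac1q>\frac1p+\beta$; this is precisely excluded by the hypothesis, so the inclusion fails and $K_\alpha$ cannot map $L^p$ into $L^q$. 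Second, if $q=p$, I would exhibit a separating function directly: for $\gamma\in[\frac{d+1}{p},\frac{d+1}{p}+\beta)$ (a nonempty range since $\beta>0$) the function $h_\gamma(z)=(1-z_1)^{-\gamma}$ satisfies $h_\gamma\in A_{p\beta}^p\setminus A_0^p$ by the standard integral estimate of Proposition 1.4.10 of \cite{Rud}, so $A_{p\beta}^p\not\subset A_0^p$ and $K_\alpha:L^p\to L^p$ is unbounded. Third, if $q>p$, then since $v$ is a probability measure $\|\cdot\|_{L^p}\le\|\cdot\|_{L^q}$, so boundedness of $K_\alpha:L^p\to L^q$ would force boundedness of $K_\alpha:L^p\to L^p$, contradicting the previous case.

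The remaining endpoint $p=1$ (where Proposition \ref{kka} is not available) I would handle separately. Here $q\ge1=p$, so as above boundedness of $K_\alpha:L^1\to L^q$ would imply boundedness of $K_\alpha:L^1\to L^1$; I would rule the latter out by a concentration argument, testing against $f_a=v(D_a)^{-1}\mathbf 1_{D_a}$, where $D_a$ is a small ball about $a$ with $|a|\to1^-$. Then $\|f_a\|_{L^1}=1$ while $K_\alpha f_a$ is essentially $k_\alpha(\cdot,a)$ on the bulk, and the Rudin estimate $\int_{\mathbb{B}^d}|1-\langle z,a\rangle|^{-\alpha}\,dv(z)\asymp(1-|a|^2)^{d+1-\alpha}\to\infty$ (as $d+1-\alpha=-\beta<0$) forces $\|K_\alpha f_a\|_{L^1}\to\infty$, so $K_\alpha:L^1\to L^1$ is unbounded.

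I expect the crux to be the first case: it is precisely the sharpness of the embedding in Lemma \ref{aa} that locates the critical line $\frac1q=\frac1p+\beta$. Crude single-power test functions such as $h_\gamma$ only yield the weaker threshold $\frac1q-\frac1p<\frac{\beta}{d+1}$ and miss the full strip $\frac1p<\frac1q\le\frac1p+\beta$; thus the sharp Bergman-space embedding, rather than elementary test functions, is what resolves the high-dimensional critical exponent. Invoking it correctly, together with the mild endpoint bookkeeping at $p=1$ and $p=\infty$, is the main point.
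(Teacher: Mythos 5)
Your argument is correct, and its engine is the same as the paper's: reduce via Lemma \ref{3k} and Proposition \ref{kka} to the set-inclusion question $A^p_{p\beta}\subset A_0^q$ (with $\beta=\alpha-(d+1)$) and defeat it with the sharp embedding of Lemma \ref{aa}, handling $p=\infty$ by Lemma \ref{kiq}. Where you diverge is in the bookkeeping. The paper first shrinks the whole region to the critical line $\frac1q=\frac1p+\beta$ using the nesting $L^q\subset L^{q'}$ for $q\ge q'$, so that only the single case $q<p$ (plus the endpoints, already settled in Lemma \ref{kiq} and Corollary \ref{kp1}) survives and Lemma \ref{aa} applies verbatim. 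You instead keep the full region and split by the sign of $\frac1q-\frac1p$, which forces you to treat $q=p$ and $q>p$ separately: your explicit witness $h_\gamma(z)=(1-z_1)^{-\gamma}$ with $\gamma\in[\frac{d+1}{p},\frac{d+1}{p}+\beta)$ correctly shows $A^p_{p\beta}\not\subset A_0^p$, and the norm monotonicity $\|\cdot\|_{L^p}\le\|\cdot\|_{L^q}$ correctly reduces $q>p$ to $q=p$ --- but both of these cases would follow at once from the paper's one-line reduction to the critical line (or, for $\frac1p\ge 1-\beta$, to the $q=1$ endpoint). Likewise your concentration argument at $p=1$ is sound as sketched (one needs the uniform comparability of $k_\alpha(z,w)$ to $k_\alpha(z,a)$ for $w$ in a ball of radius comparable to $1-|a|$), but it reproves what Corollary \ref{kp1} already gives by duality from Lemma \ref{kiq}. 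So your proof is a valid, slightly longer and more self-contained variant; its one genuine added value is the closing observation, which matches the paper's own emphasis, that single-power test functions only reach the threshold $\frac1q-\frac1p<\frac{\beta}{d+1}$ and that the full sharp embedding of Lemma \ref{aa} is what locates the true critical line.
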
 
\begin{proof} By the continuous embedding of $L$-integrable spaces, it suffices to show that  $K_\alpha:L^{p}\rightarrow L^q$ is unbounded if  $ d+1<\alpha<d+2,\frac{1}{q}=\frac{1}{p}+\alpha-(d+1).$ The cases of $p=\infty $ or  $q=1$ had been proved in Lemma  \ref{kiq} and Corollary \ref{kp1}. For case   of $1<p,q<\infty,$ it suffices to show that $K_\alpha(L^p)\not\subset L^q.$  On the other hand, Proposition \ref{kka} shows that $K_\alpha(L^p)=A_{p(\alpha-d-1)}^p,$ a holomorphic function space.  Thus, it suffices to show that \begin{equation}\label{kppq} K_\alpha(L^p)=A_{p(\alpha-d-1)}^p\not\subset A_0^q.\end{equation}
         Since  $\frac{p(\alpha-d-1)+1}{p} =\frac{1}{q},$  it follows that  (\ref{kppq}) holds by Lemma \ref{aa}. It completes the proof.
\end{proof}
{\noindent{\bf{Proof of Theorem 1.}}

{\noindent{$Step~1$.} To prove that (1)$\Leftrightarrow$(2)$\Leftrightarrow$(4).

First, we prove that (1) is equivalent to (4).  As mentioned before, it is equivalent to prove that $G(K_\alpha)$ is exactly the triangle region $D_1\subset E$  which determined by the equations in (4) of Theorem 1, namely $G(K_\alpha)=D_1$. Lemma \ref{kiq}, Corollary \ref{kp1} and the convexity of $G(K_\alpha)$ imply that $ D_1\subset G(K_\alpha).$ On the other hand, Lemma \ref{pqu} and the convexity of $G(K_\alpha)$ imply that $E-D_1\subset E -G(K_\alpha),$ it follows that  $G(K_\alpha)\subset D_1.$ Thus $G(K_\alpha)=D_1.$ Now we turn to prove that (2) is equivalent to (4), it is equivalent to prove that
         $G(K_\alpha^+)=D_1.$ Corollary \ref{kl} and the convexity of $G(K_\alpha^+)$ implies that $D_1\subset G(K_\alpha^+).$ Combing the fact  that $G(K_\alpha^+)\subset G(K_\alpha)=D_1,$ then $G(K_\alpha^+)=D_1.$ It completes the proof. 
         
 {\noindent{$Step~2$.} To prove that (1)$\Leftrightarrow$(3).        
 
Since compact operators must be bounded, it suffices to prove that 
$$K_\alpha:L^p\rightarrow L^q ~\text{is~ compact},~\text{if } (\frac{1}{p},\frac{1}{q})\in G(K_\alpha).$$ 
We first prove the following claim.  

{\noindent{$Claim:$}} $K_\alpha:L^\infty\rightarrow L^q$ is compact if and only if $q<\frac{1}{\alpha-(d+1)}.$  

If  $K_\alpha:L^\infty\rightarrow L^q$ is compact, is immediate from  Corollary \ref{kp1} that $q<\frac{1}{\alpha-(d+1)}.$ Now we prove the reverse, that is, to prove that   $K_\alpha:L^\infty\rightarrow L^q$ is compact if $q<\frac{1}{\alpha-(d+1)}.$ We need to show that for any bounded sequence  in  $L^\infty$, there is a subsequence such that whose image under $K_\alpha$  converges in $L^q.$ Suppose that $\{ f_n\}\in L^\infty$ is an arbitrary bounded sequence and $K$ is an arbitrary compact subset of $\mathbb{B}^d.$ Moreover, we assume that $\Vert f_n\Vert_\infty \leq C$ for any $n\geq 1,$ where $C$ is a positive constant. 
Then we obtain \begin{equation}\begin{split} \label{} \sup_{z\in K} \vert K_\alpha f_n(z)\vert&\leq\Vert f_n\Vert_\infty \sup_{z\in K} \int _{\mathbb{B}^d}\frac{1}{\vert 1-\langle z,w\rangle\vert^{\alpha}}dv(w)\notag\\
&\leq \Vert f_n\Vert_\infty \sup_{z\in K}\frac{1}{(1-\vert z\vert)^{\alpha}}< \infty.\end{split}\end{equation}  Combing with that the image of $K_\alpha$ is holomorphic, it implies that  $\{ K_\alpha f_n\}$ is a normal family. Hence $\{ f_n\}$ has a 
subsequence $\{f_{n_j} \}$ such that $K_\alpha f_{n_j}$ converges uniformly on compact subsets of $\mathbb{B}^d$ to a holomorphic function $g$. By Fatou's Lemma and boundedness of $K_\alpha$, it follows that 
\begin{equation}\label{Fa}\int_{\mathbb{B}^d} \vert g\vert^q dv\leq \varliminf_{j\rightarrow\infty}\int_{\mathbb{B}^d} \vert K_\alpha f_{n_j}\vert ^qdv\leq \Vert K_\alpha\Vert_{L^\infty\rightarrow L^q}^q \varliminf_{j\rightarrow\infty}\Vert f_{n_j}\Vert_\infty ^q<\infty.\end{equation}
It means that $g\in L^q.$ Now we prove that there exists positive function  $g_1\in L^q $ such that $\vert K_\alpha f_{n_j}\vert \leq g_1.$ We first observe that  Proposition 1.4.10 of \cite{Rud} and the condition $q<\frac{1}{\alpha-(d+1)}$ imply that $$\left (\int _{\mathbb{B}^d}\frac{1}{\vert 1-\langle z,w\rangle\vert^{\alpha}}dv(w)\right)^{q} \in L^1.$$
Then by  easy estimation, it  yields that 
\begin{equation}\begin{split} \label{L1}\vert K_\alpha f_{n_j}(z)\vert &\leq\Vert f_{n_j}\Vert _\infty \int _{\mathbb{B}^d}\frac{1}{\vert 1-\langle z,w\rangle\vert^{\alpha}}dv(w)\\
&\leq C\int _{\mathbb{B}^d}\frac{1}{\vert 1-\langle z,w\rangle\vert^{\alpha}}dv(w).\\
 \end{split}\end{equation}
Thus (\ref{L1}) shows that it is enough to take $g_1=C\int _{\mathbb{B}^d}\frac{1}{\vert 1-\langle z,w\rangle\vert^{\alpha}}dv(w).$ Combing (\ref{Fa})  with (\ref{L1}), it implies that $$ \vert K_\alpha f_{n_j}-g\vert^q\leq( g_1+\vert g\vert)^q\in L^1,\forall j\geq1.$$ 
By dominated convergence theorem, it  gives that 
$$\lim_{ j\rightarrow\infty} \Vert K_\alpha f_{n_j}-g \Vert_q=\lim_{j\rightarrow\infty}\left (\int _{\mathbb{B}^d}\vert  K_\alpha f_{n_j}-g \vert^q dv\right)^\frac{1}{q}=\left (\int _{\mathbb{B}^d}\lim_{j\rightarrow\infty} \vert  K_\alpha f_{n_j}-g\vert^q dv\right)^\frac{1}{q}=0,$$ and completes the proof the claim.

{\noindent}Combing the last claim  with  facts that an operator is compact if and only if its  adjoint operator  is still compact, thus we  get that $K_\alpha: L^p\rightarrow L^1$ is compact if and only if $p<\frac{1}{d+2-\alpha}.$ Then by the following Lemma \ref{cpt}, an  interpolation result of the  compact operators, it implies that  $K_\alpha:L^p\rightarrow L^q ~\text{is~ compact}~\text{if } (\frac{1}{p},\frac{1}{q})\in G(K_\alpha).$
 \qed
 \begin{lem} \cite{CP,Kr}\label{cpt}  Suppose that $1\leq p_1,p_2,q_1,q_2\leq\infty$ and $q_1\neq\infty.$  If a linear operator  $T$ such that $T :L^{p_1}\rightarrow L^{q_1}$ is bounded and $T :L^{p_2}\rightarrow L^{q_2}$ is compact, then $T :L^{p}\rightarrow L^{q}$ is compact, if   there exists  $\theta\in (0,1)$ such that  $$\frac{1}{p}=\frac{\theta}{p_1}+\frac{1-\theta}{p_2}, \frac{1}{q}=\frac{\theta}{q_1}+\frac{1-\theta}{q_2}.$$ 
\end{lem}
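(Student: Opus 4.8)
The plan is to recognize this as the classical Krasnoselskii interpolation theorem for compact operators and to reprove it by a level-set truncation combined with the log-convexity of the $L^r$-norms. Since compact operators are bounded, it suffices to show that $T$ carries the unit ball of $L^p$ to a totally bounded subset of $L^q$; equivalently, that every sequence $(f_n)$ with $\sup_n\|f_n\|_{L^p}\le 1$ has a subsequence whose image under $T$ converges in $L^q$. I would first note that interpolating the two hypotheses through Lemma \ref{int} already shows $T\colon L^p\to L^q$ is bounded, so $(Tf_n)$ is at least bounded in $L^q$.

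The heart of the argument is a truncation. For $\lambda>0$ split $f_n = f_n^\lambda + g_n^\lambda$, where $f_n^\lambda = f_n\chi_{\{|f_n|\le\lambda\}}$ and $g_n^\lambda = f_n\chi_{\{|f_n|>\lambda\}}$, ordering $p_1,p_2$ so that the exponent of the compact operator sits on the appropriate side of $p$; say $p_1<p<p_2$. On $\{|f_n|\le\lambda\}$ one has $|f_n^\lambda|^{p_2}\le\lambda^{p_2-p}|f_n|^p$, so the main part $(f_n^\lambda)_n$ is bounded in $L^{p_2}$ for each fixed $\lambda$; by compactness of $T\colon L^{p_2}\to L^{q_2}$ the set $(Tf_n^\lambda)_n$ is precompact in $L^{q_2}$. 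On $\{|f_n|>\lambda\}$ one has $|g_n^\lambda|^{p_1}\le\lambda^{p_1-p}|f_n|^p$, whence $\sup_n\|g_n^\lambda\|_{L^{p_1}}\le\lambda^{(p_1-p)/p_1}\to 0$ as $\lambda\to\infty$; since $T\colon L^{p_1}\to L^{q_1}$ is bounded, the tails $(Tg_n^\lambda)_n$ are small in $L^{q_1}$, uniformly in $n$.

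To assemble these I would use the Littlewood (log-convexity) inequality $\|h\|_{L^q}\le \|h\|_{L^{q_1}}^{\theta}\|h\|_{L^{q_2}}^{1-\theta}$. Together with the fact that $(Tf_n^\lambda)_n$ is also bounded in $L^{q_1}$, this upgrades precompactness in $L^{q_2}$ to precompactness of $(Tf_n^\lambda)_n$ in $L^q$ for each fixed $\lambda$: a sequence converging in $L^{q_2}$ and bounded in $L^{q_1}$ converges in the intermediate $L^q$. A diagonal extraction over a sequence $\lambda_k\to\infty$ then produces a single subsequence along which every $Tf_n^{\lambda_k}$ converges in $L^q$, and since $\sup_n\|Tg_n^{\lambda_k}\|$ can be driven to $0$, the full sequence $(Tf_n)$ is a uniform limit of precompact sets and hence totally bounded in $L^q$.

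The step I expect to be the genuine obstacle is this last assembly: converting the endpoint smallness of the tails $(Tg_n^\lambda)$ into smallness in $L^q$, and guaranteeing that $(Tf_n^\lambda)_n$ is indeed bounded in $L^{q_1}$. Both points are automatic on the finite-measure ball $\mathbb{B}^d$, where $L^p\hookrightarrow L^{p_1}$ for $p_1<p$ and where convergence in $L^{q_2}$ together with boundedness in $L^{q_1}$ forces convergence in $L^q$; but on a general measure space the small-value part $f_n^\lambda$ need not be controlled in $L^{p_1}$, and the truncation must be combined with a splitting of the domain into a set of finite measure and its complement. It is precisely here that the hypothesis $q_1\neq\infty$ enters, through the absolute continuity of the $L^{q_1}$-norm, to exclude the $L^\infty$-type counterexamples to compactness interpolation. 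Since the statement is only invoked with $q_1$ finite (indeed with $q_1=1$ in the proof of Theorem \ref{thm1}), I would, for the purposes of this paper, carry out the finite-measure version in full and cite \cite{CP,Kr} for the general case.
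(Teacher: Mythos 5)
The paper offers no proof of this lemma at all --- it is imported from \cite{CP,Kr} --- so your argument has to stand entirely on its own. The truncation itself and the treatment of the main parts are fine: on the probability space $\mathbb{B}^d$ the functions $f_n^{\lambda}=f_n\chi_{\{|f_n|\le\lambda\}}$ are bounded in every $L^r$, so $\{Tf_n^{\lambda}\}_n$ is precompact in $L^{q_2}$ and bounded in $L^{q_1}$, and the log-convexity inequality $\|h\|_{L^q}\le\|h\|_{L^{q_1}}^{\theta}\|h\|_{L^{q_2}}^{1-\theta}$ does upgrade this to precompactness in $L^q$. (One small correction: you cannot ``order $p_1,p_2$ so that the compact exponent sits on the appropriate side of $p$'' --- which endpoint carries the compact operator is part of the hypothesis --- but since $f_n^\lambda$ lies in every $L^r$ on a finite measure space this does not hurt the main part.)

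The genuine gap is in the tails, exactly where you flagged it, and it is \emph{not} automatic on the finite-measure ball. The truncation makes $g_n^{\lambda}$ small only in $L^{p_j}$ for the one index $j$ with $p_j<p$; there is no control of $g_n^{\lambda}$ in the other endpoint space (it is the \emph{large-value} part of an $L^p$ function), so log-convexity cannot be applied to $Tg_n^{\lambda}$, and what you actually obtain is smallness of $Tg_n^{\lambda}$ in $L^{q_j}$ only. That transfers to smallness in $L^q$ precisely when $q\le q_j$, which fails in general: for $T$ bounded on $L^1\to L^1$ and compact on $L^2\to L^2$ one has $j=1$ and $q_1=1<q<2$, and a set that is small in $L^1$ while merely bounded in $L^q$ need not be small in $L^q$ (tall thin spikes $c\chi_A$ with $v(A)\to0$ keep $\|\cdot\|_{L^q}$ of order one while $\|\cdot\|_{L^1}\to0$). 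The configuration in which this paper invokes the lemma is of exactly this bad type: the endpoint with the smaller domain exponent is $L^{p_2}\to L^1$, so your tails are small only in $L^1$ while the target is $L^q$ with $q>1$. Combining the $L^{q_j}$-smallness with the Riesz--Thorin bound $\|Tg_n^\lambda\|_{L^q}\le 2\|T\|_{L^p\to L^q}$ only yields compactness into $L^{\tilde q}$ for every $\tilde q<q$, a strictly weaker statement. Closing the gap requires a different mechanism --- for instance approximating $T$ by the finite-rank operators $E_NT$ (or $TE_N$), with $E_N$ the conditional expectations onto a refining sequence of finite measurable partitions: these are uniformly bounded on the bounded couple, converge to $T$ in operator norm on the compact couple (compactness plus the strong convergence $E_N\to I$, which is where the finiteness restriction on an endpoint exponent such as $q_1\ne\infty$ enters), and then Riesz--Thorin gives $\|T-E_NT\|_{L^p\to L^q}\to0$. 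As written, your argument does not replace the citation to \cite{CP,Kr}.
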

\begin{rem} The compactness of $K_\alpha: L^p\rightarrow L^q$ for $1<p,q<\infty$ can be also proved by the Carleson type measure theory on Bergman spaces, see definition for \cite{ZZ,Zhu}. This strategy will be adopted under appropriate circumstances in Section 5.\end{rem}

\section{Proof of Theorem 2 }
In this section we give the proof of Theorem 2. We  first establish  several lemmas.  Denote $k_\alpha(z,w)=\frac{1}{(1-\langle z,w\rangle)^\alpha}, k_\alpha^+(z,w)=\frac{1}{|1-\langle z,w\rangle|^\alpha},z,w\in \mathbb{B}^d.$ Then $k_\alpha,k_\alpha^+$ are integral kernel functions of integral operators $K_\alpha,K_\alpha^+$ respectively.
\begin{lem}\label{ld1} If $0<\alpha\leq d+1,$ then \begin{enumerate}
\item $K_\alpha: L^1\rightarrow L^q$ is bounded if and only if $q<\frac{d+1}{\alpha};$
\item $K_\alpha^+: L^1\rightarrow L^q$ is bounded if and only if $q<\frac{d+1}{\alpha}.$
\end{enumerate}
\end{lem}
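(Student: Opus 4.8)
The plan is to prove both statements simultaneously by exploiting the pointwise domination $|K_\alpha f|\le K_\alpha^+|f|$, which gives $G(K_\alpha^+)\subset G(K_\alpha)$. Since the two claimed thresholds coincide, it suffices to establish two facts: (i) $K_\alpha^+:L^1\to L^q$ is bounded when $q<\frac{d+1}{\alpha}$ (this forces $K_\alpha:L^1\to L^q$ bounded as well), and (ii) $K_\alpha:L^1\to L^q$ is unbounded when $q\ge\frac{d+1}{\alpha}$ (this forces $K_\alpha^+$ unbounded too, again by $G(K_\alpha^+)\subset G(K_\alpha)$). Moreover, because $\mathbb{B}^d$ carries a finite measure, $L^{q_1}\subset L^{q_2}$ continuously whenever $q_1\ge q_2$; hence in (ii) it is enough to treat the endpoint $q_0=\frac{d+1}{\alpha}$, unboundedness for all larger $q$ then following formally.

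For the sufficiency (i), I would apply Minkowski's integral inequality to write
\[
\|K_\alpha^+ f\|_{L^q}\le \int_{\mathbb{B}^d}\Big(\int_{\mathbb{B}^d}|k_\alpha^+(z,w)|^q\,dv(z)\Big)^{1/q}|f(w)|\,dv(w)\le \Big(\sup_{w}\|k_\alpha^+(\cdot,w)\|_{L^q}\Big)\|f\|_{L^1}.
\]
It then remains to bound $\sup_w\int_{\mathbb{B}^d}|1-\langle z,w\rangle|^{-\alpha q}\,dv(z)$, which is exactly the regime governed by the standard kernel estimate (Proposition 1.4.10 of \cite{Rud}): this supremum is finite precisely when $\alpha q<d+1$, i.e. when $q<\frac{d+1}{\alpha}$. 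This closes (i).

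The necessity (ii) is the heart of the matter and the main obstacle, because to conclude unboundedness of $K_\alpha^+$ I must first prove it for $K_\alpha$, whose kernel is complex and may produce cancellation. I would test against concentrated, $L^1$-normalized bumps $f_a=v(B(a,\rho))^{-1}\chi_{B(a,\rho)}$ anchored at a point $a$ with $|a|\to 1$, using two separated scales: a radius $\rho=\tfrac12(1-|a|)$, so that $B(a,\rho)\subset\mathbb{B}^d$, and a cut $\lambda=(1-|a|^2)^{1/2}$. On the region $\Omega_\lambda=\{z:|1-\langle z,a\rangle|\ge\lambda\}$ one has, for $w\in B(a,\rho)$, $\big|\tfrac{1-\langle z,w\rangle}{1-\langle z,a\rangle}-1\big|\le \rho/\lambda\to0$; this suppresses the potential cancellation, since it forces $k_\alpha(z,w)=k_\alpha(z,a)(1+o(1))$ uniformly in $w$, whence $|K_\alpha f_a(z)|\ge \tfrac12|k_\alpha(z,a)|$ on $\Omega_\lambda$.

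Finally, at the endpoint exponent $q_0=\frac{d+1}{\alpha}$ (so that $\alpha q_0=d+1$), I would estimate
\[
\|K_\alpha f_a\|_{L^{q_0}}^{q_0}\ge 2^{-q_0}\int_{\Omega_\lambda}|1-\langle z,a\rangle|^{-(d+1)}\,dv(z)\gtrsim \log\frac{1}{\lambda}=\tfrac12\log\frac{1}{1-|a|^2}\longrightarrow\infty,
\]
using once more the standard distribution estimate for $|1-\langle z,a\rangle|$, while $\|f_a\|_{L^1}=1$. This exhibits unboundedness of $K_\alpha:L^1\to L^{q_0}$, and the two reductions above propagate it to all $q\ge\frac{d+1}{\alpha}$ and to $K_\alpha^+$, completing the proof. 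The delicate point throughout is the matching of scales $\rho\ll\lambda$ together with $\lambda\gg 1-|a|^2$, which is precisely what simultaneously kills the cancellation and retains the logarithmic blow-up.
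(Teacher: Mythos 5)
Your proof is correct, but your necessity argument follows a genuinely different, more self-contained route than the paper's. The paper disposes of both directions at once by citing the exact operator-norm identity $\Vert K_\alpha\Vert_{L^1\to L^q}=\sup_{z}\Vert k_\alpha(z,\cdot)\Vert_{L^q}$ (Proposition 5.2 of Tao's notes) and then reading off finiteness of the right-hand side from Rudin's Proposition 1.4.10; your Minkowski step is precisely the ``$\le$'' half of that identity, so the sufficiency arguments coincide. For necessity, where the paper gets the ``$\ge$'' half for free from the citation, you instead test $K_\alpha$ (not $K_\alpha^+$) against normalized bumps anchored at $a\to\partial\mathbb{B}^d$ and suppress the cancellation of the complex kernel via the ratio bound $|\langle z,a-w\rangle|/|1-\langle z,a\rangle|\le\rho/\lambda\to 0$ on $\Omega_\lambda$; this is sound, and the scale choice $\rho=\tfrac12(1-|a|)\ll\lambda=(1-|a|^2)^{1/2}\ll 1$ does leave half of the logarithm $\log\frac{1}{1-|a|^2}$ in the truncated integral. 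One point to tighten: the lower bound $\int_{\Omega_\lambda}|1-\langle z,a\rangle|^{-(d+1)}dv\gtrsim\log(1/\lambda)$ should be obtained by summing the two-sided volume estimate $v\{z:|1-\langle z,a\rangle|<t\}\approx t^{d+1}$ (valid for $t\gtrsim 1-|a|$) over dyadic shells contained in $\Omega_\lambda$, rather than by subtracting the near-region integral from Rudin's asymptotic, since the implied constants in those two quantities need not match. What your route buys is independence from the exact $L^1\to L^q$ norm formula and an explicit witness of endpoint unboundedness; what the paper's route buys is brevity and the sharp norm identity itself, which it reuses in the dual Lemma 4.2 and in the norm estimates of Section 6.
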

\begin{proof} (1) From Proposition 5.2 of \cite{Tao}, we know that  \begin{equation} \Vert K_\alpha\Vert_{ L^1\rightarrow L^q}=\sup_{z\in\mathbb{B}^d} \Vert k_\alpha (z,\cdot)\Vert_{L^q} =\sup_{z\in\mathbb{B}^d}\left (\int \frac{dv(w)}{|1-\langle z,w\rangle|^{q\alpha}}\right)^{\frac{1}{q}}\notag\\
 \end{equation}
Then, combing  with Proposition 1.4.10 of \cite{Rud}, we know that $\Vert K_\alpha\Vert_{ L^1\rightarrow L^q}<\infty$ is equivalent to $q\alpha < d+1.$ It leads the desired result.

(2) It is similar to (1).
\end{proof}
Dually, we have the following lemma.
\begin{lem}\label{kllq} If $0<\alpha\leq d+1,$ then \begin{enumerate}
\item $K_\alpha: L^p\rightarrow L^\infty$ is bounded if and only if $p>\frac{d+1}{(d+1)-\alpha};$
\item $K_\alpha^+: L^p\rightarrow L^\infty$ is bounded if and only if $p>\frac{d+1}{(d+1)-\alpha}.$
\end{enumerate}
\end{lem}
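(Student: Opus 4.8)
The plan is to prove this lemma dually to Lemma \ref{ld1}, reducing the $L^p$-$L^\infty$ boundedness to a single integral estimate handled by Rudin's proposition. The key observation is that for an operator whose target is $L^\infty$, the operator norm is governed by the $L^{p'}$-size of the kernel slice in the \emph{output} variable, which is exactly the dual of the identity used for $L^1\to L^q$ in Lemma \ref{ld1}. Concretely, for $1\le p\le\infty$ with conjugate exponent $p'$, H\"older's inequality gives, for every $z\in\mathbb{B}^d$,
$$|K_\alpha f(z)|\le \int_{\mathbb{B}^d}|k_\alpha(z,w)||f(w)|\,dv(w)\le \Vert k_\alpha(z,\cdot)\Vert_{L^{p'}}\Vert f\Vert_{L^p},$$
so that $\Vert K_\alpha\Vert_{L^p\to L^\infty}\le \sup_{z\in\mathbb{B}^d}\Vert k_\alpha(z,\cdot)\Vert_{L^{p'}}$; the reverse inequality follows by testing against $f(w)=\overline{k_\alpha(z_0,w)}\,|k_\alpha(z_0,w)|^{p'-2}$ for a near-extremal $z_0$, which saturates H\"older (one checks $\Vert f\Vert_{L^p}^{p}=\Vert k_\alpha(z_0,\cdot)\Vert_{L^{p'}}^{p'}$ and $K_\alpha f(z_0)=\Vert k_\alpha(z_0,\cdot)\Vert_{L^{p'}}^{p'}$), showing the two quantities are in fact equal. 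This is the $L^p$-$L^\infty$ counterpart of Proposition 5.2 of \cite{Tao}.

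Next I would evaluate the right-hand side. Since $|k_\alpha(z,w)|=|1-\langle z,w\rangle|^{-\alpha}$, one has
$$\Vert k_\alpha(z,\cdot)\Vert_{L^{p'}}^{p'}=\int_{\mathbb{B}^d}\frac{dv(w)}{|1-\langle z,w\rangle|^{p'\alpha}},$$
and Proposition 1.4.10 of \cite{Rud} shows this is bounded uniformly in $z$ precisely when $p'\alpha<d+1$. Using $p'=\frac{p}{p-1}$, the inequality $p'\alpha<d+1$ rearranges to $p(d+1-\alpha)>d+1$, i.e. $p>\frac{d+1}{d+1-\alpha}$, which is the asserted threshold; this proves (1). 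Because the kernel of $K_\alpha^+$ has exactly the same modulus, $|k_\alpha(z,w)|=k_\alpha^+(z,w)$, the identical computation yields (2). Alternatively, $K_\alpha^+$ is self-adjoint, so (2) is equivalent to the boundedness of $K_\alpha^+:L^1\to L^{p'}$, which by Lemma \ref{ld1}(2) holds iff $p'<\frac{d+1}{\alpha}$, the same condition rewritten.

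The only genuine subtlety, and the step I would treat most carefully, is the $L^\infty$ endpoint: since $(L^\infty)^*$ is strictly larger than $L^1$, I avoid invoking abstract duality between $L^p\to L^\infty$ and $L^1\to L^{p'}$, and instead establish the operator-norm identity $\Vert K_\alpha\Vert_{L^p\to L^\infty}=\sup_z\Vert k_\alpha(z,\cdot)\Vert_{L^{p'}}$ directly by the H\"older-plus-extremizer argument above, which is valid for all $1\le p\le\infty$ (including $p=\infty$, $p'=1$, where the extremizer has modulus one). I would also record the boundary case $\alpha=d+1$ separately: there $\frac{d+1}{d+1-\alpha}=\infty$, the requirement $p'\alpha<d+1$ forces $p'<1$, which is impossible, so $K_{d+1},K_{d+1}^+:L^p\to L^\infty$ are bounded for no $p$, consistent with the stated threshold.
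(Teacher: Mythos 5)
Your proof is correct and follows essentially the same route as the paper: the paper cites Proposition 5.4 of \cite{Tao} for the identity $\Vert K_\alpha\Vert_{L^p\to L^\infty}=\sup_{z}\Vert k_\alpha(z,\cdot)\Vert_{L^{p'}}$ and then applies Proposition 1.4.10 of \cite{Rud}, exactly as you do. Your only addition is to prove the cited norm identity directly via H\"older and the extremizing test function, which is a correct (and self-contained) substitute for the reference.
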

\begin{proof}(1)  From Proposition 5.4 of \cite{Tao}, we know that  \begin{equation}\label{eqk} \Vert K_\alpha\Vert_{ L^p\rightarrow L^\infty}=\sup_{z\in\mathbb{B}^d} \Vert k_\alpha (z,\cdot)\Vert_{L^{p'}} =\sup_{z\in\mathbb{B}^d}\left(\int \frac{dv(w)}{|1-\langle z,w\rangle|^{\frac{p\alpha}{p-1}}}\right)^{\frac{p-1}{p}}.\end{equation} Then, combing  with Proposition 1.4.10 of \cite{Rud}, we know that $\Vert K_\alpha\Vert_{ L^p\rightarrow L^\infty}<\infty$ is equivalent to $\frac{p\alpha}{p-1} < d+1.$ It leads the desired result.

(2) It is similar to (1).
\end{proof}
  \begin{lem}\label{lpq} If $1<p<\frac{d+1}{d+1-\alpha},$ then $K_\alpha: L^p\rightarrow L^q$ is bounded if and only if $\frac{1}{q}\geq \frac{1}{p}+ \frac{\alpha}{d+1}-1.$ 
\end{lem}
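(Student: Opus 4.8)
The plan is to prove the two implications separately: the necessity of $\frac{1}{q}\geq\frac{1}{p}+\frac{\alpha}{d+1}-1$ by testing $K_\alpha$ against indicator functions of non-isotropic balls, and its sufficiency by reducing to a single critical exponent and treating that endpoint by a weak-type estimate together with real interpolation. Throughout I write $\frac{1}{q_0}=\frac{1}{p}+\frac{\alpha}{d+1}-1$; the hypothesis $1<p<\frac{d+1}{d+1-\alpha}$ together with $\alpha\leq d+1$ guarantees $0<\frac{1}{q_0}<1$, so that $1<p<q_0<\infty$.

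For necessity, fix a boundary point $a\in\partial\mathbb{B}^d$ and for small $r>0$ consider the non-isotropic ball $Q_r=\{w\in\mathbb{B}^d:|1-\langle a,w\rangle|<r\}$, whose volume satisfies $v(Q_r)\asymp r^{d+1}$. Taking $f=\mathbf{1}_{Q_r}$ gives $\|f\|_{L^p}\asymp r^{(d+1)/p}$. Since $\langle z,w\rangle$ is close to $1$ for $z,w\in Q_r$, the quantity $1-\langle z,w\rangle$ has small argument, so $\operatorname{Re}(1-\langle z,w\rangle)^{-\alpha}\asymp r^{-\alpha}$ there; hence $|K_\alpha f(z)|\gtrsim r^{-\alpha}v(Q_r)\asymp r^{d+1-\alpha}$ for $z$ in a comparable ball $Q_{cr}$. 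Therefore $\|K_\alpha f\|_{L^q}\gtrsim r^{(d+1-\alpha)+(d+1)/q}$, and boundedness forces $r^{(d+1-\alpha)+(d+1)/q}\lesssim r^{(d+1)/p}$ as $r\to0^+$, which is exactly $\frac{1}{q}\geq\frac{1}{p}+\frac{\alpha}{d+1}-1$.

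For sufficiency it suffices, for each admissible $p$, to prove the single critical bound $K_\alpha:L^p\to L^{q_0}$: indeed $\frac{1}{q}\geq\frac{1}{q_0}$ means $q\leq q_0$, and since $\mathbb{B}^d$ has finite measure $L^{q_0}\subset L^{q}$ continuously, so $K_\alpha:L^p\to L^q$ follows at once. Because $|K_\alpha f|\leq K_\alpha^+|f|$, it is enough to bound the positive operator $K_\alpha^+:L^p\to L^{q_0}$, and this Hardy--Littlewood--Sobolev endpoint is the main obstacle. I would obtain it from a weak-type bound: using Proposition 1.4.10 of \cite{Rud} and the ball decomposition above, one shows that $K_\alpha^+$ is of weak type $(1,\frac{d+1}{\alpha})$, i.e. $K_\alpha^+:L^1\to L^{\frac{d+1}{\alpha},\infty}$ is bounded, by estimating the distribution function of $K_\alpha^+\mathbf{1}$ through the measures of the non-isotropic balls. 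Since the kernel $|1-\langle z,w\rangle|^{-\alpha}$ is symmetric, $K_\alpha^+$ is formally self-adjoint, so by duality it also satisfies the symmetric restricted weak-type bound $K_\alpha^+:L^{\frac{d+1}{d+1-\alpha},1}\to L^\infty$. The two endpoints $(1,\tfrac{d+1}{\alpha})$ and $(\tfrac{d+1}{d+1-\alpha},\infty)$ lie on the critical line, and real (Marcinkiewicz) interpolation between these two bounds yields the strong-type estimate $K_\alpha^+:L^p\to L^{q_0}$ for every $p$ strictly between, that is for $1<p<\frac{d+1}{d+1-\alpha}$.

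The genuinely hard step is the weak-type endpoint $K_\alpha^+:L^1\to L^{\frac{d+1}{\alpha},\infty}$, since the two ordinary strong endpoints fail (Lemma \ref{ld1} forbids $q=\frac{d+1}{\alpha}$ and Lemma \ref{kllq} forbids $p=\frac{d+1}{d+1-\alpha}$), and complex interpolation between interior bounded points can never reach the closed critical line. Everything else is bookkeeping: the reduction by the inclusion $L^{q_0}\subset L^q$, the domination by $K_\alpha^+$, and the exponent arithmetic $\frac{1}{q_0}=\frac{1}{p}+\frac{\alpha}{d+1}-1$. An alternative to the interpolation route would be to prove the endpoint directly via a Hedberg-type pointwise inequality $K_\alpha^+f(z)\lesssim (Mf(z))^{\theta}\,\|f\|_{L^p}^{1-\theta}$ for a suitable non-isotropic maximal operator $M$ and exponent $\theta$, followed by the $L^p$-boundedness of $M$; but this merely relocates the same difficulty into a maximal function theory adapted to the quasi-metric $|1-\langle z,w\rangle|$.
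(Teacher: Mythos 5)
Your sufficiency half is essentially the paper's own route: after reducing to the critical exponent $\frac{1}{q_0}=\frac{1}{p}+\frac{\alpha}{d+1}-1$, the paper verifies that the kernel slices $k_\alpha(z,\cdot)$ lie in $L^{\frac{d+1}{\alpha},\infty}$ uniformly in $z$ (Lemma \ref{lk}) and invokes Tao's weak-form Young inequality (Lemma \ref{tao}), whose content is exactly the weak-type $(1,\frac{d+1}{\alpha})$ bound plus the interpolation you describe. The only caveat there is the degenerate case $\alpha=d+1$: then $q_0=p$ (not $p<q_0$ as you assert), your upper endpoint would be $K_\alpha^+:L^{\infty,1}\to L^\infty$, which fails, and one has to fall back on the $L^p$-boundedness of the Bergman projection; the paper's citation of Lemma \ref{tao} has the same blind spot, since that lemma requires $1<r<\infty$.

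The genuine gap is in your necessity argument. You assert that for $z,w\in Q_r$ the number $1-\langle z,w\rangle$ has small argument because $\langle z,w\rangle$ is close to $1$; this is false. Smallness of $|1-\langle z,w\rangle|$ controls only the modulus: one has $\operatorname{Re}(1-\langle z,w\rangle)\ge \frac12\bigl((1-|z|)+(1-|w|)\bigr)$, while $|\operatorname{Im}(1-\langle z,w\rangle)|$ can be comparable to $|1-\langle z,w\rangle|\asymp r$, so for points of $Q_r$ with $1-|z|\ll r$ (which fill most of $Q_r$) the argument of $1-\langle z,w\rangle$ approaches $\pm\pi/2$. Already for $d=1$, $z=1-r^2$ and $w=(1-r^2)e^{ir/2}$ both lie in $Q_r$ and give $\operatorname{Re}(1-z\bar w)\asymp r^2$, $\operatorname{Im}(1-z\bar w)\asymp r$. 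Hence $(1-\langle z,w\rangle)^{-\alpha}$ has argument sweeping an interval of length nearly $\alpha\pi$, and since $\alpha$ may be as large as $d+1\ge 2$, its real part genuinely changes sign on $Q_r\times Q_r$: the claimed bound $\operatorname{Re}(1-\langle z,w\rangle)^{-\alpha}\asymp r^{-\alpha}$ is unavailable, and with it the key lower bound $|K_\alpha\mathbf{1}_{Q_r}(z)|\gtrsim r^{d+1-\alpha}$ loses its justification. The argument can be repaired --- for instance evaluate $K_\alpha\mathbf{1}_{Q_{\delta r}}$ only on the tent of points $z=(1-\rho)\eta$ with $\rho\asymp r$ and $|1-\langle\eta,a\rangle|<\delta r$, where $1-\langle z,w\rangle=\rho(1+O(\delta))$ really does have small argument --- but this restriction must be made explicit and costs a $\delta$-dependent constant. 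The paper avoids the oscillation altogether: it tests $K_\alpha$ on the holomorphic functions $f_t(z)=\sum_n n^t z_1^n$, on which $K_\alpha=R^{0,\alpha-d-1}$ acts diagonally on Taylor coefficients, and reads off the failure of $K_\alpha f_t\in A_0^q$ from the slice isometry $A_{0,1}^q(\mathbb{B}^d)\simeq A_{d-1}^q(\mathbb{D})$ together with the coefficient-multiplier results of \cite{BKV}; no cancellation issue arises on that route.
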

  Before proving Lemma \ref{lpq}, we do some preparations. For $p\geq1$, denote Lorentz space $L^{p,\infty}$ on $\mathbb{B}^d$ by 
  $$L^{p,\infty}=\{ f: \sup_{\lambda>0} \lambda d_f^{\frac{1}{p}}(\lambda)<\infty\},$$ where $d_f(\lambda)=v\{z\in\mathbb{B}^d : |f(z)|>\lambda  \}.$ Note that $L^{p,\infty}\subset L^{q,\infty}$ if $p>q,$ and the inclusion is continuous.
  
  \begin{lem}\label{lk} There exists a constant $C$ that only depends on $\alpha$ and $d$ such that,  for every $z\in\mathbb{B}^d,$ $$\Vert k_\alpha(z,\cdot)\Vert_{L^{\frac{d+1}{\alpha},\infty}}=\Vert k_\alpha(\cdot,z)\Vert_{L^{\frac{d+1}{\alpha},\infty}}<C.$$
\end{lem}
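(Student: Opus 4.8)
The first equality is immediate and requires no work: since $\langle z,w\rangle=\overline{\langle w,z\rangle}$, we have $|k_\alpha(z,w)|=|1-\langle z,w\rangle|^{-\alpha}=|k_\alpha(w,z)|$ for every $w$, so the two functions $w\mapsto k_\alpha(z,w)$ and $w\mapsto k_\alpha(w,z)$ have identical distribution functions and hence the same $L^{\frac{d+1}{\alpha},\infty}$ norm. Write $p=\frac{d+1}{\alpha}$. By the defining formula $\|f\|_{L^{p,\infty}}=\sup_{\lambda>0}\lambda\, d_f(\lambda)^{1/p}$, the whole statement reduces to a bound on the distribution function $d_z(\lambda)=v\{w\in\mathbb{B}^d:|1-\langle z,w\rangle|^{-\alpha}>\lambda\}=v\{w:|1-\langle z,w\rangle|<t\}$, where $t=\lambda^{-1/\alpha}$, that is uniform in $z$. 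Concretely, the plan is to prove the volume estimate that there is a constant $C_d$ depending only on $d$ with
\[ v\{w\in\mathbb{B}^d:|1-\langle z,w\rangle|<t\}\le C_d\, t^{d+1}\qquad\text{for all } z\in\mathbb{B}^d,\ 0<t\le\tfrac12. \]

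To prove this I would first use the unitary invariance of both $\langle\cdot,\cdot\rangle$ and $dv$ to reduce to $z=(a,0,\dots,0)$ with $a=|z|\in[0,1)$, so that $\langle z,w\rangle=a\bar w_1$. A key preliminary observation is that the sublevel set is empty unless $a>1-t$: indeed $|1-a\bar w_1|\ge 1-a|w_1|>1-a$, so $|1-\langle z,w\rangle|<t$ forces $a>1-t$, whence $a\ge\tfrac12$ once $t\le\tfrac12$. Next I would integrate out $w'=(w_2,\dots,w_d)$ over the slice $\{|w'|^2<1-|w_1|^2\}$, which contributes a factor comparable to $(1-|w_1|^2)^{d-1}$ and leaves a planar integral in $w_1$. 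The substitution $u=1-a\bar w_1$ turns the constraint into $|u|<t$, scales area by $a^{-2}$, and gives $1-|w_1|^2=a^{-2}\bigl(a^2-|1-\bar u|^2\bigr)$; since $a\le1$ one has $a^2-|1-\bar u|^2\le 1-|1-\bar u|^2=2\,\mathrm{Re}\,u-|u|^2\le 2|u|<2t$. Bounding the integrand by $(2t/a^2)^{d-1}$ and the region by the disk $|u|<t$ of area $\pi t^2$ then yields a bound of order $t^{d+1}/a^{2d}$, which together with $a\ge\tfrac12$ gives exactly $C_d\,t^{d+1}$.

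Finally I would assemble the pieces. For $\lambda\ge 2^{\alpha}$ we have $t=\lambda^{-1/\alpha}\le\tfrac12$, so the volume estimate gives $d_z(\lambda)\le C_d\lambda^{-p}$, and since $p=\frac{d+1}{\alpha}$ makes the power of $\lambda$ cancel exactly we get $\lambda\, d_z(\lambda)^{1/p}\le C_d^{1/p}$; for $\lambda<2^{\alpha}$ the trivial bound $d_z(\lambda)\le v(\mathbb{B}^d)=1$ gives $\lambda\, d_z(\lambda)^{1/p}<2^{\alpha}$. Taking the supremum over $\lambda$ yields $\|k_\alpha(z,\cdot)\|_{L^{p,\infty}}\le\max\{C_d^{1/p},2^{\alpha}\}$, a constant depending only on $\alpha$ and $d$, as required. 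The main obstacle is precisely the uniformity in $z$ as $|z|\to1^-$: the naive estimate degenerates like $a^{-2d}$, and the elementary remark that the sublevel set is empty unless $a>1-t$ is what forces $a$ to stay bounded away from $0$ on the relevant range of $t$ and rescues the bound.
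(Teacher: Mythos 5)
Your proposal is correct and follows essentially the same route as the paper: reduce by unitary invariance to $z=(a,0,\dots,0)$, integrate out $w_2,\dots,w_d$ to pick up the factor $(1-|w_1|^2)^{d-1}$, bound that factor by $O(t)$ and the planar region by $O(t^2)$ to get the $t^{d+1}$ volume estimate, and treat small $\lambda$ trivially. The only (harmless) organizational difference is that the paper splits into the cases $|z|<\tfrac12$ and $|z|\ge\tfrac12$ up front, whereas you use the observation that the sublevel set is empty unless $a>1-t$ to force $a\ge\tfrac12$ on the relevant range.
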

\begin{proof} 
By the unitary  invariance of Lebesgue measure, we only need to consider the case of $z=(|z|,0,\cdots,0).$ Note that   \begin{equation}\label{dk}d_{k_\alpha(\cdot,z)}(\lambda)=v\{w\in\mathbb{B}^d : \frac{1}{|1-\langle w, z \rangle|^\alpha} >\lambda \}=v\{w: |\frac{1}{|z|}-w_1|<\frac{1}{|z|}\lambda^{-\frac{1}{\alpha}} \}\end{equation}
When $|z|<\frac{1}{2},$ then $ \frac{1}{|1-\langle w, z \rangle|^\alpha}<2^{\alpha}.$ It follows that $d_{k_\alpha(\cdot,z)}(\lambda)=0, $ if $\lambda\geq 2^{\alpha}.$ Thus $$\Vert k_\alpha(\cdot,z)\Vert_{L^{\frac{d+1}{\alpha},\infty}}\leq 2^{\alpha}.$$  
Now we turn to the case $\frac{1}{2}\leq |z|< 1.$ The conclusion comes immediately  from the following estimation,
\begin{equation}
\lambda d_{k_\alpha(\cdot,z)}^{\frac{\alpha}{d+1}}(\lambda)\leq\begin{cases}\label{D}
1,&\lambda\leq 1,\\
(d\cdot2^{3d-1})^\frac{\alpha}{d+1}, &1<\lambda< \frac{1}{(1-|z|)^\alpha},\\
0,&\lambda\geq \frac{1}{(1-|z|)^\alpha}.
\end {cases}
\end {equation}
Now we prove (\ref{D}). Let $dV(w)=(\frac{i}{2})^d \prod_{n=1}^d dw_n\wedge d\bar{w}_n.$ Then $dV=\frac{\pi^d}{\Gamma(d+1)}dv.$ When $\lambda\leq 1,$ then $\lambda d_{k_\alpha(\cdot,z)}^{\frac{\alpha}{d+1}}(\lambda)<1.$ Denote $I$ by the subset in the unit disk such that 
\begin{equation}\begin{split} I=\{w_1\in \mathbb{D}: |\frac{1}{|z|}-w_1|<\frac{1}{|z|}\lambda^{-\frac{1}{\alpha}} \}. 
 \end{split}\end{equation} 
 When $1<\lambda< \frac{1}{(1-|z|)^\alpha},$ by (\ref{dk}) and Fubini's theorem,  we have that 
\begin{equation}\begin{split}\label{dk1} d_{k_\alpha(\cdot,z)}(\lambda)&=v\{w: |\frac{1}{|z|}-w_1|<\frac{1}{|z|}\lambda^{-\frac{1}{\alpha}}\}\\
   &\leq\frac{\Gamma(d+1)}{\pi^d} (\frac{i}{2})^d \int_{I}dw_1\wedge d\bar{w}_1\int_{|w_2|^2+\cdots+|w_d|^2<1-|w_1|^2} \prod_{n=2}^d dw_n\wedge d\bar{w}_n\\
   &=d \int_{I}(1-|w_1|^2)^{d-1}dv(w_1)\\
   &<d(1-\frac{1}{|z|^2}+2\frac{1}{|z|^2}\frac{1}{\lambda^\frac{1}{\alpha}}-\frac{1}{|z|^2\lambda^\frac{2}{\alpha}})^{d-1}\int_{I}dv(w_1)\\
   &<d\cdot2^{3d-3}\frac{1}{\lambda^\frac{d-1}{\alpha}}\frac{4}{\lambda^\frac{2}{\alpha}}\\
   &=\frac{d\cdot2^{3d-1}}{\lambda^\frac{d+1}{\alpha}}\\
 \end{split}\end{equation}
 Then (\ref{dk1}) implies that $\lambda d_{k_\alpha(\cdot,z)}^{\frac{\alpha}{d+1}}(\lambda)<(d\cdot2^{3d-1})^\frac{\alpha}{d+1}$ if $1<\lambda< \frac{1}{(1-|z|)^\alpha}.$ 
 When $\lambda\geq \frac{1}{(1-|z|)^\alpha},$ it is easy to see that $d_{k_\alpha(\cdot,z)}(\lambda)=0.$ So $\lambda d_{k_\alpha(\cdot,z)}^{\frac{\alpha}{d+1}}(\lambda)=0,$ if  $\lambda\geq \frac{1}{(1-|z|)^\alpha}.$\end{proof}
\begin{cor} \label{} There exists a constant $C$ that only depends on $\alpha$ and $d$ such that,  for every $z\in\mathbb{B}^d,$ $$\Vert k_\alpha^+(z,\cdot)\Vert_{L^{\frac{d+1}{\alpha},\infty}}=\Vert k_\alpha^+(\cdot,z)\Vert_{L^{\frac{d+1}{\alpha},\infty}}<C.$$

\end{cor}
Now we modify Proposition 6.1 of \cite{Tao} to suit our setting.
\begin{lem}\cite{Tao}\label{tao} Suppose that $k: \mathbb{B}^d  \times \mathbb{B}^d \rightarrow \mathbb{C}$ is measurable such that 
$$\Vert k(z,\cdot)\Vert_{L^{r,\infty}}\leq C, z\in \mathbb{B}^d, a.e.$$ and $$\Vert k(\cdot,w)\Vert_{L^{r,\infty}}\leq C, w\in \mathbb{B}^d, a.e.$$ for some $1<r<\infty$ and $C>0.$ Then the operator $T$ defined as 
$$Tf(z)=\int_{\mathbb{B}^d}k(z,w)f(w)dv(w)$$ is bounded   from $L^1$ to $L^{r,\infty}.$ Moreover, if $1<p<q<\infty$ such that $\frac{1}{p}+\frac{1}{r}=\frac{1}{q}+1,$ then $T$ is bounded from $L^p$ to $L^q.$
\end{lem}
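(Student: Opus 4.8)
The plan is to follow the structure of Tao's proof of Proposition 6.1, adapting it to the measure space $(\mathbb{B}^d,dv)$; since $dv$ is a finite measure the argument is, if anything, slightly simpler than over $\mathbb{R}^d$. The proof splits according to the two assertions: first the weak endpoint $T:L^1\rightarrow L^{r,\infty}$, and then the strong off-diagonal bound $T:L^p\rightarrow L^q$, which I would obtain from two endpoint estimates by real interpolation.

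For the weak endpoint I would write $Tf(z)=\int_{\mathbb{B}^d}k(z,w)f(w)\,dv(w)$ and view it as an $L^{r,\infty}$-valued average of the functions $w\mapsto k(\cdot,w)$ against the measure $|f(w)|\,dv(w)$. Because $r>1$, the quasi-norm of $L^{r,\infty}$ is equivalent to a genuine norm (obtained by replacing the decreasing rearrangement by its average), so Minkowski's integral inequality is available up to a constant. Applying it and invoking the hypothesis $\Vert k(\cdot,w)\Vert_{L^{r,\infty}}\leq C$ gives
$$\Vert Tf\Vert_{L^{r,\infty}}\leq C_r\int_{\mathbb{B}^d}\Vert k(\cdot,w)\Vert_{L^{r,\infty}}\,|f(w)|\,dv(w)\leq C\,\Vert f\Vert_{L^1},$$
which is the first assertion.

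For the off-diagonal bound I would first produce a second endpoint, $T:L^{r',1}\rightarrow L^\infty$, from the other hypothesis. Pointwise H\"older's inequality for Lorentz spaces (the pairing $L^{r,\infty}\cdot L^{r',1}\hookrightarrow L^1$, valid since $\frac1r+\frac1{r'}=1$) gives $|Tf(z)|\leq \Vert k(z,\cdot)\Vert_{L^{r,\infty}}\,\Vert f\Vert_{L^{r',1}}\leq C\,\Vert f\Vert_{L^{r',1}}$, and taking the supremum over $z$ yields $T:L^{r',1}\rightarrow L^\infty$. I then interpolate the two endpoints $T:L^{1}\rightarrow L^{r,\infty}$ and $T:L^{r',1}\rightarrow L^\infty$ by the real method. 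For the given $(p,q)$, choose $\theta\in(0,1)$ with $\frac1p=(1-\theta)+\frac{\theta}{r'}$; the scaling relation $\frac1p+\frac1r=\frac1q+1$ then forces $\frac1q=\frac{1-\theta}{r}$. Using the identification of real interpolation spaces with Lorentz spaces, $(L^{1},L^{r',1})_{\theta,p}=L^{p,p}=L^p$ and $(L^{r,\infty},L^\infty)_{\theta,p}=L^{q,p}$; since $p<q$ one has the continuous embedding $L^{q,p}\hookrightarrow L^q$, so the interpolation theorem delivers the desired bound $T:L^p\rightarrow L^q$.

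The main obstacle is the bookkeeping in the interpolation step: one must check that the two hypotheses really furnish endpoints whose interpolation scale passes through every admissible $(p,q)$, and in particular must track the second (Lorentz) index so that the target lands in $L^q$ rather than merely in some larger $L^{q,s}$. The choice $s=p$, together with $p<q$ and the embedding $L^{q,p}\hookrightarrow L^q$, is exactly what closes this gap; everything else reduces to the standard equivalence between weak-type kernel bounds and Lorentz-space mapping properties.
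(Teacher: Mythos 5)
Your proof is correct and is essentially the standard argument behind the cited result (the paper itself gives no proof of this lemma, only the reference to Tao's Proposition 6.1): the weak endpoint $L^1\rightarrow L^{r,\infty}$ via Minkowski's inequality in the normable space $L^{r,\infty}$ ($r>1$), the dual endpoint $L^{r',1}\rightarrow L^\infty$ via the Lorentz--H\"older pairing, and real interpolation with the second index chosen as $p$ so that $L^{q,p}\hookrightarrow L^q$. The bookkeeping checks out: $\frac1p=1-\frac{\theta}{r}$ and $\frac1q=\frac{1-\theta}{r}$ do give $\frac1p+\frac1r=\frac1q+1$, and as $\theta$ ranges over $(0,1)$ this covers exactly the admissible range $1<p<r'$.
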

\begin{cor}\label{wea} If $0<\alpha\leq d+1,$ then $K_\alpha,K_\alpha^+: L^{1}\rightarrow L^{\frac{d+1}{\alpha},\infty}$ are bounded.
\end{cor}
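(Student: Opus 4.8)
The plan is to treat the range $0<\alpha\le d+1$ in two parts, split at the critical endpoint $\alpha=d+1$ (equivalently $r:=\frac{d+1}{\alpha}=1$), since the interior range and the endpoint require genuinely different arguments. In both parts I first reduce to the positive operator: since $\vert K_\alpha f(z)\vert\le K_\alpha^+(\vert f\vert)(z)$ and $\Vert\,\vert f\vert\,\Vert_{L^1}=\Vert f\Vert_{L^1}$, any bound $K_\alpha^+:L^1\to L^{\frac{d+1}{\alpha},\infty}$ immediately yields the same bound for $K_\alpha$.

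For $0<\alpha<d+1$ the exponent $r=\frac{d+1}{\alpha}$ lies in $(1,\infty)$, so I would apply Lemma \ref{tao} directly. Its hypotheses are precisely the uniform estimates $\sup_{z}\Vert k_\alpha(z,\cdot)\Vert_{L^{r,\infty}}<\infty$ and $\sup_{w}\Vert k_\alpha(\cdot,w)\Vert_{L^{r,\infty}}<\infty$ (and the identical bounds for $k_\alpha^+$), which are exactly Lemma \ref{lk} and its corollary. Hence Lemma \ref{tao} gives $K_\alpha,K_\alpha^+:L^1\to L^{\frac{d+1}{\alpha},\infty}$ simultaneously, with no further work.

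The endpoint $\alpha=d+1$, i.e. $r=1$, is the main obstacle, and it is where the classical weak-$(1,1)$ phenomenon enters. Lemma \ref{tao} explicitly excludes $r=1$, and this is not a technicality: the dual-norm computation underlying it degenerates at $r=1$ into the strong bound $K_{d+1}^+:L^1\to L^1$, which is false because $\int_{\mathbb{B}^d}\vert 1-\langle z,w\rangle\vert^{-(d+1)}\,dv(w)\approx\log\frac{1}{1-\vert z\vert^2}$ is unbounded. Instead I would recognize $K_{d+1}^+$ as a Calder\'on--Zygmund operator on the space of homogeneous type $(\mathbb{B}^d,\rho,v)$ with quasi-metric $\rho(z,w)=\vert 1-\langle z,w\rangle\vert$ and volume growth $v(B_\rho(z,s))\approx s^{d+1}$, so that the kernel satisfies the size estimate $k_{d+1}^+(z,w)=\rho(z,w)^{-(d+1)}\approx v(B_\rho(z,\rho(z,w)))^{-1}$ and, being smooth off the boundary diagonal, the H\"ormander regularity condition. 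Together with the classical $L^2$-boundedness of the maximal Bergman operator $K_{d+1}^+$ (a Schur test with weight $(1-\vert z\vert^2)^{-1/2}$, in the spirit of Forelli--Rudin), the Coifman--Weiss weak-$(1,1)$ theorem yields $K_{d+1}^+:L^1\to L^{1,\infty}$, and then $K_{d+1}:L^1\to L^{1,\infty}$ by the pointwise domination above.

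I expect the endpoint to be by far the hardest step; the interior range is a direct citation of Lemma \ref{tao}. The delicate point is precisely that $K_{d+1}^+$ carries a non-negative kernel with no internal cancellation, so the weak bound cannot be read off the kernel: it must be produced by the Calder\'on--Zygmund decomposition, where the cancellation is supplied by the mean-zero bad part while the kernel only needs to be $L^2$-bounded and regular. In carrying this out I would verify the quasi-triangle inequality and doubling for $\rho$, confirm that the regularity estimate holds uniformly, and be careful to quote the $L^2$-bound as the classical Forelli--Rudin result rather than from Theorem 2, so as to avoid circularity.
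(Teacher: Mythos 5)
Your proposal is correct and follows essentially the same route as the paper: for $0<\alpha<d+1$ the paper also combines Lemma \ref{lk} with Lemma \ref{tao}, and for the endpoint $\alpha=d+1$ it invokes the Calder\'on--Zygmund decomposition on the ball viewed as a space of homogeneous type, citing Theorem 6 of \cite{Mc} for $K_{d+1}$ and noting the same argument works for $K_{d+1}^+$. Your sketch of the endpoint case is just an unpacking of that citation.
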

\begin{proof}
 When $\alpha=d+1,$ $K_{d+1}$ is the  Bergman projection, then $K_{d+1}: L^{1}\rightarrow L^{1,\infty}$ is  bounded by the proof of Theorem 6 of \cite{Mc}. Indeed, similar to the proof of Theorem 6 of \cite{Mc}, by the  Calder\'on-Zygmund decomposition, it can be proved that $K_{d+1}^+: L^{1}\rightarrow L^{1,\infty}$ is  bounded. When $0<\alpha<d+1,$ by Lemma \ref{lk} and Lemma \ref{tao}, it implies that $K_\alpha,K_\alpha^+: L^{1}\rightarrow L^{\frac{d+1}{\alpha},\infty}$ are bounded. It completes the proof.
\end{proof}
{\noindent{\bf{Sufficiency part of Lemma \ref{lpq}.}} We need to prove that  if $1<p<\frac{d+1}{d+1-\alpha}, \frac{1}{q}\geq \frac{1}{p}+ \frac{\alpha}{d+1}-1,$ then $K_\alpha: L^p\rightarrow L^q$ is bounded.
By the continuous embedding of $L$-integrable spaces, it suffices to show that $K_\alpha: L^{p}\rightarrow L^q$ is bounded if $ \frac{1}{q}= \frac{1}{p}+ \frac{\alpha}{d+1}-1.$ Then Lemma \ref{lk} and Lemma \ref{tao} implies that $K_\alpha: L^{p}\rightarrow L^q$ is bounded. \qed
 \begin{cor} \label{KLL} If $1<p<\frac{d+1}{d+1-\alpha},$ then $K_\alpha^+: L^p\rightarrow L^q$ is bounded if $\frac{1}{q}\geq \frac{1}{p}+ \frac{\alpha}{d+1}-1.$ 
 \end{cor}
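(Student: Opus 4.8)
The plan is to mirror the sufficiency argument of Lemma \ref{lpq} essentially verbatim, working with the modulus kernel $k_\alpha^+$ in place of $k_\alpha$. I would stress at the outset that one cannot simply transfer the result from $K_\alpha$: the pointwise bound $|K_\alpha f|\leq K_\alpha^+|f|$ only yields $G(K_\alpha^+)\subset G(K_\alpha)$, which is the wrong direction for deducing boundedness of $K_\alpha^+$. So the estimate must be produced directly. First I would invoke the continuous embedding of $L$-integrable spaces over the finite-measure ball: writing $\frac{1}{q_0}=\frac{1}{p}+\frac{\alpha}{d+1}-1$, the hypothesis $\frac{1}{q}\geq\frac{1}{p}+\frac{\alpha}{d+1}-1$ says exactly $q\leq q_0$, whence $L^{q_0}\subset L^q$. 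It therefore suffices to establish boundedness $K_\alpha^+:L^p\rightarrow L^{q_0}$ at the single boundary exponent $\frac{1}{q_0}=\frac{1}{p}+\frac{\alpha}{d+1}-1$.

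For this boundary case I would apply Lemma \ref{tao} with $r=\frac{d+1}{\alpha}$. Its two hypotheses are precisely the uniform weak-type bounds $\sup_{z}\Vert k_\alpha^+(z,\cdot)\Vert_{L^{(d+1)/\alpha,\infty}}<\infty$ and $\sup_{w}\Vert k_\alpha^+(\cdot,w)\Vert_{L^{(d+1)/\alpha,\infty}}<\infty$. These are supplied by the corollary immediately following Lemma \ref{lk}, which itself is an instant consequence of Lemma \ref{lk} because $|k_\alpha(z,w)|=k_\alpha^+(z,w)$, so the two kernels share the same distribution function and hence the same Lorentz quasinorms. The index relation $\frac{1}{p}+\frac{1}{r}=\frac{1}{q}+1$ demanded by Lemma \ref{tao} reads $\frac{1}{p}+\frac{\alpha}{d+1}=\frac{1}{q_0}+1$, which is exactly our boundary exponent, so the conclusion $K_\alpha^+:L^p\rightarrow L^{q_0}$ bounded follows.

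The one point requiring care is verifying that the quadruple lands inside the admissible range $1<p<q<\infty$ of Lemma \ref{tao}. The hypothesis $p>1$ is given, and the standing restriction $1<p<\frac{d+1}{d+1-\alpha}$ is equivalent to $\frac{1}{p}>1-\frac{\alpha}{d+1}$, i.e. $\frac{1}{q_0}>0$, which forces $q_0<\infty$; when $\alpha<d+1$ the same computation gives $\frac{1}{q_0}<\frac{1}{p}$ and hence $q_0>p$, and it also gives $r=\frac{d+1}{\alpha}>1$, so all of Lemma \ref{tao}'s strict inequalities hold. The main (and only) obstacle is the degenerate endpoint $\alpha=d+1$, where $r=1$ and $q_0=p$, so Lemma \ref{tao} does not apply; there the required $L^p$--$L^p$ boundedness of the absolute-value Bergman kernel is the classical Forelli--Rudin estimate and would need to be cited separately, exactly as in the companion argument for $K_\alpha$.
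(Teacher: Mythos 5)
Your argument is correct and coincides with the paper's: reduce to the boundary exponent $\frac{1}{q_0}=\frac{1}{p}+\frac{\alpha}{d+1}-1$ via the embedding $L^{q_0}\subset L^{q}$ on the finite-measure ball, then apply Lemma \ref{tao} with $r=\frac{d+1}{\alpha}$, the required uniform weak-type bounds on $k_\alpha^+$ being exactly the corollary following Lemma \ref{lk}. Your remark that the endpoint $\alpha=d+1$ (where $r=1$ and $q_0=p$, so Lemma \ref{tao} is inapplicable and one must instead invoke the classical Forelli--Rudin $L^p$-boundedness of the absolute Bergman kernel) is a point the paper passes over silently, and is worth retaining.
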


 For necessity part of Lemma \ref{lpq}, we need find out a function belongs to $L^p$ but its image under $K_\alpha$ is not in $L^q.$  So we establish an isometry from $L^p(\mathbb{D},dv_{d-1})$ to $L^p(\mathbb{B}^d),$ where $\mathbb{D}$ is the unit disc, i.e.  $\mathbb{D}=\mathbb{B}^1.$ Let $$I_p: L^p(\mathbb{D},dv_{d-1}) \rightarrow L^p(\mathbb{B}^d), I_p(f)(z)=f(z_1).$$
 Denote $L_1^p(\mathbb{B}^d)=\{f\in L^p(\mathbb{B}^d): f(z)=f(z_1,0\cdots,0),\forall  z\in \mathbb{B}^d\}.$  If $f\in L_1^p(\mathbb{B}^d)$, we always write $f(z_1)$ without ambiguity.  Denote $A_{0,1}^p$ by the set of holomorphic functions in $L_1^p(\mathbb{B}^d).$  Then we have the following lemma.
 \begin{lem}\label{ip} $I_p$ is an isometry from $A_{d-1}^p(\mathbb{D})$ onto $A_{0,1}^p(\mathbb{B}^d).$ 
 \end{lem}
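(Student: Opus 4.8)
The plan is to separate the metric claim (that $I_p$ preserves norms) from the identification of the holomorphic subspaces, and to prove the former on all of $L^p$ before restricting to $A_{d-1}^p(\mathbb{D})$.

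First I would establish the key integration identity: for any integrable function $g$ of the single variable $z_1$,
\[
\int_{\mathbb{B}^d} g(z_1)\,dv(z)=\int_{\mathbb{D}} g(\zeta)\,dv_{d-1}(\zeta).
\]
This is an integration-by-slices (Fubini) computation: fixing $z_1=\zeta\in\mathbb{D}$, the remaining variables $(z_2,\dots,z_d)$ range over the ball of radius $\sqrt{1-|\zeta|^2}$ in $\mathbb{C}^{d-1}$, whose Lebesgue volume is a fixed constant times $(1-|\zeta|^2)^{d-1}$. Integrating out these $d-1$ variables therefore produces exactly the factor $(1-|\zeta|^2)^{d-1}$, and the normalization constants of the two Lebesgue measures combine to give precisely $c_{d-1}=d$ on $\mathbb{D}$ (taking $d=1$ in the formula $c_\beta=\Gamma(d+\beta+1)/(\Gamma(d+1)\Gamma(\beta+1))$), matching the definition of $dv_{d-1}$. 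Taking $g=|f|^p$ then yields $\Vert I_pf\Vert_{L^p(\mathbb{B}^d)}=\Vert f\Vert_{L^p(\mathbb{D},dv_{d-1})}$ for every $f$, so $I_p$ is an isometry between the full $L^p$ spaces.

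Next I would check that $I_p$ carries the holomorphic subspace into $A_{0,1}^p(\mathbb{B}^d)$. If $f\in A_{d-1}^p(\mathbb{D})$, then $F(z):=f(z_1)$ is holomorphic on $\mathbb{B}^d$, being holomorphic in $z_1$ and constant (hence holomorphic) in $z_2,\dots,z_d$; it depends only on $z_1$, so $F\in L_1^p(\mathbb{B}^d)$, and the isometry gives $\Vert F\Vert_{L^p(\mathbb{B}^d)}=\Vert f\Vert_{A_{d-1}^p(\mathbb{D})}<\infty$, whence $F\in A_{0,1}^p(\mathbb{B}^d)$.

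Finally, for surjectivity, given $F\in A_{0,1}^p(\mathbb{B}^d)$ I would set $f(\zeta):=F(\zeta,0,\dots,0)$, which is holomorphic on $\mathbb{D}$ as the restriction of $F$ to the first coordinate slice. By the defining property $F(z)=F(z_1,0,\dots,0)$ of $A_{0,1}^p$, we get $I_pf(z)=f(z_1)=F(z)$, and the isometry forces $f\in A_{d-1}^p(\mathbb{D})$. Thus $I_p$ maps $A_{d-1}^p(\mathbb{D})$ isometrically onto $A_{0,1}^p(\mathbb{B}^d)$. The only genuine computation is the slice integration in the first step, and the single point requiring care is matching the normalization constants so that the induced weight is \emph{exactly} $dv_{d-1}$ and not merely a constant multiple of it; everything else is formal.
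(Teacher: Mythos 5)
Your proposal is correct and follows essentially the same route as the paper: the core of the paper's proof is exactly the slice-integration computation $\int_{\mathbb{B}^d}|f(z_1)|^p\,dv = d\int_{\mathbb{D}}|f(z_1)|^p(1-|z_1|^2)^{d-1}\,dv(z_1) = \Vert f\Vert_{L^p(\mathbb{D},dv_{d-1})}^p$, with the constant $d=c_{d-1}$ matching the normalization of $dv_{d-1}$ just as you observe. The only difference is that you spell out the holomorphy of the extension and the surjectivity, which the paper leaves implicit.
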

 \begin{proof}  Suppose that $ f(z_1)\in L_1^p(\mathbb{B}^d),$ then  \begin{equation}\begin{split} \Vert f\Vert_{ L_1^p(\mathbb{B}^d)}^p&=\int_{\mathbb{B}^d}  |f(z_1)|^p dv\\
 &=\frac{\Gamma(d+1)}{\pi^d} (\frac{i}{2})^d \int_{\mathbb{D}}  |f(z_1)|^p dz_1\wedge d\bar{z}_1\int_{|z_2|^2+\cdots+|z_d|^2<1-|z_1|^2} \prod_{n=2}^d dz_n\wedge d\bar{z}_n\\
 &=d\int_{\mathbb{D}}  |f(z_1)|^p (1-|z_1|^2)^{d-1}dv(z_1)\\
 &=\Vert f\Vert_{L^p(\mathbb{D},dv_{d-1})}^p.\\
  \end{split}\end{equation}
  It leads to the desired result.
 \end{proof}
 \begin{cor}\label{la} $A_{0,1}^p(\mathbb{B}^d) \simeq A_{d-1}^p(\mathbb{D}).$
 \end{cor}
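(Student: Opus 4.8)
The plan is to read off Corollary \ref{la} essentially directly from Lemma \ref{ip}, since the only content not already contained there is the bookkeeping of holomorphy and surjectivity. Recall that $I_p$ is defined by $I_p(f)(z) = f(z_1)$, and that the computation in Lemma \ref{ip} already supplies the norm identity $\Vert I_p(f)\Vert_{L_1^p(\mathbb{B}^d)} = \Vert f\Vert_{L^p(\mathbb{D},dv_{d-1})}$. Thus $I_p$ is a linear isometry; what I would verify is merely that it restricts to a bijection between the holomorphic subspaces $A_{d-1}^p(\mathbb{D})$ and $A_{0,1}^p(\mathbb{B}^d)$.

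First I would check that $I_p$ preserves holomorphy: if $f$ is holomorphic on $\mathbb{D}$, then $I_p(f)(z) = f(z_1)$ depends holomorphically on $z_1$ and is constant in the remaining coordinates, hence is holomorphic on $\mathbb{B}^d$; combined with the norm identity this shows $I_p$ maps $A_{d-1}^p(\mathbb{D})$ into $A_{0,1}^p(\mathbb{B}^d)$. For surjectivity, I would take any $g \in A_{0,1}^p(\mathbb{B}^d)$, which by definition satisfies $g(z) = g(z_1,0,\dots,0)$, and set $h(\zeta) := g(\zeta,0,\dots,0)$; then $h$ is holomorphic on $\mathbb{D}$, the norm identity of Lemma \ref{ip} forces $h \in L^p(\mathbb{D},dv_{d-1})$, so $h \in A_{d-1}^p(\mathbb{D})$ and $I_p(h) = g$.

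Finally, since $I_p$ is a surjective linear isometry between these two spaces, it is an isometric isomorphism: its inverse is the restriction map $g \mapsto g(\cdot,0,\dots,0)$, which is automatically bounded. This is precisely the assertion $A_{0,1}^p(\mathbb{B}^d) \simeq A_{d-1}^p(\mathbb{D})$. I do not expect any genuine obstacle here, since the entire analytic content — the change of variables computing the weighted norm — has already been carried out in Lemma \ref{ip}, and what remains is only the elementary fact that a surjective isometry is an isomorphism together with the trivial compatibility of $I_p$ with holomorphy.
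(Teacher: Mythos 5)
Your argument is correct and follows the paper's route exactly: the corollary is recorded as an immediate consequence of Lemma \ref{ip}, whose change-of-variables computation already supplies the norm identity, and you merely make explicit the (routine) compatibility with holomorphy and the surjectivity of $I_p$ onto $A_{0,1}^p(\mathbb{B}^d)$, which the paper leaves implicit. No issues.
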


   \begin{lem}\label{lap} Suppose that $t\in \mathbb{R},$ then $f(z_1)=\sum_{n=1}^{\infty} n^t z_1^n \in A_{0,1}^p(\mathbb{B}^d) $ if and only if $p(t+1)<d+1.$
  \end{lem}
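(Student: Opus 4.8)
The plan is to move the problem to the unit disc and reduce it to a single weighted integral whose convergence is controlled by the exponent $p(t+1)$. First I would apply Corollary \ref{la} (equivalently the isometry $I_p$ of Lemma \ref{ip}): the series $f(z_1)=\sum_{n\geq 1}n^t z_1^n$ belongs to $A_{0,1}^p(\mathbb{B}^d)$ exactly when it belongs to $A_{d-1}^p(\mathbb{D})$, so the assertion is equivalent to
\[
\int_{\mathbb{D}}\Big|\sum_{n=1}^{\infty}n^t z_1^n\Big|^p (1-|z_1|^2)^{d-1}\,dv(z_1)<\infty .
\]
The range $t\leq -1$ is settled at once: then $\sum_n n^t$ converges (and $t=-1$ gives $-\log(1-z_1)$), so $f$ is bounded, resp. grows only logarithmically, hence lies in every weighted Bergman space, while at the same time $p(t+1)\leq 0<d+1$; thus the equivalence holds trivially. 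The real content is the case $t>-1$.

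For $t>-1$ I would compare $f$ with the binomial function $(1-z_1)^{-(t+1)}=\sum_n \frac{\Gamma(n+t+1)}{\Gamma(t+1)\Gamma(n+1)}z_1^n$. Since $\frac{\Gamma(n+t+1)}{\Gamma(n+1)}=n^t\big(1+O(1/n)\big)$, the coefficients of $f$ and of $\Gamma(t+1)(1-z_1)^{-(t+1)}$ differ by $O(n^{t-1})$; iterating this Gamma-ratio expansion a finite number of times produces
\[
f(z_1)=\Gamma(t+1)(1-z_1)^{-(t+1)}+R(z_1),
\]
where $R$ is a finite linear combination of binomials $(1-z_1)^{-(t+1-j)}$ with $j\geq 1$ together with a bounded term, so that $|R(z_1)|=o\big(|1-z_1|^{-(t+1)}\big)$ as $z_1\to 1$. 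Consequently $|f(z_1)|\asymp |1-z_1|^{-(t+1)}$ near $\partial\mathbb{D}$: the estimate $|f(z_1)|\leq C|1-z_1|^{-(t+1)}$ holds on all of $\mathbb{D}$, and there is a $\delta>0$ with $|f(z_1)|\geq \tfrac12\Gamma(t+1)|1-z_1|^{-(t+1)}$ whenever $|1-z_1|<\delta$.

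This two-sided control reduces membership to the convergence of
\[
\int_{\mathbb{D}}\frac{(1-|z_1|^2)^{d-1}}{|1-z_1|^{p(t+1)}}\,dv(z_1),
\]
the upper bound giving finiteness and the lower bound, integrated over $\{|1-z_1|<\delta\}$, giving divergence. Passing to polar coordinates and using the circle version of Proposition 1.4.10 of \cite{Rud}, namely $\int_0^{2\pi}|1-re^{i\theta}|^{-c}\,d\theta\asymp (1-r)^{1-c}$ for $c>1$ (and $\asymp 1$, resp. $\log\frac{1}{1-r}$, for $c<1$, resp. $c=1$), the radial integral is, up to lower-order contributions, $\int_0^1 (1-r)^{d-p(t+1)}\,dr$, which is finite precisely when $p(t+1)<d+1$. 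This gives the claimed equivalence.

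I expect the main obstacle to be the comparison in the second paragraph: upgrading the coefficient asymptotic $n^t\sim \frac{\Gamma(n+t+1)}{\Gamma(n+1)}$ into the genuine modulus estimate $|f(z_1)|\asymp |1-z_1|^{-(t+1)}$, rather than the lossy bound $|f(z_1)|\lesssim (1-|z_1|)^{-(t+1)}$ furnished by the triangle inequality. The latter would only yield the integrability threshold $p(t+1)<d$; it is precisely the angular decay carried by $|1-z_1|$ (captured by the binomial comparison, not by a crude modulus bound) that raises the critical exponent from $d$ to $d+1$.
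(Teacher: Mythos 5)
Your proposal is correct, and it reaches the paper's conclusion by a more hands-on route. The paper's own proof is essentially one line: it invokes Corollary 3.5 of \cite{BKV} (a coefficient-multiplier result for weighted Bergman spaces) to assert that $\sum n^t z_1^n$ lies in $A_{d-1}^p(\mathbb{D})$ if and only if $\sum \frac{\Gamma(n+1+t)}{\Gamma(n+1)\Gamma(t+1)}z_1^n=(1-z_1)^{-(t+1)}$ does, and then settles membership of $(1-z_1)^{-(t+1)}$ by Proposition 1.4.10 of \cite{Rud}; the reduction to the disc via Corollary \ref{la} is common to both arguments. You replace the black-box citation of \cite{BKV} by an explicit asymptotic expansion of $n^t$ in terms of the Gamma ratios $\Gamma(n+t+1-j)/\Gamma(n+1)$, which upgrades the coefficient comparison to a two-sided pointwise estimate $|f(z_1)|\asymp|1-z_1|^{-(t+1)}$ near $z_1=1$, after which the same Rudin-type integral test finishes the proof. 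What your version buys is self-containedness and transparency --- your closing remark correctly identifies that the crude bound $|f(z_1)|\lesssim(1-|z_1|)^{-(t+1)}$ would only give the threshold $p(t+1)<d$, so the angular decay in $|1-z_1|$ is exactly what the multiplier theorem is encoding; what it costs is some bookkeeping you currently gloss over: when $t+1-j$ is a non-positive integer the corresponding term of $R$ is not a pure binomial but a polynomial or a logarithmic term $-\log(1-z_1)$, and you must take the expansion deep enough ($j>t$) that the residual coefficient sequence is absolutely summable. Both issues are routine and do not affect the conclusion $|R(z_1)|=o\bigl(|1-z_1|^{-(t+1)}\bigr)$.
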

 \begin{proof} From Corollary 3.5 of \cite{BKV}, Corollary \ref{la} and Proposition 1.4.10 of \cite{Rud}, it yields that $f(z_1)=\sum_{n=1}^{\infty} n^t z_1^n \in A_{0,1}^p(\mathbb{B}^d) \simeq A_{d-1}^p(\mathbb{D})$ if and only if $\sum_{n=1}^{\infty} \frac{\Gamma(n+1+t)}{\Gamma(n+1)\Gamma(t+1)} z_1^n =\frac{1}{(1-z_1)^{t+1}}\in L_a^p(\mathbb{D},dv_{d-1})$ if and only if $p(t+1)<2+(d-1)=d+1.$
 \end{proof}
 
 {\noindent{\bf{Necessity part of Lemma \ref{lpq}.}}
 We need to prove that  if  $1<p<\frac{d+1}{d+1-\alpha}, \frac{1}{q} < \frac{1}{p}+ \frac{\alpha}{d+1}-1,$ then $K_\alpha: L^p\rightarrow L^q$ is unbounded.  Assume that  $K_\alpha: L^p\rightarrow L^q$ is bounded, it is  equivalent to  $K_\alpha: A_0^p\rightarrow A_0^q$ is bounded.  Then $K_\alpha(A_0^p)\subset A_0^q.$ Choose any $t$ such that \begin{equation}\label{tp} \frac{d+1}{q}+d-\alpha <t<\frac{d+1}{p}-1,\end{equation} denote $f_t(z)=\sum_{n=1}^{\infty}n^tz_1^n ,$ then condition (\ref{tp}) and Lemma \ref{lap} imply that $f_t\in A_0^p.$ Now, assume that  $ K_{\alpha}f_t\in A_0^q.$
Then  \begin{equation}\begin{split}\label{bka} K_{\alpha}f_t(z)&=\sum_{n=1}^\infty\frac{\Gamma(n+\alpha)\Gamma(d+1)}{\Gamma(n+d+1)\Gamma(\alpha)}n^tz_1^n\\
&=\frac{\Gamma(d+1)}{\Gamma(\alpha)\Gamma(d+1-\alpha)}\sum_{n=1}^\infty\frac{\Gamma(n+\alpha)\Gamma(d+1-\alpha)}{\Gamma(n+d+1)}n^tz_1^n \\
&=\frac{\Gamma(d+1)}{\Gamma(\alpha)\Gamma(d+1-\alpha)}\sum_{n=1}^\infty B(n+\alpha,d+1-\alpha)n^tz_1^n\\
&\in A_{0,1}^q(\mathbb{B}^d) \simeq A_{d-1}^q(\mathbb{D}),\\
\end{split}\end{equation}
where $B(\cdot,\cdot)$ is Beta function. On the other hand, by Lemma 3.2 of \cite{BKV}, similar to the prove of Lemma 3.4 of \cite{BKV}, it can be proved that  \begin{equation}\label{nbak}\sum_{n=1}^\infty \frac{n^{\alpha-d-1}}{B(n+\alpha,d+1-\alpha)}a_nz^n\in  A_{d-1}^q(\mathbb{D}),\forall \sum_{n=1}^\infty a_nz^n\in A_{d-1}^q(\mathbb{D}).\end{equation}
Combing (\ref{bka}) with (\ref{nbak}), it  implies that $\sum  n^{\alpha-d-1}n^tz_1^n \in A_{d-1}^q(\mathbb{D})\simeq A_{0,1}^q(\mathbb{B}^d).$ So, by Lemma \ref{lap}, it follows that   $q(\alpha-d+t)<d+1,$ namely $t<\frac{d+1}{q}+d-\alpha,$ a contradiction to the condition $\frac{d+1}{q}+d-\alpha <t.$ It completes the proof. \qed

{\noindent{\bf{Proof of Theorem 2.}} 
First, we prove that (1) is equivalent to (3).  Denote  $D_2$  by the region  determined by the equations in (3) of Theorem 3. It is equivalent to prove that $G(K_\alpha)=D_2.$
Lemma \ref{ld1}, Lemma \ref{kllq}, Lemma \ref{lpq} and the convexity of $G(K_\alpha),$ imply that $ G(K_\alpha) =D_2.$ Lemma \ref{ld1}, Lemma \ref{kllq}, Corollary \ref{KLL} and the convexity of $ G(K_\alpha^+) ,$ imply that $D_2\subset G(K_\alpha^+) .$ From the above we know that $G(K_\alpha^+)\subset G(K_\alpha)=D_2.$  Then $G(K_\alpha^+)=D_2.$ It completes the proof.    \qed

\section{Proofs of Theorem 3 and Theorem 4}
In  the previous Section 4, we have characterized completely the $L^p$-$L^q$ boundedness of $K_\alpha,K_\alpha^+$ under the case of $0<\alpha\leq d+1.$  In the present section, we will characterize completely the $L^p$-$L^q$ compactness of $K_\alpha$ when $0<\alpha\leq d+1.$ It is equivalent to solve the  set $F(K_\alpha),$ where $F(K_\alpha)$ is defined by $$F(K_\alpha)=\{(\frac{1}{p},\frac{1}{q})\in E: K_\alpha:L^{p}\rightarrow L^q~ \text{is ~compact}\}.$$ It is easy to see that $F(K_\alpha)$ is a subset of $G(K_\alpha).$ Theorem 3 in fact shows that $F(K_\alpha)$ and $G(K_\alpha)$ differ only by a segment on the boundary of $G(K_\alpha).$ Thus we always show first that  $K_\alpha$ is compact  on the other part of  on the boundary of $G(K_\alpha).$  In the end of this section, we give the proof of Theorem 4. 
\begin{prop}\label{akpqq} $K_{d+1}:L^{p}\rightarrow L^q$ is compact if and only if $1\leq q<p\leq\infty.$
\end{prop}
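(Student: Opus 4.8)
The plan is to pin down $F(K_{d+1})$ by comparing it against the boundedness diagram $G(K_{d+1})$ already furnished by Theorem 2. Specializing Theorem 2 to $\alpha=d+1$ (so that $\frac{d+1}{d+1-\alpha}=\infty$ and $\frac{\alpha}{d+1}=1$), cases (a) and (d) of $D_2$ become vacuous and cases (b),(c) collapse to $G(K_{d+1})=\{(\frac1p,\frac1q)\in E:\ \frac1q\geq\frac1p,\ \frac1p<1\}$; that is, $K_{d+1}:L^p\to L^q$ is bounded exactly when $1<p\le\infty$ and $q\leq p$. Since compact operators are bounded, $F(K_{d+1})\subset G(K_{d+1})$, and the whole problem reduces to deciding the diagonal line $\frac1q=\frac1p$ (the case $q=p$). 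I will show compactness holds strictly above this diagonal and fails on it, which is exactly $1\le q<p\le\infty$.

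For sufficiency I would first establish compactness along the two edges and then interpolate. On the left edge, $K_{d+1}:L^{\infty}\to L^q$ for $q<\infty$, I would rerun the normal-family argument from the proof of Theorem 1: for a bounded sequence $\{f_n\}\subset L^\infty$, the functions $K_{d+1}f_n$ are holomorphic and locally uniformly bounded, hence a subsequence $K_{d+1}f_{n_j}$ converges locally uniformly to a holomorphic $g$. The only genuinely new point at the critical order $\alpha=d+1$ is the majorant: Proposition 1.4.10 of \cite{Rud} gives $|K_{d+1}f_{n_j}(z)|\le C\int_{\mathbb{B}^d}|1-\langle z,w\rangle|^{-(d+1)}\,dv(w)\le C'\log\frac{1}{1-|z|^2}$, which is \emph{logarithmic} rather than a power of $1-|z|^2$; since $\big(\log\frac{1}{1-|z|^2}\big)^q\in L^1(\mathbb{B}^d)$ for every finite $q$, dominated convergence yields $K_{d+1}f_{n_j}\to g$ in $L^q$. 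Taking adjoints (compactness is preserved under adjoints, as used in Theorem 1) gives compactness of $K_{d+1}:L^p\to L^1$ for every $p>1$, i.e.\ along the top edge. Finally, every interior point $(\frac1p,\frac1q)$ with $0<\frac1p<\frac1q<1$ can be written as a convex combination $\theta(0,\tfrac1{q_2})+(1-\theta)(\tfrac1s,\tfrac1s)$ with $\theta\in(0,1)$, where $(\tfrac1s,\tfrac1s)$ is a bounded diagonal point ($1<s<\infty$, so the bounded index is finite as required) and $(0,\tfrac1{q_2})$ is a compact left-edge point; a short computation shows one can take $s$ close to $1$ so that $q_2\in[1,\infty)$, after which Lemma \ref{cpt} upgrades the combination to compactness. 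The edges themselves are already handled directly, so the whole open region $\{\frac1q>\frac1p\}$ is covered.

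For necessity I must rule out the diagonal $q=p$. Points strictly below it and the line $p=1$ lie outside $G(K_{d+1})$, hence are not even bounded. On the diagonal with $1<p<\infty$ the operator is bounded but not compact: by Proposition \ref{kka}, $K_{d+1}(L^p)=A_0^p$ is an infinite-dimensional closed subspace on which $K_{d+1}$ acts as the identity, and an operator restricting to the identity on an infinite-dimensional subspace cannot be compact. The corner $p=q=\infty$ is excluded the same way, now using $K_{d+1}(L^\infty)=\mathcal{B}_{1}$ from Proposition \ref{kaa}, which is infinite-dimensional, while the corner $p=q=1$ is not bounded. Assembling these facts gives $F(K_{d+1})=\{(\frac1p,\frac1q)\in E:\ \frac1q>\frac1p\}$, equivalently $1\le q<p\le\infty$.

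The step I expect to be the real obstacle is the non-compactness on the diagonal, since it is the one place where boundedness does not settle the matter and one must exploit the idempotent, infinite-rank nature of the Bergman projection; the sufficiency is essentially bookkeeping on the normal-family template of Theorem 1, with the only subtlety being the integrability of the logarithmic majorant that appears precisely at the critical order $\alpha=d+1$.
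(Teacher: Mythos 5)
Your proposal is correct and follows essentially the same route as the paper, which likewise reads off boundedness from Theorem 2, dismisses the diagonal because the Bergman projection acts as the identity on the infinite-dimensional closed subspace $A_0^p$, and obtains compactness for $q<p$ by rerunning the normal-family/dominated-convergence argument of Step 2 of the proof of Theorem 1 followed by adjoints and Lemma \ref{cpt}; your logarithmic majorant at the critical order $\alpha=d+1$ is precisely the detail the paper leaves implicit. One small slip: your collapsed description of $G(K_{d+1})$ wrongly includes the corner $p=q=\infty$ (case (c) of Theorem 2 requires $q<\infty$ there, and indeed $K_{d+1}(L^\infty)=\mathcal{B}_1\not\subset L^\infty$), but since unboundedness already rules out compactness at that corner the final statement is unaffected.
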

\begin{proof} From  Theorem 2, we know that $K_{d+1}:L^{p}\rightarrow L^q$ is bounded if and only if $q\leq p.$ Since $K_{d+1}$ is the standard Bergman projection, it is easy to see $K_{d+1}:L^{p}\rightarrow L^p$ is not compact for any $1<p<\infty.$ Thus it suffices to show that $K_\alpha:L^{p}\rightarrow L^q$ is compact if $q<p.$ Indeed, this can be proved  by the similar method we used in the $step$ 2 of proof of  Theorem 1, we omit it.
\end{proof}

Now, we  recall some results on hypergeometric function theory for later use. For complex numbers $\alpha,\beta,\gamma$ and complex variable $z,$ we use the classical notation $\tensor[_2]{F} {_1}(\alpha,\beta;\gamma;z) $  to denote
$$\tensor[_2]{F} {_1}(\alpha,\beta;\gamma;z) =\sum_{j=0}^{\infty}\frac{(\alpha)_j(\beta)_j}{j!(\gamma)_j}z^j,$$ with $\gamma\neq 0,-1,-2,\ldots,$  where $(\alpha)_j=\Pi_{k=0}^{j-1}(\alpha+k)$ is the Pochhammer for any complex number $\alpha.$ The following lemma is in fact a restatement of Proposition 1.4.10 of \cite{Rud}.
\begin{lem}\cite{Rud}\label{hi1} Suppose $\beta\in\mathbb{R}$ and $\gamma>-1,$ then
$$ \int_{\mathbb{B}^d}\frac{(1-|w|^2)^\gamma}{|1-\langle z,w\rangle|^{2\beta}}dv(w)=\frac{\Gamma(1+d)\Gamma(1+\gamma)}{\Gamma(1+d+\gamma)}\tensor[_2]{F} {_1}(\beta,\beta;1+d+\gamma;|z|^2).$$
\end{lem}
 
 We also need the following lemma.
\begin{lem}\cite[Chapter 2]{EMO} \label{hi2} The following three identities hold. \begin{enumerate}
\item $\tensor[_2]{F} {_1}(\alpha,\beta;\gamma;z) =(1-z)^{\gamma-\alpha-\beta}  \tensor[_2]{F} {_1}(\gamma-\alpha,\gamma-\beta;\gamma;z);$
\item $\tensor[_2]{F} {_1}(\alpha,\beta;\gamma;1)=\frac{\Gamma(\gamma)\Gamma(\gamma-\alpha-\beta)}{\Gamma(\gamma-\alpha)\Gamma(\gamma-\beta)},$ if $Re(\gamma-\alpha-\beta)>0;$
\item $\frac{d}{dz} ~\tensor[_2]{F} {_1}(\alpha,\beta;\gamma;z)=\frac{\alpha\beta}{\gamma}~ \tensor[_2]{F} {_1}(\alpha+1,\beta+1;\gamma+1;z).$
\end{enumerate}\end{lem}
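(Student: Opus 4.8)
The plan is to establish the three identities directly from the power-series definition of $\tensor[_2]{F}{_1}$ together with Euler's integral representation, treating them in increasing order of difficulty: the differentiation formula (3) first, the Gauss summation (2) next, and Euler's transformation (1) last. Since the lemma is quoted from \cite{EMO}, in the paper itself it suffices to cite the reference, but a self-contained argument runs as follows.

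For (3) I would simply differentiate the defining series term by term. Writing $\tensor[_2]{F}{_1}(\alpha,\beta;\gamma;z)=\sum_{j\geq0}\frac{(\alpha)_j(\beta)_j}{j!(\gamma)_j}z^j$, the derivative is $\sum_{j\geq1}\frac{(\alpha)_j(\beta)_j}{(j-1)!(\gamma)_j}z^{j-1}$; reindexing by $k=j-1$ and using the elementary shifts $(\alpha)_{k+1}=\alpha(\alpha+1)_k$, $(\beta)_{k+1}=\beta(\beta+1)_k$, $(\gamma)_{k+1}=\gamma(\gamma+1)_k$ collapses the sum to $\frac{\alpha\beta}{\gamma}\tensor[_2]{F}{_1}(\alpha+1,\beta+1;\gamma+1;z)$. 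This is routine, and term-by-term differentiation is justified on the open unit disc, where the series converges locally uniformly.

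For (2) the natural tool is Euler's integral representation $\tensor[_2]{F}{_1}(\alpha,\beta;\gamma;z)=\frac{\Gamma(\gamma)}{\Gamma(\beta)\Gamma(\gamma-\beta)}\int_0^1 t^{\beta-1}(1-t)^{\gamma-\beta-1}(1-zt)^{-\alpha}\,dt$, valid for $\operatorname{Re}(\gamma)>\operatorname{Re}(\beta)>0$. Setting $z=1$ (legitimate here because $\operatorname{Re}(\gamma-\alpha-\beta)>0$ ensures the resulting integral converges at $t=1$) leaves the Beta integral $\int_0^1 t^{\beta-1}(1-t)^{\gamma-\alpha-\beta-1}\,dt=\frac{\Gamma(\beta)\Gamma(\gamma-\alpha-\beta)}{\Gamma(\gamma-\alpha)}$, and substituting back yields exactly $\frac{\Gamma(\gamma)\Gamma(\gamma-\alpha-\beta)}{\Gamma(\gamma-\alpha)\Gamma(\gamma-\beta)}$. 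The auxiliary restriction $\operatorname{Re}(\gamma)>\operatorname{Re}(\beta)>0$ is then removed by analytic continuation in the parameters.

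I expect the main obstacle to be (1), Euler's transformation. I would prove it by a uniqueness argument: both sides are analytic at $z=0$ with value $1$ there and satisfy the Gauss hypergeometric equation $z(1-z)w''+[\gamma-(\alpha+\beta+1)z]w'-\alpha\beta w=0$, whence they agree near $0$ and then on the whole disc by the identity theorem. The delicate step is verifying that $u(z)=(1-z)^{\gamma-\alpha-\beta}\tensor[_2]{F}{_1}(\gamma-\alpha,\gamma-\beta;\gamma;z)$ solves the equation with parameters $(\alpha,\beta;\gamma)$: substituting $w=(1-z)^{\gamma-\alpha-\beta}v$, computing $w'$ and $w''$, and checking that the equation for $v$ is precisely the hypergeometric equation with parameters $(\gamma-\alpha,\gamma-\beta;\gamma)$ is a somewhat lengthy but mechanical computation. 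A cleaner alternative is to apply Pfaff's transformation $\tensor[_2]{F}{_1}(\alpha,\beta;\gamma;z)=(1-z)^{-\alpha}\tensor[_2]{F}{_1}(\alpha,\gamma-\beta;\gamma;\tfrac{z}{z-1})$ twice: tracking the successive arguments $\tfrac{z}{z-1}\mapsto z$ and the prefactors $(1-z)^{-\alpha}(1-z)^{\gamma-\beta}=(1-z)^{\gamma-\alpha-\beta}$ reproduces (1) after a short bookkeeping, reducing the problem to the single classical Pfaff identity.
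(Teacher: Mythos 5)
Your proposal is correct, but note that the paper does not prove this lemma at all: it is quoted verbatim from \cite[Chapter 2]{EMO} (Erd\'elyi--Magnus--Oberhettinger--Tricomi) and used as a black box, so there is no in-paper argument to compare against. Your three arguments are the standard textbook proofs and all go through: (3) by term-by-term differentiation and the shift $(\alpha)_{k+1}=\alpha(\alpha+1)_k$ is immediate; (2) via Euler's integral representation reduces to a Beta integral, and the passage to $z=1$ is justified by dominated convergence in the integral together with Abel's theorem for the series (worth stating explicitly, since the identity of series and integral is a priori only known for $|z|<1$), after which the auxiliary restriction $\operatorname{Re}(\gamma)>\operatorname{Re}(\beta)>0$ is removed by analytic continuation in the parameters; (1) follows either from the ODE uniqueness argument (where one should note that the space of solutions of the hypergeometric equation analytic at $0$ is one-dimensional precisely because $\gamma$ is excluded from being a nonpositive integer, so that the second exponent $1-\gamma$ does not produce an analytic solution) or, more economically, from applying Pfaff's transformation twice, since $w=\tfrac{z}{z-1}$ satisfies $\tfrac{w}{w-1}=z$ and $1-w=\tfrac{1}{1-z}$, giving the prefactor $(1-z)^{-\alpha}(1-z)^{\gamma-\beta}=(1-z)^{\gamma-\alpha-\beta}$ exactly as you say. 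A third elementary route for (1), closer in spirit to (2), is to multiply the binomial series for $(1-z)^{\gamma-\alpha-\beta}$ against the series for ${}_2F_1(\gamma-\alpha,\gamma-\beta;\gamma;z)$ and match coefficients via the Chu--Vandermonde identity, which is itself the terminating case of your (2). In the context of the paper, citing \cite{EMO} is entirely adequate; your write-up is a sound self-contained substitute.
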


\begin{lem} \label{kcom} If $0<\alpha<d+1,$  then $K_\alpha:L^\infty \rightarrow L^q$ is compact for any  $ 1\leq q\leq\infty.$
\end{lem}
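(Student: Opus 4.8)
The plan is to reduce the whole statement to the single endpoint case $q=\infty$ and to prove that $K_\alpha\colon L^\infty\to L^\infty$ is compact by an Arzel\`a--Ascoli argument on the closed ball $\overline{\mathbb{B}^d}$. Once that is established, the cases $1\le q<\infty$ come for free: since $\mathbb{B}^d$ has finite measure, the inclusion $L^\infty\hookrightarrow L^q$ is bounded, and a compact operator followed by a bounded one is compact, so $K_\alpha\colon L^\infty\to L^q$ is compact for every $1\le q\le\infty$. Thus everything rests on showing that $K_\alpha$ carries bounded subsets of $L^\infty$ into relatively compact subsets of $C(\overline{\mathbb{B}^d})\subset L^\infty$.

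The heart of the matter is a uniform $L^1$-continuity property of the kernel: I would show that the map
$$\overline{\mathbb{B}^d}\ni z\longmapsto k_\alpha(z,\cdot)\in L^1(\mathbb{B}^d,dv)$$
is well defined and continuous, hence uniformly continuous by compactness of $\overline{\mathbb{B}^d}$. That it is well defined with $\sup_{z\in\overline{\mathbb{B}^d}}\|k_\alpha(z,\cdot)\|_{L^1}<\infty$ follows from $\alpha<d+1$ together with Proposition 1.4.10 of \cite{Rud} (equivalently Lemma \ref{hi1} with $\gamma=0$): the integral $\int_{\mathbb{B}^d}|1-\langle z,w\rangle|^{-\alpha}\,dv(w)$ equals $\tensor[_2]{F} {_1}(\tfrac{\alpha}{2},\tfrac{\alpha}{2};d+1;|z|^2)$, which, since $d+1-\alpha>0$, converges at $|z|^2=1$ by Lemma \ref{hi2}(2) and is continuous on $[0,1]$. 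For the continuity into $L^1$ I would fix $z_n\to z_0$ in $\overline{\mathbb{B}^d}$, note that $k_\alpha(z_n,\cdot)\to k_\alpha(z_0,\cdot)$ pointwise on $\mathbb{B}^d$ (here $1-\langle z_0,w\rangle\neq0$ for $w\in\mathbb{B}^d$ by Cauchy--Schwarz), observe that the $L^1$ norms converge by the continuity of the hypergeometric function just established, and then invoke the Riesz--Scheff\'e theorem (pointwise convergence together with convergence of norms forces $L^1$-norm convergence) to conclude $\|k_\alpha(z_n,\cdot)-k_\alpha(z_0,\cdot)\|_{L^1}\to0$.

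With this in hand the Arzel\`a--Ascoli step is routine. For $f\in L^\infty$ the estimate $|K_\alpha f(z)-K_\alpha f(z')|\le\|f\|_\infty\|k_\alpha(z,\cdot)-k_\alpha(z',\cdot)\|_{L^1}$ shows at once that $K_\alpha f$ extends continuously to $\overline{\mathbb{B}^d}$, and that for any bounded sequence $\{f_n\}\subset L^\infty$ the family $\{K_\alpha f_n\}$ is uniformly bounded (by the boundedness part of Theorem 2) and equicontinuous, with a common modulus of continuity controlled by $\omega(\delta)=\sup_{|z-z'|\le\delta}\|k_\alpha(z,\cdot)-k_\alpha(z',\cdot)\|_{L^1}\to0$. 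Arzel\`a--Ascoli then produces a subsequence converging uniformly on $\overline{\mathbb{B}^d}$, i.e. in $L^\infty$, which is exactly compactness of $K_\alpha\colon L^\infty\to L^\infty$.

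The main obstacle is precisely this uniform $L^1$-continuity of $z\mapsto k_\alpha(z,\cdot)$ across the entire closed ball, including boundary points, where the kernel is singular. The restriction $\alpha<d+1$ is used essentially and twice: it forces $k_\alpha(z,\cdot)$ to lie in $L^1$ uniformly in $z$ up to the boundary, and it guarantees the hypergeometric function is finite and continuous at $|z|=1$, which is what allows the Riesz--Scheff\'e argument to run uniformly rather than only on compact subsets of the open ball. Restricting to compacts (as in the normal-families proof of Theorem \ref{thm1}) would yield compactness into $L^q$ only for $q<\infty$ and would miss the endpoint $q=\infty$, so controlling the boundary behaviour uniformly is the crucial extra ingredient.
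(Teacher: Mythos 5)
Your proof is correct, and while the overall skeleton (reduce to $q=\infty$, show $K_\alpha$ maps $L^\infty$ into $C(\overline{\mathbb{B}^d})$, establish uniform boundedness and equicontinuity, apply Arzel\`a--Ascoli) coincides with the paper's, the decisive technical step is handled by a genuinely different device. The paper proves continuity up to the boundary and equicontinuity by hand: it invokes the absolute continuity of the integral to peel off a thin collar $F_\delta$ near $\partial\mathbb{B}^d$, uses uniform convergence of the kernel on the complement, and then splits $\overline{\mathbb{B}^d}$ into an inner compact ball (where the kernel is uniformly continuous) and the collar (where it transports the boundary estimate around by unitary invariance and a triangle inequality), which makes for a two-case argument with three nested $\delta$'s. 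You instead package everything into the single assertion that $z\mapsto k_\alpha(z,\cdot)$ is continuous from $\overline{\mathbb{B}^d}$ into $L^1$, proved via Riesz--Scheff\'e: pointwise convergence of the kernels is immediate since $1-\langle z_0,w\rangle\neq0$ for $w$ in the open ball, and convergence of the $L^1$-norms follows from the identity $\|k_\alpha(z,\cdot)\|_{L^1}={}_2F_1(\tfrac{\alpha}{2},\tfrac{\alpha}{2};d+1;|z|^2)$ together with convergence of the Gauss series at $1$ (this is where $\alpha<d+1$ enters) and Abel continuity on $[0,1]$. Uniform continuity, hence a common modulus of equicontinuity, is then free from compactness of $\overline{\mathbb{B}^d}$. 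Your route is cleaner and more modular --- the $\varepsilon$-bookkeeping collapses into one classical lemma --- at the cost of invoking Riesz--Scheff\'e, which the paper avoids; the paper's argument is more elementary but longer. Both uses of the hypothesis $\alpha<d+1$ that you identify (uniform integrability of the kernel up to the boundary, finiteness of ${}_2F_1$ at $1$) are exactly the ones the paper relies on via Lemma \ref{hi2}(2).
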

\begin{proof} Since the continuous embedding of $L$-integrable spaces, it suffices to prove that $K_\alpha:L^\infty \rightarrow L^\infty$ is compact. We first prove that, for any $f\in L^\infty,$ then $K_\alpha f \in A(\mathbb{B}^d),$ where $A(\mathbb{B}^d)=H(\mathbb{B}^d)\cap C(\overline{\mathbb{B}^d})$ is the ball algebra. For $f\in L^\infty,$ it is clear that $K_\alpha f$ is holomorphic on the ball, i.e. $K_\alpha f \in H(\mathbb{B}^d).$ Now we prove that $K_\alpha f$ is also continuous on the closed ball $\overline{\mathbb{B}^d.}$ From Lemma \ref{hi1} and (2) of Lemma \ref{hi2}, it implies that $K_\alpha f(\eta)$ exists  for any $\eta\in \partial \mathbb{B}^d$ and $$\vert K_\alpha f(\eta)\vert \leq\Vert f\Vert_\infty \frac{\Gamma(d+1)\Gamma(d+1-\alpha)}{\Gamma^2(d+1-\frac{\alpha}{2})}.$$ We now turn to prove that $K_\alpha f$ is continuous on $\partial \mathbb{B}^d.$ It suffices to prove that, for any $\eta\in \partial \mathbb{B}^d$ and for any point sequence $\{ z_n\}$ in $\mathbb{B}^d $ satisfying that  $z_n\rightarrow \eta,$ we have $K_\alpha f(z_n)\rightarrow K_\alpha f(\eta)$ as $n\rightarrow \infty.$ By Lemma \ref{hi1} and (2) of Lemma \ref{hi2} again, we have 

 \begin{equation}\begin{split}\label{kiie}\vert K_\alpha f(z)\vert &\leq \Vert f\Vert_\infty \int_{\mathbb{B}^d}\frac{1}{\vert 1-\langle z,w\rangle\vert^\alpha}dv(w)\\
 &\leq\Vert f\Vert_\infty \int_{\mathbb{B}^d}\frac{1}{\vert 1-\langle \eta,w\rangle\vert^\alpha}dv(w)\\&= \Vert f\Vert_\infty \frac{\Gamma(d+1)\Gamma(d+1-\alpha)}{\Gamma^2(d+1-\frac{\alpha}{2})},\end{split}  \end{equation}  for any $z\in \mathbb{B}^d.$ Due to the absolute continuity of the integral, it implies that, for any $\varepsilon>0,$ there exists $0<\delta<1$, satisfying that \begin{equation}\label{iif}\int_{F}\frac{dv(w)}{\vert 1-\langle \eta,w\rangle\vert^\alpha}\leq \frac{\varepsilon}{4},\end{equation}  whenever $v(F)<\delta.$ Denote $F_\delta=\{z\in \mathbb{B}^d: \sqrt[d]{1-\frac{\delta}{2}}<\vert z\vert<1\}.$ Note that $v(F_\delta)=\frac{\delta}{2}<\delta$  and $$\frac{1}{(1-\langle z_n,w\rangle)^\alpha}\rightarrow \frac{1}{(1-\langle \eta ,w\rangle)^\alpha}~\text{uniformly~on }~\mathbb{B}^d \setminus F_\delta,\text{~as}~n\rightarrow \infty.$$ 
Then there exists $N>0$ such that, for any $n>N,$ $$\int_{\mathbb{B}^d\setminus F_\delta}\left\vert\frac{1}{( 1-\langle z_n,w\rangle)^\alpha}-\frac{1}{( 1-\langle \eta,w\rangle)^\alpha}\right\vert dv(w)\leq \frac{\varepsilon}{2}.$$
Combing this with (\ref{kiie}), (\ref{iif}), it implies that, for any $n>N,$ 
 \begin{equation}\begin{split}\label{keps} \vert K_\alpha f(z_n)-K_\alpha f(\eta)\vert&\leq \Vert f\Vert_\infty\int_{\mathbb{B}^d\setminus F_\delta}\left\vert\frac{1}{(1-\langle z_n,w\rangle)^\alpha}-\frac{1}{(1-\langle \eta,w\rangle)^\alpha}\right\vert dv(w) \\
&~~~~~~~~~+\Vert f\Vert_\infty\int_{F_\delta}\left\vert\frac{1}{(1-\langle z_n,w\rangle)^\alpha}-\frac{1}{( 1-\langle \eta,w\rangle)^\alpha}\right\vert dv(w)\\
&\leq \Vert f\Vert_\infty\int_{\mathbb{B}^d\setminus F_\delta}\left\vert\frac{1}{(1-\langle z_n,w\rangle)^\alpha}-\frac{1}{( 1-\langle \eta,w\rangle)^\alpha}\right\vert dv(w) \\
&~~~~~~~~~+2\Vert f\Vert_\infty\int_{F_\delta}\frac{1}{\vert 1-\langle \eta,w\rangle\vert^\alpha} dv(w)\\
&\leq  \Vert f\Vert_\infty\frac{\varepsilon}{2}+2 \Vert f\Vert_\infty\frac{\varepsilon}{4}\\
&= \varepsilon \Vert f\Vert_\infty.\\
\end{split}  \end{equation} 
It completes the proof of what $K_\alpha f$ is continuous on the closed ball $\overline{\mathbb{B}^d}.$ Now we prove that, for any bounded sequence  in $L^\infty,$ there exists a subsequence satisfying that its image under $K_\alpha $ is convergent in $L^\infty.$ Suppose that $\{f_n\}$ is a bounded sequence in $L^\infty,$ then we have $\{K_\alpha f_n\}$ is in $C(\overline{\mathbb{B}^d})$ and $\{K_\alpha f_n\}$ is uniformly bounded by (\ref{kiie}).  Now we prove that $\{K_\alpha f_n\}$ is also equicontinuous. 
 From (\ref{keps}), we know that \begin{equation}\label{limv}\lim_{\mathbb{B}^d\ni z\rightarrow\eta} \int_{\mathbb{B}^d}\left\vert\frac{1}{(1-\langle z,w\rangle)^\alpha}-\frac{1}{( 1-\langle \eta,w\rangle)^\alpha}\right\vert dv(w)=0,\end{equation} for arbitrary fixed $\eta\in\partial\mathbb{B}^d.$ Combing (\ref{limv}) with the unitary invariance of Lebsgue measure and the symmetry of the unit ball, it implies that,  for any $\epsilon>0,$ there exists $0<\delta'<1,$ satisfying that  \begin{equation}\label{limee}\int_{\mathbb{B}^d}\left\vert\frac{1}{(1-\langle z,w\rangle)^\alpha}-\frac{1}{( 1-\langle \eta,w\rangle)^\alpha}\right\vert dv(w)\leq\frac{\epsilon}{2}\end{equation} whenever $z\in \mathbb{B}^d,\eta\in\partial\mathbb{B}^d$ and $\vert z-\eta\vert<\delta'.$ Denote $B_{1-\frac{\delta'}{2}}=\{z\in \mathbb{C}^d: \vert z\vert \leq1-\frac{\delta'}{2}\}$ and $C_{\frac{\delta'}{2}}=\{z\in \mathbb{C}^d: 1-\frac{\delta'}{2}<\vert z\vert \leq1\}.$ Then the closed ball $\overline{\mathbb{B}^d}$ has the following decomposition,  \begin{equation}\label{dec}\overline{\mathbb{B}^d}=B_{1-\frac{\delta'}{2}}\cup C_{\frac{\delta'}{2}} \text{~and~} B_{1-\frac{\delta'}{2}}\cap C_{\frac{\delta'}{2}} =\emptyset.\end{equation} Since the function  $\frac{1}{(1-\langle z,w\rangle)^\alpha}$  is uniformly continuous on compact set $B_{1-\frac{\delta'}{2}}\times\overline{\mathbb{B}^d},$ then there exists $0<\delta''<1$ such that \begin{equation}\label{limw}\left\vert\frac{1}{(1-\langle z_1,w\rangle)^\alpha}-\frac{1}{(1-\langle z_2,w\rangle)^\alpha}\right\vert\leq\epsilon,\end{equation}     whenever $(z_1,w),(z_2,w)\in B_{1-\frac{\delta'}{2}}\times\overline{\mathbb{B}^d}$ and $\vert z_1-z_2\vert<\delta''.$ Take $\delta'''=\min\{\frac{\delta'}{2},\delta''\}.$ Now we prove that, for any $z_1,z_2\in \overline{\mathbb{B}^d}$ such that $\vert z_1-z_2\vert<\delta''',$ then we have 
 \begin{equation}\label{limepp}\int_{\mathbb{B}^d}\left\vert\frac{1}{(1-\langle z_1,w\rangle)^\alpha}-\frac{1}{( 1-\langle z_2,w\rangle)^\alpha}\right\vert dv(w)\leq\epsilon.\end{equation} 
In fact, there are two cases need to be considered.  The first case is $z_1\in C_{\frac{\delta'}{2}}$ or  $z_2\in C_{\frac{\delta'}{2}}.$ Without loss of generality, we can assume that $z_1\in C_{\frac{\delta'}{2}},$ then there exists 
an $\eta \in \partial \mathbb{B}^d$ satisfying that $\vert z_1-\eta\vert<\delta'''\leq\frac{\delta'}{2}.$ By triangle inequality, it implies that $\vert z_2-\eta\vert\leq\vert z_2-z_1\vert +\vert z_1-\eta\vert<\delta'.$ Together with (\ref{limee}), it implies that \begin{equation}\begin{split}\label{limeef}&\int_{\mathbb{B}^d}\left\vert\frac{1}{(1-\langle z_1,w\rangle)^\alpha}-\frac{1}{( 1-\langle z_2,w\rangle)^\alpha}\right\vert dv(w)\notag\\
&\leq\int_{\mathbb{B}^d}\left\vert\frac{1}{(1-\langle z_1,w\rangle)^\alpha}-\frac{1}{( 1-\langle \eta,w\rangle)^\alpha}\right\vert dv(w)\\&~\vspace{0.2cm}+\int_{\mathbb{B}^d}\left\vert\frac{1}{(1-\langle \eta, w\rangle)^\alpha}-\frac{1}{( 1-\langle z_2,w\rangle)^\alpha}\right\vert dv(w)\\&\leq \epsilon
\end{split}\end{equation} The second case is $z_1,z_2\in B_{1-\frac{\delta'}{2}}.$ By (\ref{limw}), it implies that \begin{equation}\begin{split}\label{limeeff}\int_{\mathbb{B}^d}\left\vert\frac{1}{(1-\langle z_1,w\rangle)^\alpha}-\frac{1}{( 1-\langle z_2,w\rangle)^\alpha}\right\vert dv(w)\leq \epsilon  \int_{\mathbb{B}^d}dv= \epsilon.\notag\end{split}\end{equation}
It proves (\ref{limepp}). Combing with $$\vert K_\alpha f_n(z_1)-K_\alpha f_n(z_2)\vert\leq\Vert f_n\Vert_\infty\int_{\mathbb{B}^d}\left\vert\frac{1}{(1-\langle z_1,w\rangle)^\alpha}-\frac{1}{( 1-\langle z_2,w\rangle)^\alpha}\right\vert dv(w),$$ it implies that $\{K_\alpha f_n\}$ is equicontinuous. Then by Arzel\`a-Ascoli theorem, it implies that $\{K_\alpha f_n\}$  has a convergency subsequence in the supremum norm.
\end{proof}
\begin{cor}\label{kkcom} If $0<\alpha<d+1,$ then the following holds:
\begin{enumerate}
\item $K_\alpha: L^p\rightarrow L^1$ is compact for any $1\leq p\leq \infty.$
\item $K_\alpha: L^1\rightarrow L^q$ is compact if and only if $q<\frac{d+1}{\alpha}.$
\item $K_\alpha: L^p\rightarrow L^\infty$ is compact if and only if $p>\frac{d+1}{d+1-\alpha}.$
\end{enumerate}
\begin{proof} It comes from Lemma \ref{cpt}, Lemma \ref{kcom} and the fact that $K_\alpha $ is adjoint.
\end{proof}
\end{cor}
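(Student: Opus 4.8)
The plan is to derive all three parts from four ingredients already available: the compactness of $K_\alpha$ out of $L^\infty$ (Lemma \ref{kcom}), the interpolation principle for compact operators (Lemma \ref{cpt}), the boundedness thresholds recorded in Lemmas \ref{ld1} and \ref{kllq}, and the fact that $K_\alpha$ is adjoint together with the fact that an operator is compact if and only if its adjoint is compact, exactly as used in Step 2 of the proof of Theorem \ref{thm1}. The necessity directions in (2) and (3) are immediate, since compactness forces boundedness and Lemmas \ref{ld1} and \ref{kllq} identify $q<\frac{d+1}{\alpha}$ and $p>\frac{d+1}{d+1-\alpha}$ as the exact boundedness ranges; so all the work lies in the sufficiency directions and in establishing the full statement (1).

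First I would settle (1). Lemma \ref{kcom} with $q=1$ gives that $K_\alpha:L^\infty\to L^1$ is compact, which is the endpoint $p=\infty$. For $p=1$, I would observe that $K_\alpha:L^1\to L^1$ is compact if and only if its adjoint $K_\alpha:L^\infty\to L^\infty$ is, and the latter is precisely Lemma \ref{kcom} with $q=\infty$. For $1<p<\infty$ I would interpolate via Lemma \ref{cpt}, taking the bounded endpoint $K_\alpha:L^1\to L^1$ (bounded by Theorem 2, with target exponent $1\neq\infty$ as the lemma demands) and the compact endpoint $K_\alpha:L^\infty\to L^1$; the interpolants then sweep out every source $L^p$, $1<p<\infty$, with target fixed at $L^1$. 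This covers $1\le p\le\infty$ and proves (1).

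With (1) in hand, (2) follows quickly. The case $q=1$ is part of (1). For $1<q<\frac{d+1}{\alpha}$ I would pick $q_1$ with $q<q_1<\frac{d+1}{\alpha}$; then $K_\alpha:L^1\to L^{q_1}$ is bounded by Lemma \ref{ld1} (and $q_1\neq\infty$), while $K_\alpha:L^1\to L^1$ is compact by (1). Since $\frac1q$ lies strictly between $\frac1{q_1}$ and $1$, Lemma \ref{cpt} gives compactness of $K_\alpha:L^1\to L^q$. Finally, (3) is the dual of (2): for $\frac{d+1}{d+1-\alpha}<p\le\infty$ set $q=p'$, so that $q<\frac{d+1}{\alpha}$ and $K_\alpha:L^1\to L^q$ is compact by (2); passing to adjoints and invoking that compactness is preserved, together with $(L^q)^*=L^{q'}=L^p$ and $(L^1)^*=L^\infty$, yields that $K_\alpha:L^p\to L^\infty$ is compact.

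The hard part is organizational rather than computational: Lemma \ref{cpt} cannot be applied directly to reach target exponent $\infty$ (it forces the bounded endpoint to have finite target $q_1\neq\infty$), nor to keep the source fixed at $L^1$ while the target varies (an $L^1$ source forces both interpolation endpoints to have source $L^1$). This is why the argument must be staged: one first produces the ``seed'' compactness $K_\alpha:L^1\to L^1$ through the adjoint trick, feeds it into the interpolation for (2), and only then recovers the $L^\infty$-target statement (3) by duality instead of interpolation. Some care is needed at the $L^1$/$L^\infty$ duality, where $(L^\infty)^*\neq L^1$; this is exactly the reason (3) is obtained as the adjoint of (2) and not conversely.
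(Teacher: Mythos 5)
Your argument is correct and uses exactly the ingredients the paper cites for this corollary (Lemma \ref{kcom}, the compact-interpolation Lemma \ref{cpt}, and the self-adjointness of $K_\alpha$ via Schauder's theorem), so it is essentially the paper's intended proof, fully written out. The staging you describe --- seed the compactness at $L^1\to L^1$ by duality, interpolate for (2), and recover (3) by taking adjoints rather than interpolating to an $L^\infty$ target --- is precisely how the cited lemmas must be combined, and your handling of the $q_1\neq\infty$ restriction and of $(L^\infty)^*\neq L^1$ is accurate.
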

In the following, we deal with the  case $1< p, q<\infty$ . However, we need the following result about  Carleson type measures for the Bergman spaces over the unit ball.
\begin{lem}\cite{ZZ}\label{carl} Suppose $1\leq p\leq q<\infty$ and $\mu$ is a positive Borel measure on $\mathbb{B}^d$. Then the following conditions are equivalent:
\begin{enumerate}
\item If $\{f_n\}$ is a bounded sequence in $A_{0}^p$ and $f_n(z) \rightarrow 0$ for every $z \in \mathbb{B}^d$, then
$$\lim_{n\rightarrow\infty}\int_{\mathbb{B}^d}\vert f_n\vert^qd\mu=0.$$
\item For every (or some) $s>0,$ we have $$\lim_{\vert z\vert\rightarrow1^-}\int_{\mathbb{B}^d}\frac{(1-\vert z\vert^2)^s}{\vert 1-\langle z,w\rangle\vert^{s+\frac{q(d+1)}{p}}}d\mu(w)=0.$$
\end{enumerate}
\end{lem}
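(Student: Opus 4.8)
The plan is to prove the two implications separately and to close the loop over the parameter $s$ by establishing $(1)\Rightarrow(2)$ for \emph{every} $s>0$ while assuming only $(2)$ for \emph{some} $s>0$ in the reverse direction; since "(2) for all $s$" trivially implies "(2) for some $s$", this yields the full equivalence together with the asserted independence of $s$. The entire argument rests on a single family of test functions: for $z\in\mathbb{B}^d$ and $s>0$ set
$$f_z(w)=\frac{(1-|z|^2)^{s/q}}{(1-\langle w,z\rangle)^{\frac{s}{q}+\frac{d+1}{p}}},$$
so that $|f_z(w)|^q=\frac{(1-|z|^2)^s}{|1-\langle z,w\rangle|^{s+q(d+1)/p}}$ is exactly the integrand appearing in $(2)$.

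For $(1)\Rightarrow(2)$ I would first check that $\{f_z\}$ is a normalized family in $A_0^p$. A direct application of the Forelli--Rudin estimate (Lemma \ref{hi1}, or Proposition 1.4.10 of \cite{Rud}) gives
$$\|f_z\|_{A_0^p}^p=(1-|z|^2)^{\frac{sp}{q}}\int_{\mathbb{B}^d}\frac{dv(w)}{|1-\langle w,z\rangle|^{(d+1)+\frac{sp}{q}}}\asymp(1-|z|^2)^{\frac{sp}{q}}(1-|z|^2)^{-\frac{sp}{q}}=1,$$
so $\sup_z\|f_z\|_{A_0^p}<\infty$, while $f_z(w)\to0$ for each fixed $w$ as $|z|\to1^-$ (the numerator vanishes and the denominator stays bounded below). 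Applying $(1)$ to $f_{z_n}$ along an arbitrary sequence with $|z_n|\to1$ forces $\int_{\mathbb{B}^d}|f_{z_n}|^qd\mu\to0$, which is precisely the vanishing in $(2)$; as the sequence is arbitrary this holds for every $s>0$.

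For the harder direction $(2)\Rightarrow(1)$ I would discretize. Fix $r>0$ and a lattice $\{a_k\}$ whose Bergman-metric balls $D(a_k,r)$ cover $\mathbb{B}^d$ with bounded overlap. On $D(a_k,r)$ one has $1-|a_k|^2\asymp|1-\langle a_k,w\rangle|$, so evaluating the integral in $(2)$ at $z=a_k$ and restricting to $D(a_k,r)$ controls the discretized Carleson quantities $\widehat\mu(a_k):=\mu(D(a_k,r))/(1-|a_k|^2)^{q(d+1)/p}$; hypothesis $(2)$ then gives $\widehat\mu(a_k)\to0$ as $|a_k|\to1^-$. Combining the holomorphic sub-mean-value inequality $\sup_{D(a_k,r)}|f|^p\lesssim(1-|a_k|^2)^{-(d+1)}\int_{D(a_k,R)}|f|^pdv$ with H\"older (using $q\ge p$) yields the local bound
$$\int_{D(a_k,r)}|f|^qd\mu\lesssim\widehat\mu(a_k)\Big(\int_{D(a_k,R)}|f|^pdv\Big)^{q/p}.$$
Summing over $k$, splitting into a compact core $|a_k|\le\rho$ and a boundary collar $|a_k|>\rho$, and using $\sum_k x_k^{q/p}\le(\sum_k x_k)^{q/p}$ together with bounded overlap, the collar contributes $\lesssim\big(\sup_{|a_k|>\rho}\widehat\mu(a_k)\big)\|f\|_{A_0^p}^q$, uniformly small in $n$ once $\rho$ is near $1$, while the finitely many core terms vanish as $n\to\infty$ because $f_n\to0$ uniformly on compacts. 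Letting first $n\to\infty$ and then $\rho\to1^-$ gives $\int_{\mathbb{B}^d}|f_n|^qd\mu\to0$.

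The main obstacle is this reverse direction: one must set up the bounded-overlap covering and the sub-mean-value/H\"older chain carefully so that the discretized masses $\widehat\mu(a_k)$ are controlled \emph{exactly} by the quantity in $(2)$, and the $\ell^{q/p}\hookrightarrow\ell^1$ step on which the summation rests depends essentially on the standing hypothesis $q\ge p$. The core--collar splitting is precisely what upgrades boundedness of the embedding into the vanishing (compactness) statement in $(1)$.
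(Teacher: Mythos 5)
The paper does not prove this lemma at all --- it is quoted verbatim from \cite{ZZ} (the vanishing Carleson measure characterization for the embedding $A_0^p\hookrightarrow L^q(d\mu)$, $q\geq p$) --- and your argument is precisely the standard proof given there: normalized test functions $f_z$ whose $q$-th power reproduces the integrand in $(2)$ for the forward direction, and a bounded-overlap Bergman-metric lattice with the sub-mean-value estimate, the $\ell^1\hookrightarrow\ell^{q/p}$ inequality, and a core--collar splitting for the reverse. The details check out (including the correct handling of the ``every/some $s$'' quantifier by proving $(1)\Rightarrow(2)$ for all $s$ and using only some $s$ in the converse), so your proposal is a correct, self-contained proof of a statement the paper merely cites.
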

 The Borel measure in Lemma \ref{carl} is in fact  the so-called vanishing Carleson  measures. If denote $A^q(d\mu) $ by the weighted Bergman space $A^q(d\mu)=H(\mathbb{B}^d)\cap L^q(\mathbb{B}^d,d\mu).$ Then (1) of Lemma \ref{carl} guarantees ( or is equivalent to)  that  the embedding $Id: A^p_{0}\rightarrow A^q(d\mu)$ is compact.
  \begin{prop}\label{Acpt} If  $0<\alpha<d+1$ and $1< p\leq q<\infty,$  then the following conditions are equivalent.
\begin{enumerate}
\item $K_\alpha: L^p\rightarrow L^q$ is compact.
\item $K_\alpha: A_0^p\rightarrow A_0^q$ is compact.
\item The embedding $Id: A_0^p\rightarrow A_{q(d+1-\alpha)}^q$ is compact.
\item $\frac{1}{q}> \frac{1}{p}+ \frac{\alpha}{d+1}-1.$
\end{enumerate}
 \end{prop}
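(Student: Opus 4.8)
The plan is to prove the chain of equivalences by first pushing the operator onto holomorphic function spaces, then identifying it with a canonical embedding composed with an isomorphism, and finally translating compactness of that embedding into a vanishing Carleson measure condition. \textbf{From (1) to (2).} I would use Lemma~\ref{3k}, which gives $K_\alpha=K_\alpha K_{d+1}$ on $L^p$ for $1<p<\infty$, together with the facts that the Bergman projection $K_{d+1}\colon L^p\to A_0^p$ is bounded and restricts to the identity on $A_0^p$. If $K_\alpha\colon A_0^p\to A_0^q$ is compact, then $K_\alpha=\iota\circ(K_\alpha|_{A_0^p})\circ K_{d+1}$, with $\iota\colon A_0^q\hookrightarrow L^q$ the inclusion, is compact as a composition of a compact map with bounded ones; conversely, restricting a compact $K_\alpha\colon L^p\to L^q$ to the closed subspace $A_0^p$ and noting that its range lies in $A_0^q$ yields compactness of $K_\alpha\colon A_0^p\to A_0^q$. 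This gives $(1)\Leftrightarrow(2)$.

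\textbf{From (2) to (3).} Next I would exploit Lemma~\ref{kr}, $K_\alpha=R^{0,\alpha-d-1}$ on $A_0^q$, where now $\alpha-d-1<0$, so that $R^{0,\alpha-d-1}$ is a fractional integration operator. Writing $\beta=d+1-\alpha>0$ and invoking the norm equivalence for fractional radial operators that underlies Proposition~\ref{kka} (see \cite{ZZ,Zhu}), namely $\Vert R^{0,-\beta}f\Vert_{A_0^q}\asymp\Vert f\Vert_{A_{q\beta}^q}$ for holomorphic $f$, shows that $K_\alpha\colon A_{q(d+1-\alpha)}^q\to A_0^q$ is a Banach space isomorphism onto $A_0^q$. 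Factoring $K_\alpha|_{A_0^p}$ as the embedding $Id\colon A_0^p\to A_{q(d+1-\alpha)}^q$ followed by this isomorphism, and using that pre- and post-composition with an isomorphism preserves compactness, yields $(2)\Leftrightarrow(3)$.

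\textbf{From (3) to (4).} The embedding in (3) is precisely $A_0^p\hookrightarrow A^q(d\mu)$ for the measure $d\mu=dv_{q(d+1-\alpha)}$, so Lemma~\ref{carl} reduces its compactness to
\[
\lim_{|z|\to1^-}\int_{\mathbb{B}^d}\frac{(1-|z|^2)^s(1-|w|^2)^{q(d+1-\alpha)}}{|1-\langle z,w\rangle|^{\,s+\frac{q(d+1)}{p}}}\,dv(w)=0
\]
for some (hence every) $s>0$. Evaluating the integral by Lemma~\ref{hi1} (equivalently Proposition 1.4.10 of \cite{Rud}) with $\gamma=q(d+1-\alpha)>-1$, the operative regime is $s+\frac{q(d+1)}{p}>d+1+\gamma$, in which the integral is comparable to $(1-|z|^2)^{d+1+\gamma-\frac{q(d+1)}{p}}$. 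The limit vanishes exactly when $d+1+q(d+1-\alpha)>\frac{q(d+1)}{p}$, which after dividing by $q(d+1)$ is precisely $\frac1q>\frac1p+\frac{\alpha}{d+1}-1$; this establishes $(3)\Leftrightarrow(4)$.

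The most delicate point will be the translation in the last step: one must verify that the "some/every $s$" clause of Lemma~\ref{carl} is self-consistent, that is, that whenever (4) fails every admissible $s>0$ forces $s+\frac{q(d+1)}{p}\geq d+1+\gamma$ so that the integral cannot vanish, while whenever (4) holds the limit is $0$ regardless of the regime; and that the critical exponent extracted from Rudin's estimate lands exactly on the line of (4). A secondary care point is the parameter bookkeeping in the second step, ensuring that the negative fractional order $-\beta$ stays within the range where the \cite{ZZ,Zhu} norm equivalence and the invertibility of $R^{0,-\beta}$ are valid, which is guaranteed by $0<\alpha<d+1$.
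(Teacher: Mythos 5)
Your proposal is correct and follows essentially the same route as the paper: $(1)\Leftrightarrow(2)$ via $K_\alpha K_{d+1}=K_\alpha$ and the boundedness of the Bergman projection, $(2)\Leftrightarrow(3)$ by factoring through the fractional radial differential operator isomorphism between $A_0^q$ and $A^q_{q(d+1-\alpha)}$, and $(3)\Leftrightarrow(4)$ via the vanishing Carleson measure criterion evaluated with Rudin's Proposition 1.4.10. The delicate point you flag about the ``some/every $s$'' clause resolves exactly as you anticipate, since when (4) fails every $s>0$ already places the integral in the regime $s+\frac{q(d+1)}{p}>d+1+\gamma$ where the limit is bounded below.
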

\begin{proof} We first prove that (1) is equivalent to (2). Clearly (1) implies  (2). To prove the reverse, note that by $0<\alpha<d+1$ and combing this with Theorem 2 gives us that $K_{d+1}:L^p\rightarrow A_0^p$ is bounded.
Suppose that $\{f_n\}$ is  an arbitrary bounded sequence in $L^p,$ thus we have $\{K_{d+1}f_n\}$ is a bounded sequence in $A_0^p.$ Then the compactness of operator  $K_\alpha: A_0^p\rightarrow A_0^q$ implies that, there exists a subsequence $\{f_{n_j}\}$  such that $\{K_\alpha( K_{d+1}f_{n_j})\}$ is convergent in $A_0^q.$ Combing this with Lemma \ref{3k}, yields that $\{K_\alpha f_{n_j}\}$ is convergent in $A_0^q.$ It proved that (2) implies (1).

{\noindent}Now we prove that (2) is equivalent  to (3). Similar to the proof of  Proposition \ref{kka}, by Theorem 14 of \cite{ZZ} and Theorem 2.19 of \cite{Zhu}, it can be proved that $$R^{\alpha-d-1,d+1-\alpha}:A_0^q \rightarrow A_{q(d+1-\alpha)}^q$$ and its inverse operator are bounded. Note that $K_\alpha=R^{0,\alpha-d-1}$   on  $A_0^p$ and $$R^{\alpha-d-1,d+1-\alpha}R^{0,\alpha-d-1}f=f,\forall f\in A_0^p.$$
 Then we have the following decomposition for embedding $Id,$ \begin{equation}\begin{split}\label{ida} Id:A_0^p&\xrightarrow{K_\alpha=R^{0,\alpha-d-1}} A_0^q \xrightarrow{R^{\alpha-d-1,d+1-\alpha}}A_{q(d+1-\alpha)}^q,\\&Id= R^{\alpha-d-1,d+1-\alpha}K_\alpha .\end{split} \end{equation}
 Combing (\ref{ida}) with the fact that $R^{\alpha-d-1,d+1-\alpha}$ has bounded inverse, it implies that $K_\alpha: A_0^p\rightarrow A_0^q$ is compact if and only if the embedding $Id: A_0^p\rightarrow A_{q(d+1-\alpha)}^q$ is compact.
 
 {\noindent}In the next, we prove that (3) is equivalent to (4). Suppose that $\{ f_n\}$ is an  arbitrary bounded sequence in $A_0^p,$ then by Theorem 20 of \cite{ZZ}, the locally estimation for functions in $A_0^p,$ it implies that 
 $\{ f_n\}$ is a normal family. Hence, by Fatou's lemma, similar to the proof of Theorem 1, there exists a subsequence $\{ f_{n_j}\}$ and $g\in A_0^p$ such that $f_{n_j}$ converges uniformly to $g$ on any compact subset of $\mathbb{B}^d.$ Then $\{ f_{n_j}-g\}$ is in $ A_0^p$ and $f_{n_j}-g\rightarrow 0$ pointwise as $j\rightarrow \infty.$ Together with Lemma \ref{carl}, it yields that the embedding $Id: A_0^p\rightarrow A_{q(d+1-\alpha)}^q$ is compact if and only if  \begin{equation}\label{l0}\lim_{\vert z\vert\rightarrow1^-}\int_{\mathbb{B}^d}\frac{(1-\vert z\vert^2)^s}{\vert 1-\langle z,w\rangle\vert^{s+\frac{q(d+1)}{p}}}dv_{q(d+1-\alpha)}(w)=0,\end{equation} for any $s>0.$ On the other hand, by Proposition 1.4.10 of \cite{Rud}, it implies that (\ref{l0}) is equivalent to $\frac{1}{q}> \frac{1}{p}+ \frac{\alpha}{d+1}-1.$ It completes the proof. \end{proof}
{\noindent{\bf{Proof of Theorem 3.}}  When $\alpha=d+1.$ Theorem 3 degenerates into Proposition \ref{akpqq}.

 {\noindent}Now we turn to the case $0<\alpha<d+1.$ We first prove that (2) implies (1). In fact, it is an immediate  corollary from Lemma \ref{cpt}, Lemma \ref{kcom} and Corollary \ref{kkcom}. To see the reverse, note that by Theorem 2  and combining this with Proposition \ref{Acpt} gives that (1) is not held if (2) is not held, it implies that  (1) implies (2), completing  the proof. \qed
 
 {\noindent{\bf{Proof of Theorem 4.}} By Theorem 1,2,3, it is easy to see that $(1)\Rightarrow (4) \Rightarrow (2) \Leftrightarrow(3).$ Thus  we only need to show that $(2)\Rightarrow (1).$ It is equivalent to prove that $(2)$ is not held if $(1)$ is not held. It suffices to show that $K_\alpha: L^\infty\rightarrow L^1$ is not bounded if $\alpha\geq d+2.$ Suppose that $\alpha\geq d+2.$ In view to Theorem 7.1 of \cite{Zhu}, it implies that $K_\alpha (L^\infty)=\mathcal{B}_{\alpha-d}.$ From Lemma \ref{ba}, we know that $\mathcal{B}_{\alpha-d}\not\subset L^1,$ it means that $K_\alpha: L^\infty\rightarrow L^1$ is not bounded. It completes the proof. \qed

\section{norm estimations for  $K_\alpha.$}

In the previous sections, we have completely characterized  the $L^p$-$L^q$ boundedness of $K_\alpha,K_\alpha^+$ and compactness of $K_\alpha.$ In the present section, we will  state and prove  some sharp norm estimates of $K_\alpha,K_\alpha^+,$ which  gives essentially the upper bounds of the best constants in the  Hardy-Littlewood-Sobolev inequalities. 
\begin{prop}\label{ktion}If $d+1<\alpha<d+2$ and $K_\alpha: L^p\rightarrow L^q$ is bounded, then
\begin{equation}\label{nom} \Vert K_\alpha\Vert_{ L^p\rightarrow L^q}\leq\frac{\Gamma(d+1)^{1+\frac{1}{q}-\frac{1}{p}}\Gamma(\alpha-(d+1))\Gamma(\frac{1}{q^{-1}-p^{-1}}(d+1-\alpha)+1)^{\frac{1}{q}-\frac{1}{p}}}{\Gamma(\frac{\alpha}{2})^2\Gamma(\frac{1}{q^{-1}-p^{-1}}(d+1-\alpha)+d+1)^{\frac{1}{q}-\frac{1}{p}}}.\end{equation}\end{prop}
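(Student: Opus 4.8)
The plan is to reduce the whole estimate to a single endpoint norm and then spread it across exponents by interpolation. Set $t:=\frac1q-\frac1p$. The first observation is that the right-hand side $M$ of (\ref{nom}) depends on $p,q$ only through $t$ (since $\frac{1}{q^{-1}-p^{-1}}=\frac1t$ and the exponents are $1+t$ and $t$), and that whenever $K_\alpha:L^p\to L^q$ is bounded, i.e. $(\frac1p,\frac1q)$ lies in the triangular boundedness region of Theorem 1, one has $t\in(\alpha-(d+1),1]$. By the pointwise domination $\vert K_\alpha f\vert\le K_\alpha^+(\vert f\vert)$ it suffices to bound $\Vert K_\alpha^+\Vert_{L^p\to L^q}$, and I would aim to show that this norm is $\le M$ for \emph{every} admissible pair with the given $t$, by computing it at the corner $p=\infty$, $q=1/t$ and propagating.

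The core step is the estimate at $(p,q)=(\infty,1/t)$. For $f\in L^\infty$ one has $\vert K_\alpha^+f(z)\vert\le\Vert f\Vert_\infty\,\Phi(z)$ with $\Phi(z)=\int_{\mathbb{B}^d}\vert1-\langle z,w\rangle\vert^{-\alpha}\,dv(w)$. By Lemma \ref{hi1} (with $\gamma=0$) and the Euler transformation (1) of Lemma \ref{hi2} I would write
\[\Phi(z)=(1-\vert z\vert^2)^{d+1-\alpha}\,\tensor[_2]{F}{_1}\!\left(d+1-\tfrac\alpha2,\,d+1-\tfrac\alpha2;\,d+1;\,\vert z\vert^2\right).\]
Since $\alpha<d+2<2(d+1)$, all coefficients of this series are positive, so it is increasing in $\vert z\vert^2$ and bounded above by its value at $1$, which by (2) of Lemma \ref{hi2} equals $\frac{\Gamma(d+1)\Gamma(\alpha-(d+1))}{\Gamma(\alpha/2)^2}$ (the argument $d+1-2(d+1-\frac\alpha2)=\alpha-(d+1)$ is positive). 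Hence
\[\Phi(z)\le\frac{\Gamma(d+1)\Gamma(\alpha-(d+1))}{\Gamma(\alpha/2)^2}\,(1-\vert z\vert^2)^{-(\alpha-(d+1))}.\]
Raising to the power $1/t$ and integrating, using $\int_{\mathbb{B}^d}(1-\vert z\vert^2)^c\,dv=\frac{\Gamma(d+1)\Gamma(c+1)}{\Gamma(d+1+c)}$ (Lemma \ref{hi1} with $\beta=0$; the exponent $c=-(\alpha-(d+1))/t>-1$ precisely because $t>\alpha-(d+1)$), yields $\Vert K_\alpha^+\Vert_{L^\infty\to L^{1/t}}\le\Vert\Phi\Vert_{L^{1/t}}=M$, matching the right-hand side of (\ref{nom}) exactly.

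Next I would use that the kernel $k_\alpha^+(z,w)=\vert1-\langle z,w\rangle\vert^{-\alpha}$ is symmetric in $z,w$, so $K_\alpha^+$ is self-adjoint and therefore $\Vert K_\alpha^+\Vert_{L^{1/(1-t)}\to L^1}=\Vert K_\alpha^+\Vert_{L^\infty\to L^{1/t}}\le M$. The two exponent pairs $(\infty,1/t)$ and $(1/(1-t),1)$ are precisely the endpoints of the segment $\{(\frac1p,\frac1q):\frac1q-\frac1p=t\}$ inside the boundedness region, and any admissible $(\frac1p,\frac1q)$ with $\frac1q-\frac1p=t$ lies on this segment. Applying the interpolation Lemma \ref{int} with the appropriate $\theta\in[0,1]$ then gives $\Vert K_\alpha^+\Vert_{L^p\to L^q}\le M^{\theta}M^{1-\theta}=M$, and combined with $\Vert K_\alpha\Vert_{L^p\to L^q}\le\Vert K_\alpha^+\Vert_{L^p\to L^q}$ this is exactly (\ref{nom}).

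The main obstacle is the endpoint estimate: one must recognize that the stated bound is, up to the monotone majorization of the hypergeometric factor, exactly the $L^\infty\to L^{1/t}$ operator norm, and extract both the sharp boundary blow-up rate $(1-\vert z\vert^2)^{-(\alpha-(d+1))}$ and the sharp constant $\Gamma(d+1)\Gamma(\alpha-(d+1))/\Gamma(\alpha/2)^2$ from Lemmas \ref{hi1} and \ref{hi2}. The Euler transformation (1) of Lemma \ref{hi2} is indispensable here, since the direct series $\tensor[_2]{F}{_1}(\frac\alpha2,\frac\alpha2;d+1;\cdot)$ diverges at $1$ when $\alpha>d+1$, so one cannot read off the boundary behavior of $\Phi$ without first transforming it into a convergent one. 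By contrast, the self-adjoint reduction of the second endpoint and the final interpolation are routine.
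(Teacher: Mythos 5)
Your proposal is correct and follows essentially the same route as the paper: the paper likewise reduces to the endpoint $\Vert K_\alpha^+\Vert_{L^\infty\to L^{1/(q^{-1}-p^{-1})}}$ via the Euler transformation and Gauss evaluation of the hypergeometric factor (its display (\ref{nf})), and then uses self-adjointness to identify the two endpoint norms and Lemma \ref{int} to interpolate along the segment $\frac{1}{q}-\frac{1}{p}=\mathrm{const}$. The only cosmetic slip is writing $\Vert\Phi\Vert_{L^{1/t}}=M$ where the majorization only gives $\Vert\Phi\Vert_{L^{1/t}}\leq M$, which does not affect the argument.
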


\begin{lem} Suppose that $d
+1<\alpha<d+2$ and $(0,\frac{1}{q})\in G(K_\alpha)=G(K_\alpha^+),$ then the following holds. \begin{enumerate}
\item $ \Vert K_\alpha\Vert_{L^{\infty}\rightarrow L^q}\leq\Vert K_\alpha^+\Vert_{L^{\infty}\rightarrow L^q}=\Vert \int_{\mathbb{B}^d} k_\alpha^+(\cdot,w)dv(w)\Vert_{L^q}.$ 
\item In particular, when $d=1,$ \begin{equation}\label{eme} \Vert K_\alpha\Vert_{L^{\infty}\rightarrow L^1}\leq\Vert K_\alpha^+\Vert_{L^{\infty}\rightarrow L^1}=\frac{4}{(\alpha-2)^2}\left(\frac{\Gamma(3-\alpha)}{\Gamma^2(2-\frac{\alpha}{2})}-1\right).\end{equation}
\item For any general $(0,\frac{1}{q})\in G(K_\alpha)=G(K_\alpha^+),$ \begin{small} \begin{equation}\label{nm} \Vert K_\alpha\Vert_{L^{\infty}\rightarrow L^q}\leq\Vert K_\alpha^+\Vert_{L^{\infty}\rightarrow L^q}\leq \frac{\Gamma(d+1)^{1+\frac{1}{q}}\Gamma(\alpha-(d+1))\Gamma(q(d+1-\alpha)+1)^{\frac{1}{q}}}{\Gamma(\frac{\alpha}{2})^2\Gamma(q(d+1-\alpha)+d+1)^\frac{1}{q}}. \end{equation}\end{small}
\end{enumerate}
\end{lem}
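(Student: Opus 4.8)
The plan is to reduce the whole lemma to the single extremal input $f\equiv 1$ and then to a hypergeometric computation. First I would settle part (1). Because the kernel $k_\alpha^+(z,w)=|1-\langle z,w\rangle|^{-\alpha}$ is nonnegative, for any $f$ with $\Vert f\Vert_\infty\le 1$ one has the pointwise domination
$$|K_\alpha^+f(z)|\le\int_{\mathbb{B}^d}k_\alpha^+(z,w)|f(w)|\,dv(w)\le\int_{\mathbb{B}^d}k_\alpha^+(z,w)\,dv(w)=K_\alpha^+1(z).$$
Taking $L^q$ norms and the supremum over the unit ball of $L^\infty$ gives $\Vert K_\alpha^+\Vert_{L^\infty\to L^q}\le\Vert K_\alpha^+1\Vert_{L^q}$, while testing on $f\equiv1$ gives the reverse inequality, whence equality. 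The first inequality $\Vert K_\alpha\Vert_{L^\infty\to L^q}\le\Vert K_\alpha^+\Vert_{L^\infty\to L^q}$ is immediate from $|K_\alpha f|\le K_\alpha^+(|f|)$ noted in the introduction. So the lemma reduces entirely to evaluating or estimating $\Vert K_\alpha^+1\Vert_{L^q}$.

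Next I would compute $K_\alpha^+1$ explicitly. By Lemma \ref{hi1} with $\gamma=0$ and $2\beta=\alpha$,
$$K_\alpha^+1(z)=\int_{\mathbb{B}^d}\frac{dv(w)}{|1-\langle z,w\rangle|^{\alpha}}={}_2F_1\Big(\tfrac{\alpha}{2},\tfrac{\alpha}{2};d+1;|z|^2\Big).$$
For part (3) I would apply the Euler transformation (1) of Lemma \ref{hi2} to isolate the boundary singularity:
$${}_2F_1\Big(\tfrac{\alpha}{2},\tfrac{\alpha}{2};d+1;t\Big)=(1-t)^{d+1-\alpha}\,{}_2F_1\Big(d+1-\tfrac{\alpha}{2},d+1-\tfrac{\alpha}{2};d+1;t\Big).$$
Since $d+1<\alpha<d+2$ forces $d+1-\frac{\alpha}{2}>0$, the transformed series has nonnegative Taylor coefficients, hence is nondecreasing on $[0,1)$ and bounded by its value at $t=1$, which by the Gauss formula (2) of Lemma \ref{hi2} (valid because $\alpha-(d+1)>0$) equals $\frac{\Gamma(d+1)\Gamma(\alpha-(d+1))}{\Gamma(\alpha/2)^2}$. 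This yields the pointwise bound
$$K_\alpha^+1(z)\le\frac{\Gamma(d+1)\Gamma(\alpha-(d+1))}{\Gamma(\alpha/2)^2}\,(1-|z|^2)^{d+1-\alpha}.$$
Taking the $L^q$ norm and using $\int_{\mathbb{B}^d}(1-|z|^2)^{q(d+1-\alpha)}\,dv=\frac{\Gamma(d+1)\Gamma(q(d+1-\alpha)+1)}{\Gamma(q(d+1-\alpha)+d+1)}$, which is finite precisely when $q(d+1-\alpha)>-1$, i.e. exactly when $(0,\frac1q)\in G(K_\alpha)$, produces the constant in (\ref{nm}) after combining the Gamma factors.

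Finally, for part (2) I would compute the $d=1$, $q=1$ norm exactly rather than merely bound it. Here $K_\alpha^+1(z)={}_2F_1(\frac{\alpha}{2},\frac{\alpha}{2};2;|z|^2)$, and reducing the disk integral in polar coordinates gives $\Vert K_\alpha^+1\Vert_{L^1}=\int_0^1{}_2F_1(\frac{\alpha}{2},\frac{\alpha}{2};2;t)\,dt$. I would integrate using the derivative identity (3) of Lemma \ref{hi2} read backwards, namely $\frac{d}{dt}{}_2F_1(\frac{\alpha}{2}-1,\frac{\alpha}{2}-1;1;t)=(\frac{\alpha}{2}-1)^2\,{}_2F_1(\frac{\alpha}{2},\frac{\alpha}{2};2;t)$, so that the antiderivative is explicit. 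Evaluating at the endpoints, with value $1$ at $t=0$ and the Gauss value $\frac{\Gamma(3-\alpha)}{\Gamma^2(2-\alpha/2)}$ at $t=1$ (justified since $3-\alpha>0$ for $2<\alpha<3$), and simplifying $(\frac{\alpha}{2}-1)^{-2}=\frac{4}{(\alpha-2)^2}$, gives (\ref{eme}).

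I expect the main obstacle to be the sharp estimate in part (3): one must recognize that the Euler transformation extracts the correct singular weight $(1-|z|^2)^{d+1-\alpha}$, and then justify that the transformed hypergeometric factor attains its maximum at the boundary. The positivity of its Taylor coefficients (which relies on $\alpha<d+2$) together with the convergence of the Gauss series at $t=1$ (which relies on $\alpha>d+1$) is exactly what makes the endpoint constant explicit and the bound clean; the special value $\alpha=1$ mentioned elsewhere would enter only as a removable-singularity limit and does not affect this range.
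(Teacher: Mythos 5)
Your proposal is correct and follows essentially the same route as the paper: reduce to the extremal input $f\equiv 1$, identify $K_\alpha^+1$ as ${}_2F_1(\frac{\alpha}{2},\frac{\alpha}{2};d+1;|z|^2)$ via Lemma \ref{hi1}, use the Euler transformation and Gauss summation for part (3), and the derivative identity for the $d=1$ computation in part (2). The only difference is that you spell out the monotonicity of the transformed hypergeometric factor via nonnegative Taylor coefficients, which the paper leaves implicit.
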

\begin{proof} (1) Since $|K_\alpha(f)|\leq K_\alpha^+(|f|),$ it  implies   that $\Vert K_\alpha\Vert_{L^{\infty}\rightarrow L^q}\leq\Vert K_\alpha^+\Vert_{L^{\infty}\rightarrow L^q}$ if $K_\alpha$ and $K_\alpha^+ $ are bounded. Note that  $\vert K_\alpha^+f\vert (z)\leq \Vert f\Vert_\infty \int_{\mathbb{B}^d}\frac{1}{\vert 1-\langle z,w\rangle\vert^{\alpha}}dv(w) $ for any $f\in L^\infty,$ it yields  that $$\Vert K_\alpha\Vert_{L^{\infty}\rightarrow L^q}\leq \Vert K_\alpha^+\Vert_{L^{\infty}\rightarrow L^q}\leq\Vert \int_{\mathbb{B}^d} k_\alpha^+(\cdot,w)dv(w)\Vert_{L^q}.$$ 
To see the reverse, note  that  $$\Vert K_\alpha^+\Vert_{L^{\infty}\rightarrow L^q}\geq \Vert K_\alpha^+1\Vert_{ L^q}=\Vert \int_{\mathbb{B}^d} k_\alpha^+(\cdot,w)dv(w)\Vert_{L^q}.$$ It leads to the desired result.
 
{\noindent}(2) Now we turn  to calculate the norm in the case of $d=1.$ 
 From  (2) of Lemma \ref{hi2} and what we have proved, it follows that 
\begin{equation}\begin{split}
 \Vert K_\alpha\Vert_{L^{\infty}\rightarrow L^1}&\leq\Vert K_\alpha^+\Vert_{L^{\infty}\rightarrow L^1}\\
 &= \int_{\mathbb{B}^d}\tensor[_2]{F} {_1}(\frac{\alpha}{2},\frac{\alpha}{2};d+1;\vert z\vert^2)dv(z)\\
 &=d\int_0^1\tensor[_2]{F} {_1}(\frac{\alpha}{2},\frac{\alpha}{2};d+1;r)r^{d-1}dr,\\
\end{split}\end{equation}
in the last equality we apply the integration in polar coordinates, see Lemma 1.8 of \cite{Zhu}, and the unitary invariance of hypergeometric function $\tensor[_2]{F} {_1}(\frac{\alpha}{2},\frac{\alpha}{2};d+1;\vert z\vert^2).$ Now we use the differential properties listed in Lemma \ref{hi2} to calculate the integral in the case of $d=1.$ We observe (3) of Lemma \ref{hi2}, it gives that 
\begin{equation}
\frac{d}{dr}\left( \tensor[_2]{F} {_1}(\frac{\alpha}{2}-1,\frac{\alpha}{2}-1;1;r)\right)=(\frac{\alpha}{2} -1)^2\tensor[_2]{F} {_1}(\frac{\alpha}{2},\frac{\alpha}{2};2;r).\notag\end{equation}
Then integrate the two sides of the above equality and we get
\begin{equation}\int_0^1\tensor[_2]{F} {_1}(\frac{\alpha}{2},\frac{\alpha}{2};2;r)dr=\frac{4}{(\alpha-2)^2}\left( \tensor[_2]{F} {_1}(\frac{\alpha}{2}-1,\frac{\alpha}{2}-1;1;1)-1\right).\notag\end{equation}
Together with with (2) of Lemma \ref{hi2} yields the desired result. 

{\noindent}(3) Combing   (1) with Lemma \ref{hi1} and (1),(2) of Lemma \ref{hi2}, it follows that
\begin{equation}\begin{split} \label{nf}\Vert K_\alpha^+\Vert_{L^{\infty}\rightarrow L^q}&=\left( \int_{\mathbb{B}^d}\left(\int_{\mathbb{B}^d}\frac{1}{\vert 1-\langle z,w\rangle\vert^{\alpha}}dv(w)\right)^{q}dv(z) \right)^\frac{1}{q}\notag\\
&= \left( \int_{\mathbb{B}^d} \tensor[_2]{F} {_1}(\frac{\alpha}{2},\frac{\alpha}{2};d+1;\vert z\vert^2)^qdv(z)\right)^\frac{1}{q}\\
&=\left(\int_{\mathbb{B}^d} (1-\vert z \vert^2)^{q(d+1-\alpha)}\tensor[_2]{F} {_1}(d+1-\frac{\alpha}{2},d+1-\frac{\alpha}{2};d+1;\vert z\vert^2)^qdv(z)\right)^\frac{1}{q}\\
&\leq \tensor[_2]{F} {_1}(d+1-\frac{\alpha}{2},d+1-\frac{\alpha}{2};d+1;1)\left(\int_{\mathbb{B}^d} (1-\vert z \vert^2)^{q(d+1-\alpha)}dv(z)\right)^\frac{1}{q}\\
&=   \frac{\Gamma(d+1)^{1+\frac{1}{q}}\Gamma(\alpha-(d+1))\Gamma(q(d+1-\alpha)+1)^{\frac{1}{q}}}{\Gamma(\frac{\alpha}{2})^2\Gamma(q(d+1-\alpha)+d+1)^\frac{1}{q}}.\\
\end{split}\end{equation}
It leads to (\ref{nm}).
\end{proof}

{\noindent{\bf{Proof of Proposition \ref{ktion}.}} Suppose that  $K_\alpha^+:L^p\rightarrow L^q$ is bounded, it is equivalent to $(\frac{1}{p},\frac{1}{q})\in G(K_\alpha^+).$ Then (3) of Theorem 1 guarantees 
$\frac{1}{q}-\frac{1}{p}>\alpha-(d+1).$ Note by (3) of Theorem 1 again yields that 
\begin{equation}\label{oqp} (0, \frac{1}{q}-\frac{1}{p}),(1-(\frac{1}{q}-\frac{1}{p}),1)\in G(K_\alpha^+)\end{equation} and there exists $0\leq \theta\leq 1$ satisfying that \begin{equation}\label{thet}(\frac{1}{p},\frac{1}{q})=\theta\cdot(0, \frac{1}{q}-\frac{1}{p})+(1-\theta)\cdot(1-(\frac{1}{q}-\frac{1}{p}),1).\end{equation}
Combing (\ref{oqp}),(\ref{thet}) with Lemma \ref{int}, it follows that 

\begin{equation}\label{nmpq}\Vert K_\alpha^+\Vert_{L^p\rightarrow L^q} \leq \Vert K_\alpha^+\Vert_{L^\infty\rightarrow L^{\frac{1}{q^{-1}-p^{-1}}}}^\theta \Vert K_\alpha^+\Vert_{L^{\frac{1}{1-(q^{-1}-p^{-1})}}\rightarrow L^1}^{1-\theta}
\end{equation}
We observe that   the adjoint  operator of $K_\alpha^+:L^\infty\rightarrow L^{\frac{1}{q^{-1}-p^{-1}}}$ is exactly the operator $K_\alpha^+:L^{\frac{1}{1-(q^{-1}-p^{-1})}}\rightarrow L^1,$ it means that $$ \Vert K_\alpha^+\Vert_{L^\infty\rightarrow L^{\frac{1}{q^{-1}-p^{-1}}}}= \Vert K_\alpha^+\Vert_{L^{\frac{1}{1-(q^{-1}-p^{-1})}}\rightarrow L^1}.$$
Applying this to (\ref{nmpq}), then yields that  \begin{equation}\label{nmpq1} \Vert K_\alpha^+\Vert_{L^p\rightarrow L^q} \leq \Vert K_\alpha^+\Vert_{L^\infty\rightarrow L^{\frac{1}{q^{-1}-p^{-1}}}}.\end{equation}
Combing (\ref{nmpq1}) with (\ref{oqp}) and applying Lemma \ref{nm}, it leads to the desired conclusion.\qed
\begin{cor} Suppose that $C_1$ is the best constant in HLS 1, then 
$$C_1\leq \frac{\Gamma(d+1)^{2-\frac{1}{s}-\frac{1}{p}}\Gamma(\alpha-(d+1))\Gamma(\frac{1}{1-s^{-1}-p^{-1}}(d+1-\alpha)+1)^{1-\frac{1}{s}-\frac{1}{p}}}{\Gamma(\frac{\alpha}{2})^2\Gamma(\frac{1}{1-s^{-1}-p^{-1}}(d+1-\alpha)+d+1)^{1-\frac{1}{s}-\frac{1}{p}}}.$$
\end{cor}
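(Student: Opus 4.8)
The plan is to rewrite the double integral in \textup{HLS 1} as a dual pairing with the operator $K_\alpha^+$ and then feed the operator norm estimate of Proposition \ref{ktion} into it. First I would apply Fubini's theorem to rewrite the left-hand side of (\ref{hls}) as
\[
\int_{\mathbb{B}^d}\int_{\mathbb{B}^d}\frac{f(w)g(z)}{|1-\langle z,w\rangle|^\alpha}\,dv(w)\,dv(z)=\int_{\mathbb{B}^d}(K_\alpha^+f)(z)\,g(z)\,dv(z),
\]
so that the quantity controlled in \textup{HLS 1} is exactly the pairing $\langle K_\alpha^+ f,g\rangle$.

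Next, writing $s'$ for the conjugate exponent of $s$, I would estimate by H\"older's inequality and the $L^{s'}$--$L^s$ duality
\[
\left\vert\langle K_\alpha^+ f,g\rangle\right\vert\leq \Vert K_\alpha^+ f\Vert_{L^{s'}}\Vert g\Vert_{L^s}\leq \Vert K_\alpha^+\Vert_{L^p\rightarrow L^{s'}}\Vert f\Vert_{L^p}\Vert g\Vert_{L^s}.
\]
Since $C_1$ is by definition the smallest admissible constant in (\ref{hls}), this immediately yields $C_1\leq \Vert K_\alpha^+\Vert_{L^p\rightarrow L^{s'}}$. Before invoking the norm bound I would verify that this operator norm is finite: the hypothesis $\frac{1}{s}+\frac{1}{p}+\alpha<d+2$ is equivalent to $\frac{1}{s'}>\frac{1}{p}+\alpha-(d+1)$ (and forces $p>\frac{1}{d+2-\alpha}$), which by Theorem 1 is precisely the condition placing $(\frac{1}{p},\frac{1}{s'})$ in $G(K_\alpha^+)$, so $K_\alpha^+:L^p\rightarrow L^{s'}$ is bounded.

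Finally I would apply Proposition \ref{ktion} with $q=s'$. Although that proposition is phrased for $K_\alpha$, its proof in fact establishes the identical upper bound for $\Vert K_\alpha^+\Vert_{L^p\rightarrow L^q}$, via inequality (\ref{nmpq1}) together with the $L^\infty\rightarrow L^q$ estimate (\ref{nm}). Substituting $\frac{1}{q}=\frac{1}{s'}=1-\frac{1}{s}$ into (\ref{nom}) converts the exponents $1+\frac{1}{q}-\frac{1}{p}$ and $\frac{1}{q}-\frac{1}{p}$ into $2-\frac{1}{s}-\frac{1}{p}$ and $1-\frac{1}{s}-\frac{1}{p}$, and the factor $\frac{1}{q^{-1}-p^{-1}}$ into $\frac{1}{1-s^{-1}-p^{-1}}$, reproducing exactly the asserted bound on $C_1$. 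The argument is essentially bookkeeping; the only step genuinely needing care is the translation between the \textup{HLS 1} hypotheses and membership of $(\frac{1}{p},\frac{1}{s'})$ in $G(K_\alpha^+)$, which I expect to be the main (and only modest) obstacle.
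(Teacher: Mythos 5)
Your proposal is correct and is exactly the argument the paper intends (the corollary is stated without proof as an immediate consequence of Proposition \ref{ktion}): duality reduces $C_1$ to $\Vert K_\alpha^+\Vert_{L^p\rightarrow L^{s'}}$, the HLS~1 hypotheses translate precisely into $(\frac{1}{p},\frac{1}{s'})\in G(K_\alpha^+)$, and the substitution $\frac{1}{q}=1-\frac{1}{s}$ in (\ref{nom}) gives the stated bound. Your observation that the proof of Proposition \ref{ktion} actually bounds $\Vert K_\alpha^+\Vert_{L^p\rightarrow L^q}$ (not merely $\Vert K_\alpha\Vert_{L^p\rightarrow L^q}$) is the right way to close the only potential gap.
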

\begin{prop}\label{alno}   If $0<\alpha<d+1$ and  $\frac{1}{p}-(1- \frac{\alpha}{d+1})<\frac{1}{q}\leq\frac{1}{p},$ then \begin{equation}\label{iek}\Vert K_\alpha\Vert_{L^p\rightarrow L^q} \leq\Vert K_\alpha^+\Vert_{L^p\rightarrow L^q} \leq \left (\frac{\Gamma(d+1)\Gamma(d+1-\frac{\alpha}{1-(p^{-1}-q^{-1})})}{\Gamma^2(d+1-\frac{\alpha}{2(1-(p^{-1}-q^{-1}))})}\right)^{1-(\frac{1}{p}-\frac{1}{q})}.\end{equation} In particular, when $q=\infty,$ the inequality (\ref{iek}) is an equality.
\end{prop}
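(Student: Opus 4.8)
The plan is to mirror the argument of Proposition \ref{ktion}, but using the ``lower corner'' of the type diagram appropriate to the range $0<\alpha<d+1$ instead of the ``upper corner'' used there. The first inequality $\Vert K_\alpha\Vert_{L^p\to L^q}\le\Vert K_\alpha^+\Vert_{L^p\to L^q}$ is immediate from the pointwise domination $|K_\alpha f|\le K_\alpha^+(|f|)$, so the whole task reduces to bounding $\Vert K_\alpha^+\Vert_{L^p\to L^q}$. Writing $s=\frac1p-\frac1q$, the hypothesis becomes $0\le s<1-\frac{\alpha}{d+1}$, and the point $(\frac1p,\frac1q)$ lies on the segment $\{\frac1p-\frac1q=s\}$ whose endpoints in $E$ are $(s,0)$ and $(1,1-s)$, corresponding to $K_\alpha^+\colon L^{1/s}\to L^\infty$ and $K_\alpha^+\colon L^1\to L^{1/(1-s)}$. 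First I would verify that both endpoints lie in $G(K_\alpha^+)$: membership of $(s,0)$ is Lemma \ref{kllq}, whose condition $1/s>\frac{d+1}{d+1-\alpha}$ is exactly $s<1-\frac{\alpha}{d+1}$, while membership of $(1,1-s)$ is Lemma \ref{ld1}, whose condition $\frac{1}{1-s}<\frac{d+1}{\alpha}$ is again $s<1-\frac{\alpha}{d+1}$; both rely on the strictness of the hypothesis.

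Next I would apply the complex interpolation of Lemma \ref{int} along this segment. The kernel $|1-\langle z,w\rangle|^{-\alpha}$ is symmetric, so $K_\alpha^+$ is self-adjoint and the two endpoint norms coincide: by the formulas behind Lemmas \ref{ld1} and \ref{kllq} both equal $\sup_z\Vert k_\alpha^+(z,\cdot)\Vert_{L^{1/(1-s)}}$. Hence interpolation collapses to the single estimate
\begin{equation}\label{plancol}\Vert K_\alpha^+\Vert_{L^p\to L^q}\le\Vert K_\alpha^+\Vert_{L^{1/s}\to L^\infty},\end{equation}
exactly as in Proposition \ref{ktion}.

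It then remains to evaluate the right-hand side of (\ref{plancol}) in closed form. Here I would use the exact identity $\Vert K_\alpha^+\Vert_{L^{a}\to L^\infty}=\sup_z\Vert k_\alpha^+(z,\cdot)\Vert_{L^{a'}}$ from the proof of Lemma \ref{kllq} with $a=1/s$, so the outer exponent is $1/a'=1-s$ and the integrand carries the power $\alpha a'=\frac{\alpha}{1-s}$. Lemma \ref{hi1} rewrites the supremum as $\sup_z\tensor[_2]{F}{_1}(\frac{\alpha}{2(1-s)},\frac{\alpha}{2(1-s)};d+1;|z|^2)$, and since this series has positive coefficients it is increasing in $|z|^2$, so the supremum is the boundary value at $|z|=1$. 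The hypothesis $s<1-\frac{\alpha}{d+1}$ guarantees $d+1-\frac{\alpha}{1-s}>0$, which is precisely the convergence condition of Lemma \ref{hi2}(2); this gives the value $\frac{\Gamma(d+1)\Gamma(d+1-\frac{\alpha}{1-s})}{\Gamma(d+1-\frac{\alpha}{2(1-s)})^2}$, and raising to the power $1-s$ and substituting $s=\frac1p-\frac1q$ yields (\ref{iek}).

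For the equality claim when $q=\infty$, note that then $s=\frac1p$ and $(\frac1p,\frac1q)=(s,0)$ is the endpoint itself, so (\ref{plancol}) is an equality and the computation above is exact; moreover $\Vert K_\alpha\Vert_{L^p\to L^\infty}=\sup_z\Vert k_\alpha(z,\cdot)\Vert_{L^{p'}}=\sup_z\Vert k_\alpha^+(z,\cdot)\Vert_{L^{p'}}=\Vert K_\alpha^+\Vert_{L^p\to L^\infty}$ because $|k_\alpha|=k_\alpha^+$, so both inequalities in (\ref{iek}) become equalities. The only genuinely delicate step is the reduction (\ref{plancol}): one must confirm that the two endpoints are boundary points of $G(K_\alpha^+)$ lying strictly inside the admissible strip and that the symmetry of the kernel really identifies their norms, after which the hypergeometric evaluation is routine bookkeeping.
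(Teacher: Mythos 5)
Your proposal is correct and follows essentially the same route as the paper's own proof: interpolate along the segment joining the two mutually adjoint endpoints $L^{1/s}\to L^\infty$ and $L^1\to L^{1/(1-s)}$ with $s=\tfrac1p-\tfrac1q$, use the coincidence of their norms to collapse the interpolation bound to $\Vert K_\alpha^+\Vert_{L^{1/s}\to L^\infty}$, and evaluate that norm exactly via Lemma \ref{hi1} and Lemma \ref{hi2}(2). Your observation that the hypergeometric series has positive coefficients, so the supremum over $z$ is the boundary value at $|z|=1$, is a welcome justification of the equality the paper asserts in its analogue of this step.
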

\begin{proof} We first prove that  (\ref{iek}) is in fact equality under the case of $q=\infty.$ From (\ref{eqk}), we know that  \begin{equation}\label{eqkk}\Vert K_\alpha\Vert_{ L^p\rightarrow L^\infty} = \Vert K_\alpha^+\Vert_{ L^p\rightarrow L^\infty} =\sup_{z\in\mathbb{B}^d}\left(\int \frac{dv(w)}{|1-\langle z,w\rangle|^{\frac{p\alpha}{p-1}}}\right)^{\frac{p-1}{p}}.\end{equation} 
On the other hand, Lemma \ref{hi1} and (2) of Lemma \ref{hi2} yield 
 \begin{equation}\begin{split}\label{est} \int_{\mathbb{B}^d} \frac{dv(w)}{|1-\langle z,w\rangle|^{\frac{p\alpha}{p-1}}}&= \tensor[_2]{F} {_1}(\frac{p\alpha}{2(p-1)},\frac{p\alpha}{2(p-1)};d+1;\vert z\vert^2)\\  
 &\leq \tensor[_2]{F} {_1}(\frac{p\alpha}{2(p-1)},\frac{p\alpha}{2(p-1)};d+1;1)\\
 &=\frac{\Gamma(d+1)\Gamma(d+1-\frac{p\alpha}{p-1})}{\Gamma^2(d+1-\frac{p\alpha}{2(p-1)})}.\end{split}  \end{equation}  Combing (\ref{eqkk}) and (\ref{est}), it implies that \begin{equation}\label{iiek}\Vert K_\alpha\Vert_{L^p\rightarrow L^\infty} =\Vert K_\alpha^+\Vert_{L^p\rightarrow L^\infty} = \left (\frac{\Gamma(d+1)\Gamma(d+1-\frac{p\alpha}{p-1})}{\Gamma^2(d+1-\frac{p\alpha}{2(p-1)})}\right)^{\frac{p-1}{p}}.\end{equation}
Now we turn to prove (\ref{iek}) in the general case. Note first that  $|K_\alpha(f)|\leq K_\alpha^+(|f|),$ it  implies   that  $\Vert K_\alpha\Vert_{L^p\rightarrow L^q}\leq\Vert K_\alpha^+\Vert_{L^p\rightarrow L^q}$ if $K_\alpha$ and $K_\alpha^+ $ are bounded. Since   $\frac{1}{p}-(1- \frac{\alpha}{d+1})<\frac{1}{q}\leq\frac{1}{p},$ Theorem 2 implies that \begin{equation}\label{pqta}(\frac{1}{p},\frac{1}{q}),(\frac{1}{p}-\frac{1}{q},0),(1,1-(\frac{1}{p}-\frac{1}{q}))\in G(K_\alpha^+)\end{equation} and there exists $0\leq\theta\leq1$ satisfying that \begin{equation}\label{pqtt} (\frac{1}{p},\frac{1}{q})=\theta\cdot(\frac{1}{p}-\frac{1}{q},0)+(1-\theta)\cdot(1,1-(\frac{1}{p}-\frac{1}{q}))\end{equation} Combing (\ref{pqta}),(\ref{pqtt}) with Lemma \ref{int}, it follows that 
\begin{equation}\label{knor}\Vert K_\alpha^+\Vert_{L^p\rightarrow L^q} \leq \Vert K_\alpha^+\Vert_{L^{\frac{1}{p^{-1}-q^{-1}}}\rightarrow L^\infty}^\theta \Vert K_\alpha^+\Vert_{L^1\rightarrow L^{\frac{1}{1-(p^{-1}-q^{-1})}}}^{1-\theta}
\end{equation}
We observe that   the adjoint  operator of $K_\alpha^+:L^{\frac{1}{p^{-1}-q^{-1}}}\rightarrow L^\infty$ is exactly the operator $K_\alpha^+:L^1\rightarrow L^{\frac{1}{1-(p^{-1}-q^{-1})}},$ it means that \begin{equation}\label{keq}\Vert K_\alpha^+\Vert_{L^{\frac{1}{p^{-1}-q^{-1}}}\rightarrow L^\infty}= \Vert K_\alpha^+\Vert_{L^1\rightarrow L^{\frac{1}{1-(p^{-1}-q^{-1})}}}.\end{equation} Thus by (\ref{knor}) and (\ref{keq}), it follows that $$\Vert K_\alpha^+\Vert_{L^p\rightarrow L^q} \leq \Vert K_\alpha^+\Vert_{L^{\frac{1}{p^{-1}-q^{-1}}}\rightarrow L^\infty}.$$
Together with (\ref{iiek}), it completes the proof.\end{proof}
\begin{cor} Suppose that $C_2$ is the best constant in HLS 2, then the following holds.
\begin{enumerate}
\item If $\frac{1}{p}<1-\frac{1}{s},$ then $$C_2\leq\frac{\Gamma(d+1)\Gamma(d+1-\alpha)}{\Gamma^2(d+1-\frac{\alpha}{2})}.$$
\item If  $\frac{1}{p}-(1- \frac{\alpha}{d+1})<1-\frac{1}{s}\leq\frac{1}{p},$ then $$ C_2 \leq \left (\frac{\Gamma(d+1)\Gamma(d+1-\frac{\alpha}{2-p^{-1}-s^{-1}})}{\Gamma^2(d+1-\frac{\alpha}{2(2-p^{-1}-s^{-1})})}\right)^{2-(\frac{1}{p}-\frac{1}{s})}.$$
\end{enumerate}
\end{cor}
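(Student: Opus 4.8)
The plan is to reduce the best constant $C_2$ to an $L^p$-$L^{s'}$ operator norm of $K_\alpha^+$ and then read off the two bounds from Proposition \ref{alno} and from the endpoint norms of $K_\alpha^+$. First I would recognize the double integral in \eqref{hls} as the bilinear pairing of $K_\alpha^+$: for $f\in L^p$ and $g\in L^s$,
\[
\int_{\mathbb{B}^d}\int_{\mathbb{B}^d}\frac{f(w)g(z)}{|1-\langle z,w\rangle|^\alpha}\,dv(w)\,dv(z)=\int_{\mathbb{B}^d}(K_\alpha^+f)(z)\,g(z)\,dv(z).
\]
By the duality $(L^s)^*=L^{s'}$, $\frac1s+\frac1{s'}=1$, taking the supremum over $\|g\|_{L^s}\le1$ gives $\|K_\alpha^+f\|_{L^{s'}}$, and a further supremum over $\|f\|_{L^p}\le1$ identifies the best constant as an operator norm, $C_2=\|K_\alpha^+\|_{L^p\to L^{s'}}$. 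Thus, writing $q=s'$ so that $\frac1q=1-\frac1s$, the statement reduces to estimating $\|K_\alpha^+\|_{L^p\to L^q}$, and the two cases are precisely the regimes $\frac1q>\frac1p$ (Case 1) and $\frac1p-(1-\frac{\alpha}{d+1})<\frac1q\le\frac1p$ (Case 2). Throughout I assume $0<\alpha<d+1$, so the Gamma expressions are finite.

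For Case 2 the hypothesis $\frac1p-(1-\frac{\alpha}{d+1})<1-\frac1s\le\frac1p$ becomes, after $\frac1q=1-\frac1s$, exactly the hypothesis of Proposition \ref{alno}, which I would simply invoke. The remaining work is bookkeeping: since $p^{-1}-q^{-1}=\frac1p+\frac1s-1$, one has $1-(p^{-1}-q^{-1})=2-p^{-1}-s^{-1}$, so the arguments $\tfrac{\alpha}{1-(p^{-1}-q^{-1})}$ and $\tfrac{\alpha}{2(1-(p^{-1}-q^{-1}))}$ inside the Gamma factors become $\tfrac{\alpha}{2-p^{-1}-s^{-1}}$ and $\tfrac{\alpha}{2(2-p^{-1}-s^{-1})}$, while the outer exponent $1-(\tfrac1p-\tfrac1q)$ equals $2-\tfrac1p-\tfrac1s$, giving the claimed bound for $C_2$.

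For Case 1 one has $\frac1q>\frac1p$, i.e. $q<p$, so Proposition \ref{alno} does not apply; but the situation is simpler because $dv$ is a probability measure on $\mathbb{B}^d$. The embedding $L^p\subset L^q$ with $\|h\|_{L^q}\le\|h\|_{L^p}$ gives $\|K_\alpha^+\|_{L^p\to L^q}\le\|K_\alpha^+\|_{L^p\to L^p}$, so it suffices to bound the diagonal norm. Since the kernel $k_\alpha^+(z,w)=|1-\langle z,w\rangle|^{-\alpha}$ is symmetric in $z$ and $w$, the operator $K_\alpha^+$ is self-adjoint, and by Lemma \ref{hi1} with $(2)$ of Lemma \ref{hi2},
\[
\|K_\alpha^+\|_{L^\infty\to L^\infty}=\|K_\alpha^+\|_{L^1\to L^1}=\sup_{z\in\mathbb{B}^d}\int_{\mathbb{B}^d}\frac{dv(w)}{|1-\langle z,w\rangle|^\alpha}=\tensor[_2]{F} {_1}\!\left(\tfrac\alpha2,\tfrac\alpha2;d+1;1\right)=\frac{\Gamma(d+1)\Gamma(d+1-\alpha)}{\Gamma^2(d+1-\frac\alpha2)}=:M.
\]
Interpolating between these two endpoints via Lemma \ref{int} (equivalently, Schur's test with the symmetric kernel) yields $\|K_\alpha^+\|_{L^p\to L^p}\le M$ for every $1\le p\le\infty$, hence $C_2\le M$, as claimed.

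No single step is a serious obstacle, since the corollary is an assembly of Proposition \ref{alno} with the endpoint computations. The point requiring the most care is the duality reduction: one must correctly pass from the symmetric bilinear form to $K_\alpha^+:L^p\to L^{s'}$ and keep track that it is the conjugate exponent $s'$ (not $s$) that plays the role of the target index $q$. This is what produces the combination $2-p^{-1}-s^{-1}$ in Case 2 and what forces the split into $q<p$ and $q\ge p$. I would also flag that the outer exponent in Case 2 simplifies to $2-\frac1p-\frac1s$, which appears to differ by a sign from the exponent as currently typeset in the statement.
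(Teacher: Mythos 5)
Your proof is correct and follows essentially the route the paper intends: identify $C_2=\Vert K_\alpha^+\Vert_{L^p\to L^{s'}}$ by duality and then invoke Proposition \ref{alno}, with Case 1 reduced to the diagonal bound $\Vert K_\alpha^+\Vert_{L^p\to L^p}\le {}_2F_1(\tfrac{\alpha}{2},\tfrac{\alpha}{2};d+1;1)$ via the embedding $L^p\subset L^{s'}$ on the probability space (which is exactly the $q=p$ instance of that proposition, and your endpoint interpolation between $L^1\to L^1$ and $L^\infty\to L^\infty$ is the same computation). Your observation that the outer exponent in Case 2 should read $2-\frac{1}{p}-\frac{1}{s}$ rather than $2-(\frac{1}{p}-\frac{1}{s})$ is also correct; that is a sign typo in the stated corollary.
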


{\noindent{\bf{Proof of Theorem 5.}} When $\alpha<\frac{d+2}{2},$ by Proposition 1.4.10 of \cite{Rud}, it implies that the kernel function $k_{\alpha}^+\in L^2(\mathbb{B}^d\times\mathbb{B}^d,dv\times dv),$ thus $K_\alpha,K_\alpha^+:L^2\rightarrow L^2$ are Hilbert-Schmidt. Note that \begin{equation}\label{tra}Tr(K_\alpha^*K_\alpha)=\int_{\mathbb{B}^d}\int_{\mathbb{B}^d}\frac{1}{\vert 1-\langle z, w\rangle\vert^{2\alpha}}dv(w)dv(z).\end{equation} When $\alpha\neq1,$ similar to  (\ref{eme}), yields the trace formula. Now we deal with the spacial case $\alpha=1.$ Combing Lemma \ref{hi1} with (\ref{tra}), it implies that  $$Tr(K_1^*K_1)=\int_0^1\tensor[_2]{F} {_1}(1,1;2;r)dr =\sum_{j=1}^\infty \frac{1}{j^2}=\frac{\pi^2}{6}.$$ \qed
\begin{rem} By (3) of Proposition \ref{hi2} and inductive method, we can get explicit trace formulas for every dimension $d\geq 1.$
\end{rem}
 As a  consequence of Theorem 5 we obtain the following generalized Euler-Jacobi identity.
 \begin{cor} Suppose that $0<\alpha<\frac{3}{2},$ then \begin{equation}\label{Eul}\sum_{j=0}^\infty \left(\frac{\Gamma(\alpha+j)}{\Gamma(\alpha)\Gamma(2+j)}\right)^2=\frac{1}{(\alpha-1)^2}\left (\frac{\Gamma(3-2\alpha)}{\Gamma^2(2-\alpha)} -1\right ).\end{equation}
 \end{cor}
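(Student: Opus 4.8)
The plan is to funnel both parts of the statement through the single double integral $J:=\int_{\mathbb{B}^d}\int_{\mathbb{B}^d}|1-\langle z,w\rangle|^{-2\alpha}\,dv(w)\,dv(z)$, and then to evaluate $J$ in closed form when $d=1$. For part (1) I would invoke the standard criterion that an integral operator on $L^2(\mathbb{B}^d,dv)$ is Hilbert--Schmidt exactly when its kernel is square-integrable on $\mathbb{B}^d\times\mathbb{B}^d$, the squared Hilbert--Schmidt norm being the integral of the squared modulus of the kernel. Since $K_\alpha$ and $K_\alpha^+$ have kernels of the same modulus $|1-\langle z,w\rangle|^{-\alpha}$, both are Hilbert--Schmidt if and only if $J<\infty$. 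By Lemma \ref{hi1} (with $\gamma=0$, $\beta=\alpha$) the inner integral equals $\tensor[_2]{F}{_1}(\alpha,\alpha;d+1;|z|^2)$, whose boundary behaviour is governed by the sign of $d+1-2\alpha$: applying transformation (1) of Lemma \ref{hi2} when $2\alpha>d+1$, it is $O\bigl((1-|z|^2)^{d+1-2\alpha}\bigr)$, with an extra logarithm in the borderline case. Hence $J<\infty$ precisely when $d+1-2\alpha>-1$, i.e.\ when $\alpha<\frac{d+2}{2}$, giving part (1).

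For part (2) I would first record the two equalities. Because $|k_\alpha(z,w)|^2=|1-\langle z,w\rangle|^{-2\alpha}=k_{2\alpha}^+(z,w)$, the Hilbert--Schmidt identity yields $Tr(K_\alpha^*K_\alpha)=\Vert K_\alpha\Vert_{HS}^2=J=\int_{\mathbb{B}^d}K_{2\alpha}^+1\,dv=\Vert K_{2\alpha}^+1\Vert_{L^1}$, the last step using $K_{2\alpha}^+1\ge0$. On the other hand, the pointwise bound $|K_{2\alpha}^+f|\le\Vert f\Vert_\infty\,K_{2\alpha}^+1$ gives $\Vert K_{2\alpha}^+\Vert_{L^\infty\to L^1}\le\Vert K_{2\alpha}^+1\Vert_{L^1}$, while testing on $f\equiv1$ gives the reverse inequality; hence $Tr(K_\alpha^*K_\alpha)=\Vert K_{2\alpha}^+\Vert_{L^\infty\to L^1}$. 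This identification is valid for all $0<\alpha<\frac{3}{2}$, since then $2\alpha<3=d+2$ keeps $J$ finite by part (1).

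It then remains to compute $J$ explicitly for $d=1$. Using Lemma \ref{hi1} once more and integrating in polar coordinates (Lemma 1.8 of \cite{Zhu}), the radial symmetry reduces the computation to $\int_0^1\tensor[_2]{F}{_1}(\alpha,\alpha;2;r)\,dr$. The decisive manipulation is the derivative identity (3) of Lemma \ref{hi2}, which rewrites the integrand as $\frac{1}{(\alpha-1)^2}\frac{d}{dr}\tensor[_2]{F}{_1}(\alpha-1,\alpha-1;1;r)$; integrating, using $\tensor[_2]{F}{_1}(\cdots;0)=1$ and the Gauss value (2) of Lemma \ref{hi2} at $r=1$ produces $\frac{1}{(\alpha-1)^2}\bigl(\frac{\Gamma(3-2\alpha)}{\Gamma^2(2-\alpha)}-1\bigr)$. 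Equivalently, this is precisely the substitution $\alpha\mapsto2\alpha$ in the already-established formula (\ref{eme}). For the excluded value $\alpha=1$ the expression is a removable $0/0$; here I would compute directly, noting $\tensor[_2]{F}{_1}(1,1;2;r)=\sum_{j\ge0}\frac{r^j}{j+1}$ so that $\int_0^1\tensor[_2]{F}{_1}(1,1;2;r)\,dr=\sum_{j\ge0}\frac{1}{(j+1)^2}=\frac{\pi^2}{6}$, consistent with the limit of the general formula. I expect the main obstacle to be this one-dimensional hypergeometric integral — specifically the justification of the endpoint evaluation at $r=1$, which is exactly where the hypothesis $\alpha<\frac{3}{2}$ is forced, being the convergence condition $\mathrm{Re}(\gamma-a-b)>0$ in Gauss's theorem — together with the separate bookkeeping required to resolve the removable singularity at $\alpha=1$.
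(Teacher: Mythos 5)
Your proposal is, in substance, a proof of Theorem 5 rather than of this Corollary: you establish that $J:=\int_{\mathbb{B}^1}\int_{\mathbb{B}^1}|1-\langle z,w\rangle|^{-2\alpha}\,dv(w)\,dv(z)=Tr(K_\alpha^*K_\alpha)$ equals the closed form $\frac{1}{(\alpha-1)^2}\bigl(\frac{\Gamma(3-2\alpha)}{\Gamma^2(2-\alpha)}-1\bigr)$, and that computation (Lemma \ref{hi1}, polar coordinates, the derivative identity (3) of Lemma \ref{hi2}, Gauss's evaluation at $r=1$, which is indeed where $\alpha<\frac{3}{2}$ enters) matches the paper's and is correct. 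But the statement to be proved is an identity between that closed form and the series $\sum_{j\ge0}\bigl(\frac{\Gamma(\alpha+j)}{\Gamma(\alpha)\Gamma(2+j)}\bigr)^2$, and nowhere do you show that $J$ equals this series; the left-hand side of the Corollary never appears in your argument except incidentally at $\alpha=1$, where you expand ${}_2F_1(1,1;2;r)$ by hand. That is a genuine gap: you have evaluated one side of the identity twice and the other side not at all.

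The missing step can be supplied in either of two standard ways, both implicit in the paper. Spectrally: by Lemma \ref{3k}, $K_\alpha=K_\alpha K_{2}$ annihilates $(A_0^2)^\perp$, and by (\ref{KR}) with $d=1$ it acts on the orthonormal monomial basis $\{e_j\}$ of $A_0^2(\mathbb{D})$ as multiplication by $\lambda_j=\frac{\Gamma(2)\Gamma(\alpha+j)}{\Gamma(\alpha)\Gamma(2+j)}$, so $Tr(K_\alpha^*K_\alpha)=\sum_j\Vert K_\alpha e_j\Vert^2=\sum_j\lambda_j^2$, which is exactly the left-hand side. Alternatively, expand the kernel directly: $|1-z\bar w|^{-2\alpha}=\sum_{m,n}c_mc_n\,z^m\bar w^m\bar z^n w^n$ with $c_n=\frac{\Gamma(\alpha+n)}{\Gamma(\alpha)\,n!}\ge0$, and integrate term by term (Tonelli) using $\int_{\mathbb{D}}w^n\bar w^m\,dv=\delta_{mn}/(n+1)$, giving $J=\sum_n c_n^2/(n+1)^2=\sum_n\bigl(\frac{\Gamma(\alpha+n)}{\Gamma(\alpha)\Gamma(n+2)}\bigr)^2$. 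Either observation, added to what you have, completes the proof; without one of them the Corollary is not established.
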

When $\alpha=1,$ the identity (\ref{Eul}) is the well known Euler-Jacobi identity $$\sum_{j=1}^\infty \frac{1}{j^2}=\frac{\pi^2}{6}.$$
When $d=1, 0<\alpha<\frac{3}{2},$ we know that $K_\alpha: L^2\rightarrow L^2$ is compact by Theorem 1 or  Theorem 5. Thus the spectrum $\sigma(K_\alpha)$ of the operator $K_\alpha$ is exactly the point spectrum. Note that every $K_\alpha$ is adjoint, then combing (\ref{KR}) and (\ref{Eul}) with Stirling's formula, we have the following.
 \begin{cor} Suppose that $d=1$ and $0<\alpha<\frac{3}{2},$ then $K_\alpha: L^2\rightarrow L^2$ is compact and   $$\sigma(K_\alpha)=\bigcup_{j=0}^\infty\{\frac{\Gamma(\alpha+j)}{\Gamma(\alpha)\Gamma(2+j)}\}.$$ Moreover, in this case, $$\Vert K_\alpha\Vert_{L^2\rightarrow L^2}=\max_{0\leq j\leq \infty} \frac{\Gamma(\alpha+j)}{\Gamma(\alpha)\Gamma(2+j)}.$$\end{cor}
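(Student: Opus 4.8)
The plan is to exhibit an explicit complete orthogonal system of eigenvectors for $K_\alpha$ on $L^2(\mathbb{B}^1)=L^2(\mathbb{D})$ and then read off both the spectrum and the norm from it. First, compactness is immediate: since $d=1$ and $0<\alpha<\frac32=\frac{d+2}{2}$, part (1) of Theorem 5 (equivalently Theorem 1) already gives that $K_\alpha:L^2\rightarrow L^2$ is Hilbert--Schmidt, hence compact. Second, I would record that $K_\alpha$ is self-adjoint on $L^2$: writing $k_\alpha(z,w)=(1-\langle z,w\rangle)^{-\alpha}$ and using $\langle w,z\rangle=\overline{\langle z,w\rangle}$ together with $\alpha\in\mathbb{R}$, one checks $\overline{k_\alpha(w,z)}=k_\alpha(z,w)$, so the integral kernel is Hermitian and $K_\alpha^*=K_\alpha$. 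This is precisely the Fubini adjointness already invoked in Proposition \ref{cs}.

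Next I would produce the eigenvectors and locate the zero part of the spectrum. Because $k_\alpha(z,w)$ is holomorphic in $z$, the range of $K_\alpha$ lies in the Bergman space $A_0^2$, and by Lemma \ref{3k} one has $K_\alpha K_{d+1}=K_\alpha$, where $K_{d+1}=P$ is the Bergman projection. Hence $K_\alpha$ annihilates $\ker P=(A_0^2)^\perp$. On $A_0^2$ the monomials $\{z^n\}_{n\geq0}$ form a complete orthogonal system, and formula (\ref{KR}) with $d=1$ (so that $\Gamma(d+1)=\Gamma(2)=1$) gives $K_\alpha z^n=\lambda_n z^n$ with $\lambda_n=\frac{\Gamma(\alpha+n)}{\Gamma(\alpha)\Gamma(2+n)}$. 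Thus $L^2=\overline{\mathrm{span}}\{z^n\}\oplus(A_0^2)^\perp$ is an orthogonal decomposition into eigenspaces of the compact self-adjoint operator $K_\alpha$, with eigenvalue $\lambda_n$ on $z^n$ and eigenvalue $0$ on $(A_0^2)^\perp$.

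Since this eigensystem is complete, the spectral theorem for compact self-adjoint operators shows that the nonzero spectrum of $K_\alpha$ is exactly $\{\lambda_n:n\geq0\}$. Stirling's formula gives $\lambda_n\sim n^{\alpha-2}/\Gamma(\alpha)\rightarrow0$ (using $\alpha<\frac32<2$), so the eigenvalues accumulate only at $0$, and $0\in\sigma(K_\alpha)$ is forced in any case by compactness. This yields $\sigma(K_\alpha)=\bigcup_{j\geq0}\{\lambda_j\}$ together with its unique limit point $0$. Finally, for a self-adjoint operator the norm equals the spectral radius, so $\Vert K_\alpha\Vert_{L^2\rightarrow L^2}=\sup_{\lambda\in\sigma(K_\alpha)}|\lambda|=\max_{j\geq0}\lambda_j$; the maximum is attained because the $\lambda_j$ are strictly positive and tend to $0$, so only finitely many exceed any positive threshold.

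The argument is essentially a bookkeeping exercise once the three ingredients---Hilbert--Schmidt compactness, the Hermitian kernel, and the diagonal action (\ref{KR}) on monomials---are assembled. The one point deserving care, which I would flag as the main (if modest) obstacle, is the completeness of the eigenbasis: namely, that $K_\alpha$ genuinely vanishes on all of $(A_0^2)^\perp$ rather than merely on a dense subspace. This is supplied exactly by the operator identity $K_\alpha P=K_\alpha$ of Lemma \ref{3k}, so no real difficulty remains beyond invoking it correctly to conclude that $\{z^n\}$ exhausts the nonzero spectral behaviour.
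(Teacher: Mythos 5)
Your proposal is correct and takes essentially the same route as the paper, which likewise deduces compactness from the Hilbert--Schmidt bound of Theorem 5, invokes self-adjointness of $K_\alpha$, reads the eigenvalues off the diagonal action (\ref{KR}) on monomials, and uses Stirling's formula; your explicit appeal to Lemma \ref{3k} to show that $K_\alpha$ vanishes on $(A_0^2)^\perp$ merely fills in a completeness step the paper leaves implicit. The only cosmetic points are that the parenthetical citation of Theorem 1 is inapt here (it concerns $d+1<\alpha<d+2$) and that $0$ lies in $\sigma(K_\alpha)$ as the accumulation point of the $\lambda_j$, both of which you effectively acknowledge.
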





{\noindent{\bf{Acknowledgements.}}  
The first author would  like to thank Professor G. Zhang for his helpful discussions and  warm hospitality  when the author visited Chalmers University of  Technology.

\bibliographystyle{plain}
 
\end{document}